\newtheorem{theorem}{Theorem}[section]
\newtheorem{lemma}[theorem]{Lemma}
\newtheorem{proposition}[theorem]{Proposition}
\newtheorem{corollary}[theorem]{Corollary}
\theoremstyle{definition}
\newtheorem{definition}[theorem]{Definition}
\newtheorem{example}[theorem]{Example}
{\bf }
\theoremstyle{remark}
\newtheorem{remark}[theorem]{Remark}
\numberwithin{equation}{section}
\newcommand{\abs}[1]{\lvert#1\rvert}
\newcommand{\comment}[1]{} 
\begin{document}
\title{Planar algebras: a category theoretic point of view}
\author{Shamindra Kumar Ghosh}
\keywords{planar algebra, subfactor, category}
\begin{abstract}
We define Jones's planar algebra as a map of multicategories and constuct a
planar algebra starting from a $1$-cell in a pivotal strict $2$-category. We
prove finiteness results for the affine representations of finite depth
planar algebras. We also show that the radius of convergence of the
dimension of an affine representation of the planar algebra associated to a
finite depth subfactor is at least as big as the inverse-square of the
modulus.
\end{abstract}
\maketitle
\section{Introduction}
In \cite{subfac ind}, Vaughan Jones introduced the notion of index for type
$II_{1}$ subfactors. To any finite index subfactor $N \subset M$ one can
associate a tower of $II_1$ factors
%\begin{equation*}
$N\subset M\subset M_{1}\subset M_{2}\subset \cdots$.
%\end{equation*}
The standard invariant of the subfactor is then given by the grid of finite
dimensional algebras of relative commutants (see \cite{GHJ}, \cite{JS}, \cite{
Pop1}, \cite{lambda lattice})
\begin{equation*}
\begin{tabular}{ccccccccc}
$N\cap N$ & $\subset $ & $N^{\prime }\cap M$ & $\subset $ & $N^{\prime }\cap
M_{1}$ & $\subset $ & $N^{\prime }\cap M_{2}$ & $\subset $ & $\cdots $ \\
&  & $\cup $ &  & $\cup $ &  & $\cup $ &  &  \\
&  & $M^{\prime }\cap M$ & $\subset $ & $M^{\prime }\cap M_{1}$ & $\subset $
& $M^{\prime }\cap M_{2}$ & $\subset $ & $\cdots $%
\end{tabular}
\end{equation*}
Sorin Popa in \cite{lambda lattice} studied the question of which families
$\{Aij:-1\leq i\leq j\leq \infty \}$ of finite-dimensional
$C^{\ast }$-algebras could arise as the tower of relative commutants of an
extremal finite-index subfactor, that is, when does there exist such a
subfactor $M_{-1}\subset M_{0}$ such that $A_{ij}=M_{i}\cap M_{j}$. He
obtained a beautiful algebraic axiomatization of such families, which he called
$\lambda $\textit{-lattices}. Ocneanu gave a combinatorial description of
the standard invariant as so called {\it paragroups} (see \cite{EK}).
Subsequently, Jones gave a geometric reformulation of
the standard invariant, which he called \textit{planar algebras }(see
\cite{pln alg}). Jones then introduced the notion of `modules over a planar
algebra' in \cite{ann rep} and computed the irreducible modules
over the Temperley-Lieb planar algebras for index greater than $4$. Planar
algebras became a powerful tool to construct subfactors of index less than
$4$. In particular, a new construction of the subfactors with principal graph,
$E_{6}$ and $E_{8}$ could be given (see \cite{ann rep}). The author (see
\cite{Ghosh}) established a one-to-one correspondence
of all modules over the group planar algebra, that is, the planar algebra
associated to the subfactor arising from the action of a finite group, and
the representations of a non-trivial quotient of the quantum double of the
group over a certain ideal. The reason for the appearance of a quotient of
the quantum double instead of just the quantum double was allowing rotation
of internal discs in the definition of the modules over a planar
algebra. Similar results also appeared in the field of TQFTs.
Kevin Walker and Michael Freedman proved that the representations of
the {\em annularization} of a tensor category satisfying suitable
conditions that allow one to perform the Reshetikhin-Turaev construction of
TQFT, is equivalent to the representations of the quantum double of
the category.
The author (in \cite{Ghosh}) also showed that the radius of convergence of the 
dimension of a module is at least as big as the inverse-square of the modulus 
in the case of group planar algebras and thus answering a question in
\cite{ann rep}.

Subfactors have been extensively studied from the point of view of the
associated bicategory of $N-N$, $N-M$, $M-N$, $M-M$ bimodules (see for example,
\cite{Bis}, \cite{Mug1}, \cite{Mug2}, \cite{hypgrp}, \cite{Wen}). It is
natural to expect a correspondence between the bicategory and the planar
algebra associated to the subfactor. One of the main objectives of this paper
is to construct a planar algebra directly from a bicategory.

From \cite{Ghosh}, it follows that if the
modules over a planar algebra are defined with rigid internal disc then they
are more interesting because of the connection with quantum double in the
case of group planar algebras. Another objective is to find such modules
(called \textit{affine representations}) and prove finiteness results
of affine representation for finite depth planar algebras.

Next, we give a section-wise summary of the paper; all results in this paper
appeared in a PhD thesis (2006) of the author submitted in University of New
Hampshire. In the first section, we discuss the preliminaries from basic category
theory. The first subsection recalls the definition of multicategories and
maps between them from \cite{Leinster}. We introduce the notion of the
stucture of \textit{empty objects} in a multicategory; the trivial
example, namely, the multicategory of sets or vector spaces possess the 
structure
of empty objects. In the second subsection, we discuss basics of bicategory
theory and several structures related to a bicategory, namely functors,
transformation between functors and rigidity.

We construct a new example of a multicategory with the structure of empty
objects which we call \textit{Planar Tangle Multicategory} in the second
section. We re-define Jones's planar algebra simply as a map of
multicategories from the Planar Tangle Multicategory to the multicategory of
vector spaces; in fact, this was motivated by Jones's idea of putting
the planar algebra as well as its dual in the definition itself. In the end, we
discuss more structures (modulus, connectedness, local finiteness, $C^{\ast }$-
structure, etc.) on a planar algebra.

In the third section, we start with fixing a
$1$-cell in a pivotal strict $2$-category and construct a planar algebra. Some
of the techniques used here are similar to Jones's construction of a planar
algebra from a subfactor. However, we would like to mention that this
construction is totally algebraic and heavily depends on the \textit{graphical
calculus} of the $2$-cells and the pivotal structure plays a key role here.

Motivated with the connection of annular representation of the group planar
algebra with the representations of a certain quotient of the quantum double
of the group, we considered \textit{affine representations} of
a planar algebra in the fourth section; this was introduced by Jones and 
Reznikoff in \cite{Aff TL} and Graham and Lehrer in \cite{GL}. We also discuss 
the general theory of such representations.

In the fifth and the final section, we discuss affine representations
of a planar algebra associated to a finite depth subfactor. We find a bound on
the weights of these representations which is dependent on the depth of the
planar algebra. We also prove that at each weight, the number of isomorphism 
classes of irreducible affine representations is finite. We answer Jones's
question on the radius of convergence of the dimension of affine 
representations for finite depth subfactor planar algebras.
\vskip 0.5cm
\noindent{\bf Acknowledgement}: The author is grateful to Professor Dmitri
Nikshych for teaching him all he knows about bicategories and multicategories,
and many useful discussions. The author would like to thank Professors V. S.
Sunder and Vijay Kodiyalam for their constant help in working on `modules over
planar algebras' which lead to the results in Section \ref{fdpa}, and also
Ved Prakash Gupta for numerous suggestions and corrections.
\section{Preliminaries}
\subsection{Multicategories}In this subsection, we revisit the
definition of \textit{multicategory} and an \textit{algebra for a
multicategory} (introduced in \cite{Leinster}). We introduce the
structure of \textit{empty object} in a multicategory which will be
useful in the subsequent sections.
\begin{definition}
A multicategory $\mathcal{C}$ consists of:\\
(i) a class $\mathcal{C}_{0}$ whose elements are called {\it
  objects} of $\mathcal{C}$,\\
(ii) for all $n\in \mathbb{N}, \; \underline{a}=\left(
  a_{1},a_{2},\cdots ,a_{n}\right) \in (\mathcal{C}_{0})^{n}, \; a\in
  \mathcal{C}_{0}$, there exists a class
  $\mathcal{C}(\underline{a};a)$ whose elements are called {\it
    morphisms} or {\it arrows} from $\underline{a}$ to $a$, together
  with a distinguished arrow $1_a \in \mathcal{C}(a;a)$ called {\it
    identity morphism} for $a$,\\
(iii) for all $n \in \mathbb{N}, \; k_{1}, k_{2}, \cdots, k_{n} \in
  \mathbb{N}, \; \underline{a_{i}}=(a_{i}^{1},a_{i}^{2},\cdots
  ,a_{i}^{k_{i}}) \in (\mathcal{C}_{0})^{k_{i}}$ where $i\in
  \{1,2,\cdots ,n\}, \; \underline{a}=(a_{1},a_{2},\cdots ,a_{n}) \in
  (\mathcal{C}_{0})^{n}, \; a \in \mathcal{C}_{0}$, there exists a
  {\it composition map} $\circ $ denoted in the following way:

\noindent $\mathcal{C}(\underline{a};a)\times \mathcal{C}(\underline{a_{1}}%
;a_{1})\times \mathcal{C}(\underline{a_{2}};a_{2})\times \cdots \mathcal{C}(%
\underline{a_{n}};a_{n})\ni (\theta ,\theta _{1},\theta _{2},\cdots ,\theta
_{n})\overset{\circ }{\mapsto }\theta \circ (\theta _{1},\theta _{2},\cdots
,\theta _{n}) \in \mathcal{C}((\underline{a_{1}},\underline{a_{2}},\cdots ,%
\underline{a_{n}});a)$\\
\noindent where $(\underline{a_{1}},\underline{a_{2}},\cdots ,\underline{%
a_{n}})=(a_{1}^{1},a_{1}^{2},\cdots
,a_{1}^{k_{1}},a_{2}^{1},a_{2}^{2},\cdots ,a_{2}^{k_{2}},\cdots
,a_{n}^{1},a_{n}^{2},\cdots ,a_{n}^{k_{n}})\in \left( \mathcal{C}_{0}\right)
^{k_{1}+\cdots +k_{n}}$ satisfies the following conditions:

(a) {\it Associativity axiom}:
$\theta \circ (\theta _{1}\circ (\theta _{1}^{1},\theta
_{1}^{2},\cdots ,\theta _{1}^{k_{1}}),\theta _{2}\circ (\theta
_{2}^{1},\theta _{2}^{2},\cdots ,\theta _{2}^{k_{2}}),\cdots ,\theta
_{n}\circ (\theta _{n}^{1},\theta _{n}^{2},\cdots ,\theta
_{n}^{k_{n}}))$\\
$= (\theta \circ (\theta _{1},\theta _{2},\cdots ,\theta
_{n}))\circ (\theta _{1}^{1},\theta _{1}^{2},\cdots ,\theta
_{1}^{k_{1}},\theta _{2}^{1},\theta _{2}^{2},\cdots ,\theta
_{2}^{k_{2}},\cdots ,\theta _{n}^{1},\theta _{n}^{2},\cdots ,\theta
_{n}^{k_{n}})$ whenever the composites make sense,

(b) {\it Identity axiom}:
$\theta \circ (1_{a_{1}},1_{a_{2}},\cdots ,1_{a_{n}})=\theta =1_{a}\circ
\theta $ for all $\theta \in \mathcal{C}((a_{1},a_{2},\cdots ,a_{n});a)$.
\end{definition}
\begin{remark}
The associativity and identity axioms are easier to understand with
pictorial notation of arrows (see \cite{Leinster}).
\end{remark}
\begin{example}
The collection of sets $\mathcal{MS}et$ (resp. vector spaces $\mathcal{MV}ec$)
forms a multicategory where arrows are given by maps from cartesian
product of finite collection of sets to another set (resp. multilinear maps
from a finite collection of vector spaces to another vector space).
\end{example}
\begin{example}
Any tensor category $\mathcal{C}$ has an inbuilt multicategory structure in
the obvious way by setting $\mathcal{C}_{0}=ob(\mathcal{C})=$ set of objects
of $\mathcal{C}$ and $\mathcal{C}((a_{1},a_{2},\cdots ,a_{n});a)=Mor_{%
\mathcal{C}}((\cdots ((a_{1}\otimes a_{2})\otimes a_{3})\otimes \cdots
\otimes a_{n-1})\otimes a_{n},a)$.
\end{example}
\begin{definition}
Let $\mathcal{C}$ and $\mathcal{C}^{\prime }$ be multicategories. A map of
multicategories $f:\mathcal{C\rightarrow C}^{\prime }$ consists of a map $f:%
\mathcal{C}_{0}\mathcal{\rightarrow C}_{0}^{\prime }$ together with another
map
\begin{equation*}
f:\mathcal{C}(a_{1},a_{2},\cdots ,a_{n};a)\rightarrow \mathcal{C}^{\prime
}(f(a_{1}),f(a_{2}),\cdots ,f(a_{n});f(a))
\end{equation*}
such that composition of arrows and identities are preserved. (If $\mathcal{C%
}$ and $\mathcal{C}^{\prime }$ are multicategories with each morphism space
being vector space and composition being multilinear, then we will assume
that the map of multicategories is linear between the morphism spaces.)
\end{definition}
\begin{definition}
Let $\mathcal{C}$ be a multicategory. A $\mathcal{C}$-algebra is simply a
map of multicategories from $\mathcal{C}$ to $\mathcal{MS}et$. (If $\mathcal{%
C}$ is a multicategory with each morphism space being vector space and
composition being multilinear, then we will consider a $\mathcal{C}$-algebra
to be a map of multicategories from $\mathcal{C}$ to $\mathcal{MV}ec$.)
\end{definition}
\begin{definition}
A multicategory $\mathcal{C}$ is said to be {\it symmetric} if the following
conditions hold:
\begin{itemize}
\item for all $n\in \mathbb{N}$, $\underline{a}\in (\mathcal{C}_{0})^{n}$, $%
a\in \mathcal{C}_{0}$, $\sigma \in S_{n}$, there exists a map $-\cdot \sigma
:\mathcal{C}(\underline{a};a)\rightarrow \mathcal{C}(\underline{a}\cdot
\sigma ;a)$ (where $(a_{1,}a_{2},\cdots ,a_{n})\cdot \sigma =(a_{\sigma
(1),}a_{\sigma (2)},\cdots ,a_{\sigma (n)})$)satisfying:
\item $(\theta \cdot \sigma )\cdot \rho =\theta \cdot (\sigma \cdot \rho )$
and $\theta =\theta \cdot 1_{S_{n}}$ for all $n\in \mathbb{N}$, $\underline{a%
}\in (\mathcal{C}_{0})^{n}$, $a\in \mathcal{C}_{0}$, $\sigma $,$\rho \in
S_{n},$ $\theta \in \mathcal{C}(\underline{a};a)$,
\item $(\theta \cdot \sigma )\circ (\theta _{\sigma (1)}\cdot \pi _{\sigma
(1)},\theta _{\sigma (2)}\cdot \pi _{\sigma (2)},\cdots ,\theta _{\sigma
(n)}\cdot \pi _{\sigma (n)}) = (\theta \circ (\theta _{1},\theta _{2},\cdots,
\theta _{n}))\cdot (\widetilde{\sigma }\cdot (\pi _{\sigma (1)},
\pi _{\sigma (2)},\cdots ,\pi_{\sigma (n)}))$\\
for all $n$,$k_{i}\in \mathbb{N}$, $a\in \mathcal{C}_{0}$, $\underline{a}%
=(a_{1},a_{2},\cdots ,a_{n})\in (\mathcal{C}_{0})^{n}$, $\underline{a_{i}}%
\in (\mathcal{C}_{0})^{k_{i}}$, $\theta \in \mathcal{C}(\underline{a};a)$, $%
\theta _{i}\in \mathcal{C}(\underline{a_{i}};a_{i})$, $\sigma \in S_{n}$, $%
\pi _{i}\in S_{k_{i}}$, for $1\leq i\leq n$, where $\widetilde{\sigma }$ and
$(\pi _{\sigma (1)},\pi _{\sigma (2)},\cdots ,\pi _{\sigma (n)}))$ are
permutations in $S_{k_{1}+k_{2}+\cdots +k_{n}}$ defined by:\\
$\widetilde{\sigma }\left( j+\overset{i-1}{\underset{l=0}{\sum }}k_{\sigma
(l)}\right) =
j+\overset{\sigma (i)-1}{\underset{l=0}{\sum}}k_{l} (\pi _{\sigma
(1)},\pi_{\sigma (2)},\cdots ,\pi_{\sigma (n)}))\left( j+%
\overset{i-1}{\underset{l=0}{\sum }}k_{\sigma (l)}\right) =\left( \pi
_{\sigma (i)}(j)+\overset{i-1}{\underset{l=0}{\sum }}k_{\sigma (l)}\right)$\\
for all $1\leq i\leq n$, $1\leq j\leq k_{\sigma (i)}$ assuming $\sigma
(0)=0=k_{0}$.
\end{itemize}
\end{definition}
It will be easier to understand the axioms of symmetricity in pictorial
notation as in \cite{Leinster}.
\begin{remark}
Clearly, the multicategories $\mathcal{MS}et$, $\mathcal{MV}ec$ and the one
arising from a symmetric tensor category are symmetric.
\end{remark}
\begin{definition}
A multicategory $\mathcal{C}$ is said to have the structure of {\it empty
object} if for all $a\in \mathcal{C}_{0}$, there exists a class
$\mathcal{C}(\emptyset;a)$ such that the composition in $\mathcal{C}$ extends
in the following way:\\
for all $n\in \mathbb{N}$, $1\leq s\leq n$, $a\in \mathcal{C}_{0}$, $%
\underline{a}=(a_{1},a_{2},\cdots ,a_{n})\in (\mathcal{C}_{0})^{n}$, $\theta
\in \mathcal{C}(\underline{a};a)$, $\theta _{s}\in \mathcal{C}(\emptyset
;a_{s})$, and\\
for all $k_{i}\in \mathbb{N}$, $\underline{a_{i}}\in (\mathcal{C}%
_{0})^{k_{i}}$, $\theta _{i}\in \mathcal{C}(\underline{a_{i}};a_{i})$ where
$i\in \{1,2,\cdots ,n\} \setminus \{s\}$,\\
$\begin{array}{rcl}
\mathcal{C}(\underline{a};a)\times \mathcal{C}(\underline{a_{1}}; a_{1}) \times
\cdots \times \mathcal{C}(\emptyset ;a_{s})\times \cdots
\times \mathcal{C}(\underline{a_{n}};a_{n}) & \overset{\circ}{\longrightarrow}
& \mathcal{C}\left( \left( \underline{a_{1}}, \cdots,
\underline{a_{s-1}},\underline{a_{s+1}},\cdots ,\underline{a_{n}}
\right) ;a\right)\\
(\theta ,\theta _{1},\cdots
,\theta _{s},\cdots ,\theta _{n}) & \overset{\circ}{\longmapsto} &
\theta \circ (\theta _{1},\cdots ,\theta_{s},\cdots ,\theta _{n})
\end{array}$\\
such that this composition map is associative and $1_a \circ \theta =\theta$
for all $\theta \in \mathcal{C}(\emptyset;a)$.
\end{definition}
Both $\mathcal{MS}et$ and $\mathcal{MV}ec$ indeed have the structure of empty 
object; for instance, $\mathcal{MS}et ( \emptyset ; X) = X$ for any set $X$. 
We demand that a map of multicategories both having the structure of 
empty object, should preserve this structure.
\subsection{Bicategories} In this subsection, we will recall the definition of
\textit{bicategories}
and various other notions related to bicategories which will be useful in
section 3. Most of the materials in this section can be found in any
standard textbook on bicategories.
\begin{definition}
A bicategory $\mathcal{B}$ consists of:
\begin{itemize}
\item a class $\mathcal{B}_{0}$ whose elements are called objects or $0$-cells,
\item for each $A$, $B \in \mathcal{B}_{0}$, there exists a category
$\mathcal{B}(A,B)$ whose objects $f$ are called $1$-cells of $\mathcal{B}$ and
denoted by $A\overset{f}{\longrightarrow}B$ and whose morphisms $\gamma $ are
called $2$-cells of $\mathcal{B}$ and denoted by $f_{1}\overset{\gamma }{%
\longrightarrow }f_{2}$ where $f_{1}$, $f_{2}$ are $1$-cells in $\mathcal{B}%
\left( A,B\right) $,
\item for each $A$, $B$, $C\in \mathcal{B}_{0}$, there exists a functor $%
\otimes :\mathcal{B}(B,C)\times \mathcal{B}(A,B)\rightarrow \mathcal{B}(A,C)$,
\item Identity object: for each $A\in \mathcal{B}_{0}$, there exists an
object $1_{A}\in ob(\mathcal{B}(A,A))$ (the identity on $A$),
\item Associativity constraint: for each triple $A\overset{f}{%
\longrightarrow }B$, $B\overset{g}{\longrightarrow }C$, $C\overset{h}{%
\longrightarrow }D$ of $1$-cells, there exists an isomorphism $(h\otimes
g)\otimes f\overset{\alpha _{h,g,f}}{\longrightarrow }h\otimes (g\otimes f)$
in $Mor(\mathcal{B}(A,D))$,
\item Unit Constraints: for each $1$-cell $A\overset{f}{\longrightarrow }B$,
there exist isomorphisms $1_{B}\otimes f\overset{\lambda _{f}}{%
\longrightarrow }f$ and $f\otimes 1_{A}\overset{\rho _{f}}{\longrightarrow }%
f $ in $Mor(\mathcal{B}(A,B))$ such that $\alpha _{h,g,f}$, $\lambda _{f}$
and $\rho _{f}$ are natural in $h$, $g$, $f$, and satisfy the pentagon and
the triangle axioms (which are exactly similar to the ones in the definition
of a tensor category).
\end{itemize}
\end{definition}
On a bicategory $\mathcal{B}$, one can perform the operation $op$ (resp. $co$)
and obtain a new bicategory $\mathcal{B}^{op}$ (resp. $\mathcal{B}^{co}$) by
setting (i) $\mathcal{B}^{op}_0=\mathcal{B}_0=\mathcal{B}^{co}_0$, (ii) 
$\mathcal{B}^{op}(B,A)=
\mathcal{B}(A,B) = \left(\mathcal{B}^{co}(A,B) \right)^{op}$ as categories (where 
$op$ of a category is basically reversing the directions of the morphisms).

A bicategory will be called a \textit{strict }$2$\textit{-category} if the
associativity and the unit constraints are identities. An \textit{abelian}
(resp. \textit{semisimple}) bicategory $\mathcal{B}$ is a bicategory such
that $\mathcal{B}(A,B)$ is an abelian (resp. semisimple) category for every $%
A$, $B\in \mathcal{B}_{0}$ and the functor $\otimes $ is additive.
\begin{remark}
$\mathcal{B}(A,A)$ is a tensor category and $\mathcal{B}(A,B)$ is a
$(\mathcal{B}(B,B),\mathcal{B}(A,A))$-bimodule category for $0$-cells $A$, $B$.
(See \cite{ENO}, \cite{Ostrik} for definition of module category.)
\end{remark}
\begin{example}
A bicategory with only one $0$-cell is simply a tensor category.
\end{example}
\begin{example}
A bicategory can be obtained by taking rings as $0$-cells, $1$-cells $%
A\rightarrow B$ being $(B,A)$-bimodules and $2$-cells being bimodule maps.
The tensor functor is given by the obvious tensor product over a ring.
\end{example}
\begin{definition}
Let $\mathcal{B}$, $\mathcal{B}^{\prime }$ be bicategories. A {\it weak
functor} $ F=(F,\varphi ):\mathcal{B\rightarrow B}^{\prime }$ consists of:
\begin{itemize}
\item a function $F:\mathcal{B}_{0}\mathcal{\rightarrow B}_{0}^{\prime }$,
\item for all $A$,$B\in \mathcal{B}_{0}$, there exists a functor $F^{A,B}:%
\mathcal{B}(A,B)\rightarrow \mathcal{B}^{\prime }(F(A),F(B))$ written simply
as $F$,
\item for all $A$, $B$, $C\in \mathcal{B}_{0}$, there exists a natural
isomorphism $\varphi ^{A,B,C}:\otimes ^{\prime }\circ (F^{B,C}\times
F^{A,B})\rightarrow F^{A,C}\circ \otimes $ written simply as $\varphi $
(where $\otimes $ and $\otimes ^{\prime }$ are the tensor products of $%
\mathcal{B}$ and$\mathcal{\ B}^{\prime }$ respectively),
\item for all $A\in \mathcal{B}_{0}$, there exists an invertible (with
respect to composition) $2$-cell $\varphi _{A}:1_{F(A)}\rightarrow F(1_{A})$,
\end{itemize}
satisfying commutativity of certain diagrams (consisting of $2$-cells) which
are analogous to the hexagonal and rectangular diagrams appearing in the
definition of a tensor functor.
\end{definition}
\comment{
A \textit{coweak functor} $F:\mathcal{B\rightarrow B}^{\prime }$ satisfies
all conditions of a weak functor except the functor
$F^{A,B}:\mathcal{B}(A,B)\rightarrow \mathcal{B}^{\prime }(F(A),F(B))$ is
contravariant and the natural isomorphism $\varphi^{A,B,C}$ is replaced by
$\varphi ^{A,B,C}:\otimes ^{\prime }\circ (F^{A,B}\times
F^{B,C})\rightarrow F^{A,C}\circ \otimes \circ (flip)$ where $(flip)$ is the
functor obtained by interchanging the co-ordinates.}
\begin{definition}
Let $F=(F,\varphi )$, $G=(G,\psi ):\mathcal{B\rightarrow B}^{\prime }$ be
weak functors. A weak transformation $\sigma :F\rightarrow G$ consists of:
\begin{itemize}
\item for all $A\in \mathcal{B}_{0}$, there exists a $1$-cell $\sigma
_{A}\in ob(\mathcal{B(}F(A),G(A))$,
\item for all $A$, $B\in \mathcal{B}_{0}$, there exists a natural
transformation $\sigma ^{A,B}:(\sigma _{B}\otimes ^{\prime
}F^{A,B})\rightarrow G^{A,B}\otimes ^{\prime }\sigma _{A}$ written simply as
$\sigma $ (where $(\sigma _{B}\otimes ^{\prime }F^{A,B})$, $G^{A,B}\otimes
^{\prime }\sigma _{A}:\mathcal{B(}A,B)\rightarrow \mathcal{B(}F(A),G(B))$
are functors defined in the obvious way), satisfying the follwoing:
\item for all $x\in ob(\mathcal{B}(B,C))$, $y\in ob(\mathcal{B}(A,B))$ where
$A$, $B$, $C\in \mathcal{B}_{0}$, the following diagrams commute:%
\begin{equation*}
\begin{tabular}{ccccc}
$\sigma _{C}\otimes ^{\prime }F(x)\otimes ^{\prime }F(y)$ & $\overset{\sigma
_{x}\otimes ^{\prime }1_{F(y)}}{\longrightarrow }$ & $G(x)\otimes ^{\prime
}\sigma _{B}\otimes ^{\prime }F(y)$ & $\overset{1_{G(x)}\otimes ^{\prime
}\sigma _{y}}{\longrightarrow }$ & $G(x)\otimes ^{\prime }G(y)\otimes
^{\prime }\sigma _{A}$ \\
$1_{\sigma _{C}}\otimes ^{\prime }\varphi _{x,y}\downarrow $ &  &  &  & $%
\downarrow \psi _{x,y}\otimes ^{\prime }1_{\sigma _{A}}$ \\
$\sigma _{C}\otimes ^{\prime }F(x\otimes y)$ &  & $\underset{
\sigma _{x\otimes y}}{\longrightarrow }$ &  & $G(x\otimes y)\otimes^{\prime}
\sigma _{A}$
\end{tabular}
\end{equation*}
\begin{equation*}
\begin{tabular}{ccccc}
$\sigma _{A}\otimes ^{\prime }1_{F(A)}$ & $\overset{\rho _{\sigma
_{A}}^{\prime }}{\longrightarrow }$ & $\sigma _{A}$ & $\overset{\lambda
_{\sigma _{A}}^{\prime }}{\longleftarrow }$ & $1_{G(A)}\otimes ^{\prime
}\sigma _{A}$\\
$1_{\sigma _{A}}\otimes ^{\prime }\varphi _{A}\downarrow $ &  &  &  & $
\downarrow \psi _{A}\otimes ^{\prime }1_{\sigma _{A}}$ \\
$\sigma _{A}\otimes ^{\prime }F(1_{A})$ &  & $\underset{\sigma _{1_{A}}}{
\longrightarrow }$ &  & $G(1_{A})\otimes ^{\prime }\sigma _{A}$
\end{tabular}
\end{equation*}
where $\lambda ^{\prime }$, $\rho ^{\prime }$ are the left and right unit
constraints of $\mathcal{B}^{\prime }$.
\end{itemize}
\end{definition}
\begin{remark}
Composition of weak functors and weak transformations follows
exactly from composition of functors and natural transformations in
categories.
One can also extend the notion of natural isomorphisms in categories to weak
isomorphism in bicategories.
\end{remark}
\begin{theorem}
(Coherence Theorem for Bicategories) Let $\mathcal{B}$ be a bicategory. Then
there exists a strict $2$-category $\mathcal{B}^{\prime }$ and functors $F:%
\mathcal{B\rightarrow B}^{\prime }$, $G:\mathcal{B}^{\prime }\rightarrow
\mathcal{B}$ such that $id_{\mathcal{B}}$ (resp. $id_{\mathcal{B}^{\prime }}$%
) is weakly isomorphic to $G\circ F$ (resp. $F\circ G$).
\end{theorem}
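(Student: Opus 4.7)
The plan is to apply a bicategorical Yoneda embedding. The key background fact is that for any bicategory $\mathcal{A}$, the bicategory $[\mathcal{A}^{op}, \mathcal{C}at]$ of weak functors, weak transformations and modifications into the strict $2$-category $\mathcal{C}at$ of (small) categories is itself a strict $2$-category, because all the compositions occurring in its definition are ultimately performed inside $\mathcal{C}at$, whose associativity and unit constraints are identities.

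First, I would define a Yoneda-type weak functor $Y : \mathcal{B} \to [\mathcal{B}^{op}, \mathcal{C}at]$ on objects by $A \mapsto \mathcal{B}(-, A)$; a $1$-cell $f : A \to B$ is sent to the weak transformation $Y(A) \to Y(B)$ given componentwise by post-composition with $f$, with the associativity constraint $\alpha$ of $\mathcal{B}$ providing the required coherence $2$-cell, and $2$-cells of $\mathcal{B}$ are sent to the evident modifications. Checking that this is a genuine weak functor is a direct but lengthy verification using the pentagon and triangle axioms of $\mathcal{B}$.

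Second, I would prove the bicategorical Yoneda lemma in the form needed here: $Y$ is a \emph{local equivalence}, meaning that each functor $Y^{A,B} : \mathcal{B}(A, B) \to [\mathcal{B}^{op}, \mathcal{C}at]\bigl(Y(A), Y(B)\bigr)$ is an equivalence of categories. The inverse sends a weak transformation $\sigma : Y(A) \Rightarrow Y(B)$ to its value $\sigma_{A}(1_{A}) \in \mathcal{B}(A, B)$; the unit isomorphism uses the $\rho$-constraint of $\mathcal{B}$, and the counit uses naturality of $\sigma$. I would then take $\mathcal{B}'$ to be the full sub-$2$-category of $[\mathcal{B}^{op}, \mathcal{C}at]$ spanned by the essential image of $Y$, which is strict because it is a full sub-$2$-category of a strict $2$-category.

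Finally, setting $F$ to be $Y$ corestricted to $\mathcal{B}'$, we get a weak functor that is essentially surjective on objects by construction and locally an equivalence by the Yoneda step, hence a biequivalence. Using the axiom of choice, for each $X \in \mathcal{B}'_{0}$ I would pick $G(X) \in \mathcal{B}_{0}$ together with a chosen equivalence $Y(G(X)) \simeq X$, and extend $G$ to a weak functor by transporting structure along these chosen equivalences; the weak isomorphisms $G \circ F \cong id_{\mathcal{B}}$ and $F \circ G \cong id_{\mathcal{B}'}$ then come directly from the chosen equivalences. The main obstacle is the Yoneda lemma itself: one must check that every weak transformation $Y(A) \Rightarrow Y(B)$ is determined, up to a unique invertible modification, by its value on $1_{A}$, which requires a careful interplay of naturality with the unit and associativity constraints of $\mathcal{B}$ in order to reconstruct every component $\sigma_{C}(g)$ from $\sigma_{A}(1_{A})$.
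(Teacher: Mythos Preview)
The paper does not actually give its own proof of this theorem: immediately after the statement it simply writes ``See \cite{Leinster} for a proof.'' So there is no in-paper argument to compare against; the result is quoted as background.

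Your proposal is a correct outline of the standard bicategorical Yoneda-embedding proof of coherence (essentially the approach going back to Street and discussed in Leinster's book). The key points---that $[\mathcal{B}^{op},\mathcal{C}at]$ is a strict $2$-category, that the Yoneda functor $Y$ is a local equivalence by the bicategorical Yoneda lemma, and that a biequivalence admits a weak inverse constructed by choosing pseudo-inverses objectwise---are all valid. One small caution: the strictness of $[\mathcal{B}^{op},\mathcal{C}at]$ requires some care in how one \emph{defines} composition of weak transformations (one must make a uniform choice so that associativity holds on the nose rather than merely up to isomorphism), but this is routine once one fixes conventions. With that understood, your sketch is sound and is exactly the sort of argument the cited reference provides.
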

See \cite{Leinster} for a proof.

Let $A\overset{f}{\longrightarrow }B$ be a $1$-cell in a bicategory $
\mathcal{B}$. A \textit{right} (resp. \textit{left}) \textit{dual of }$f$ is
an $1$-cell $B\overset{f^{\ast }}{\longrightarrow }A$ (resp. $B\overset{
^{\ast }f}{\longrightarrow }A$) such that there exists $2$-cells $f^{\ast
}\otimes f\overset{e_{f}}{\longrightarrow }1_{A}$ and $1_{B}\overset{c_{f}}{
\longrightarrow }f\otimes f^{\ast }$ (resp.
$f\otimes \: ^{\ast}f \overset{_{f}e
}{\longrightarrow }1_{B}$ and $1_{A}\overset{_{f}c}{\longrightarrow}\: ^{\ast
}f\otimes f$) such that the following identities (ignoring the associativity
and unit contraints) are satisfied:
\begin{equation*}
\begin{tabular}{rcl}
$(1_{f}\otimes e_{f})\circ \left( c_{f}\otimes 1_{f}\right) =1_{f}$ & and
& $(e_{f}\otimes 1_{f^{\ast }})\circ \left( 1_{f^{\ast }}\otimes c_{f}\right)
=1_{f^{\ast }}$\\
(resp. $(1_{f} \otimes \: _{f}e)\circ \left( _{f} c \otimes 1_{^{\ast}\! f}
\right)=1_{^{\ast} \! f}$ & and &
$(_{f} e\otimes 1_{^{\ast} \! f})\circ \left( 1_{f}\otimes \: _{f}c\right) =1_{f}$
)
\end{tabular}
\end{equation*}
(Here $e$ stands for evaluation and $c$ stands for coevaluation.) One can
show that two right (resp. left) duals are isomorphic via an isomorphism
which is compatible with the evaluation and coevaluation maps. A bicategory
is said to be \textit{rigid} if right and left dual exists for every $1$
-cell. Further, in a rigid bicategory $\mathcal{B}$, one can consider right
dual as a weak funtor $\ast =(\ast ,\varphi ):\mathcal{B}\rightarrow
\mathcal{B}^{op\, co}$ in the following way:
\begin{itemize}
\item for each $1$-cell $f$, we fix a triple $(f^{\ast },e_{f},c_{f})$ so
that when $f=1_{A}$ where $A\in \mathcal{B}_{0}$, then $f^{\ast }=1_{A}$, $
e_{f}=\lambda _{1_{A}}$ ($=\rho _{1_{A}}$, see \cite{Kassel} for proof), $
c_{f}=\lambda _{1_{A}}^{-1}=\rho _{1_{A}}^{-1}$,
\item $\ast $ induces identity map on $\mathcal{B}_{0}$,
\item for all $A$, $B\in \mathcal{B}_{0}$, $f$, $g\in ob(\mathcal{B}(A,B))$
and $2$-cell $\gamma :f\rightarrow g$, define the contravariant functor $
\ast :\mathcal{B}(A,B)\rightarrow \mathcal{B}(B,A)$ by $\ast (f)=f^{\ast }$
and $\ast (\gamma )$ denoted by $\gamma ^{\ast }$, is given by the
composition of the following $2$-cells
\begin{equation*}
g^{\ast }\overset{\rho _{g^{\ast }}^{-1}}{\longrightarrow }g^{\ast }\otimes
1_{A}\overset{1_{g^{\ast }}\otimes c_{f}}{\longrightarrow }g^{\ast }\otimes
f\otimes f^{\ast }\overset{1_{g^{\ast }}\otimes \gamma \otimes 1_{f^{\ast }}}
{\longrightarrow }g^{\ast }\otimes g\otimes f^{\ast }\overset{e_{g}\otimes
1_{f^{\ast }}}{\longrightarrow }1_{B}\otimes f^{\ast }\overset{\lambda
_{f^{\ast }}}{\longrightarrow }f^{\ast }\text{,}
\end{equation*}
\item for all $A$, $B$, $C\in \mathcal{B}_{0}$, the natural isomorphism $
\varphi ^{A,B,C}:\otimes \circ (\ast ^{A,B}\times \ast ^{B,C})\rightarrow
\ast ^{A,C}\circ \otimes \circ (flip)$ is defined by:\\
for $f\in ob(\mathcal{B}(A,B))$, $g\in ob(\mathcal{B}(B,C))$, the invertible
$2$-cell $\varphi _{f,g}$ is given by the composition of the following $2$
-cells\\
$ f^{\ast }\otimes g^* \overset{1_{(f^{\ast }\otimes g^{\ast})}\otimes
c_{(g\otimes f)}}{\longrightarrow} f^{\ast }\otimes g^{\ast }\otimes (g\otimes
f)\otimes (g\otimes f)^{\ast } \overset{1_{f^{\ast }}\otimes e_{g}\otimes
1_f \otimes
1_{(g\otimes f)^{\ast }}}{\longrightarrow}(f^{\ast }\otimes f)\otimes (g\otimes
f)^{\ast }\overset{e_{f}\otimes 1_{(g\otimes f)^{\ast }}}{\longrightarrow
}(g\otimes f)^{\ast}$
ignoring the associativity and the unit constraints necessary to make sense
of the composition,
\item for all $A\in \mathcal{B}_{0}$, the invertible $2$-cell $\varphi
_{A}:1_{A}\rightarrow 1_{A}$ is given by identity morphism on $1_{A}$.
\end{itemize}
Similarly, one can define a left dual functor in a rigid bicategory.
\section{Planar Algebras}
In this section, we will introduce a new example of a symmetric
multicategory, namely, the \textit{Planar Tangle Multicategory} ($\mathcal{P}
$) which possesses the additional structure of empty object\textit{.} Any
\textit{planar algebra}, in the sense of \cite{pln alg}, turns out to be a $%
\mathcal{P}$-algebra. In the end, we also exhibit some examples and define
more structures on a planar algebra.

Let us first define \textit{planar tangles }which are the building blocks of
the planar tangle multicategory. Fix $k_0 \in \mathbb{N}_{0}=\mathbb{N\cup \{}%
0\}$ and $\varepsilon \in \{+,-\}.$
\begin{definition}\label{Def of pln tang}
A $(k_0,\varepsilon_0)$-planar tangle is an isotopy
class of pictures containing:
\begin{itemize}
\item an external disc $D_0$ on the Euclidean plane $\mathbb{R}^{2}$ with
$2k_0$
distinct points on the boundary numbered clockwise,
\item finitely many (possibly zero) non-intersecting internal discs $D_{1}$,
$D_{2},\cdots,D_{n}$, lying in the interior of $D_0$ with $2k_{i}$ distinct
points on the boundary of $D_{i}$ numbered clockwise where $k_{i}\in \mathbb{%
N}_{0}$ for $1\leq i\leq n$,
\item a collection $\mathcal{S}$ of smooth non-intersecting oriented curves
(called strings) on
$\left[ D_0 \setminus \left( \overset{n}{\underset{i=1}{\cup }}
D_{i}\right) ^{o}\right] $ such that:

(a) each marked point on the boundaries of $D_0, D_{1}, \cdots , D_{n}$ is
connected to exactly one string,

(b) each string either has no end-points or has exactly two end-points on
the marked points,

(c) the orientations induced on each connected component of $\left[
{D_0}^{o}\setminus \left( \left( \overset{n}{\underset{i=1}{\cup }}D_{i}\right)
\cup \mathcal{S}\right) \right] $ by different bounding strings should be
same,
\item the orientation induced in the connected component of $\left[
{D_0}^{o}\setminus \left( \left( \overset{n}{\underset{i=1}{\cup }}D_{i}\right)
\cup \mathcal{S}\right) \right] $, adjacent to the first and the last marked
point on the boundary of $D_0$, should have orientation positive
(anti-clockwise) or negative (clockwise) according to the sign of
$\varepsilon_0$.
\end{itemize}
\end{definition}
\begin{remark}
For each $i\in \{1,2,\cdots ,n\}$, we can assign $\varepsilon _{i}\in
\{+,-\} $ to the internal disc $D_{i}$ depending on the orientation of the
connected component of $\left[ {D_0}^{o}\setminus \left( \left( \overset{n}{%
\underset{i=1}{\cup }}D_{i}\right) \cup \mathcal{S}\right) \right] $,
adjacent to the first and the last marked points on the boundary of $D_{i}$.
$(k_{i},\varepsilon _{i})$ will be called the colour of $D_{i}$ and $%
(k,\varepsilon )$ will be the colour of $D$.
\end{remark}
Sometimes, instead of numbering each marked point on the boundary of a disc
with colour $(k,\varepsilon )$, we will write $\varepsilon $ very close to
the boundary of the disc and in the connected component adjacent to the
first and the last points. The orientation of the strings is equivalent to
putting checkerboard shading on the connected components such that all
components with negative orientation get shaded.
\begin{figure}[h]
\includegraphics[scale=0.40]{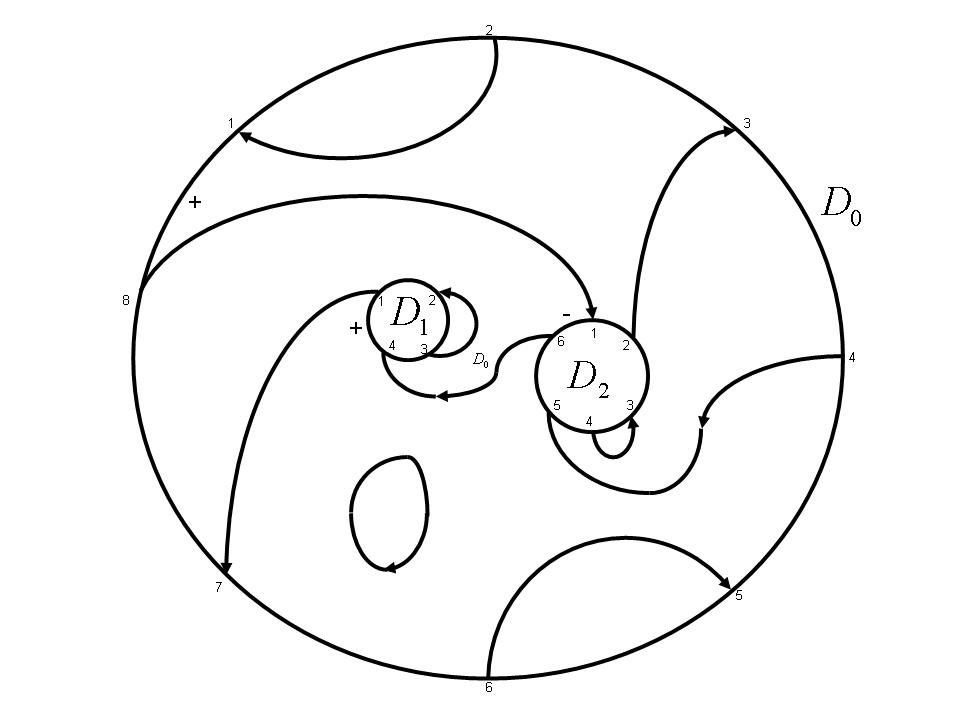}
\caption{Example of a $(4,+)$-planar
tangle with two internal discs $D_{1}$, $D_{2}$ with colours $(2,+)$, $(3,-)$
respectively.}
\label{egtang}
\end{figure}\\
Let $\mathcal{T}((k_{1},\varepsilon _{1}),(k_{2},\varepsilon _{2}),\cdots
,(k_{n},\varepsilon _{n});(k,\varepsilon ))$ be the set of $(k,\varepsilon )$%
-planar tangles with $n$ internal discs $D_{1},D_{2},\cdots ,D_{n}$ with
colours $(k_{1},\varepsilon _{1})$, $(k_{2},\varepsilon _{2})$, $\cdots $, $%
(k_{n},\varepsilon _{n})$ respectively, $\mathcal{T}(\emptyset
;(k,\varepsilon ))$ be the set of $(k,\varepsilon )$-planar tangles with no
internal disc and $\mathcal{T}_{(k,\varepsilon )}$ be the set of all $%
(k,\varepsilon )$-planar tangles . The composition of two tangles
$T\in \mathcal{T}((k_{1},\varepsilon _{1}),(k_{2},\varepsilon
_{2}),\cdots ,(k_{n},\varepsilon _{n});(k,\varepsilon ))$ and $S\in \mathcal{%
T}((l_{1},\delta _{1}),(l_{2},\delta _{2}),\cdots ,(l_{m},\delta
_{m});(k_{i},\varepsilon _{i}))$ (resp. $S\in \mathcal{T}(\emptyset
;(k_{i},\varepsilon _{i}))$), denoted by
$\left( T\underset{i}{\circ }S\right) \in \mathcal{T}%
((k_{1},\varepsilon _{1}),\cdots ,(k_{i-1},\varepsilon _{i-1}),(l_{1},\delta
_{1}),\cdots ,(l_{m},\delta _{m}),(k_{i+1},\varepsilon _{i+1}),\cdots
,(k_{n},\varepsilon _{n});(k,\varepsilon ))$ (resp. $\left( T\underset{i}{%
\circ }S\right) \in $ $\mathcal{T}((k_{1},\varepsilon _{1}),\cdots
,(k_{i-1},\varepsilon _{i-1}),(k_{i+1},\varepsilon _{i+1}),\cdots
,(k_{n},\varepsilon _{n});(k,\varepsilon ))$), is obtained by gluing the
external boundary of $S$ with the boundary of the $i^{\text{th}}$ internal
disc of $T$ preserving the marked points on either of them with the help of
isotopy, and then erasing the common boundary.

The \textit{Planar Tangle Multicategory}, denoted by $\mathcal{P}$, is
defined as:
\begin{itemize}
\item {\it Objects}: $\mathcal{P}_{0}=\left\{ (k,\varepsilon ):k\in
\mathbb{N}_{0},\varepsilon \in \{+,-\}\right\} $,
\item {\it Morphisms}:
$\mathcal{P}((k_{1},\varepsilon _{1}),(k_{2},\varepsilon _{2}),\cdots
,(k_{n},\varepsilon _{n});(k,\varepsilon ))$ (resp. $\mathcal{P}(\emptyset
;(k,\varepsilon ))$) is the vector space generated by $\mathcal{T}%
((k_{1},\varepsilon _{1}),(k_{2},\varepsilon _{2}),\cdots
,(k_{n},\varepsilon _{n});(k,\varepsilon ))$ (resp. $\mathcal{T}(\emptyset
;(k,\varepsilon ))$) as a basis,
\item composition of morphisms is given by the multilinear extension of the
composition of tangles as described above,
\item the identity morphism $1_{(k,\varepsilon )}\in \mathcal{T}%
((k,\varepsilon );(k,\varepsilon ))\subset \mathcal{P}((k,\varepsilon
);(k,\varepsilon ))$ is given by the $(k,\varepsilon)$-planar tangle with
exactly one internal disc with colour $(k,\varepsilon )$, containing
precisely $2k$ strings such that $i^{\text{th}}$ point on the internal disc
is connected to the $i^{\text{th}}$ point on the external disc by a string
for $1\leq i<2k$.
\end{itemize}
\begin{figure}[h]
\includegraphics[scale=0.20]{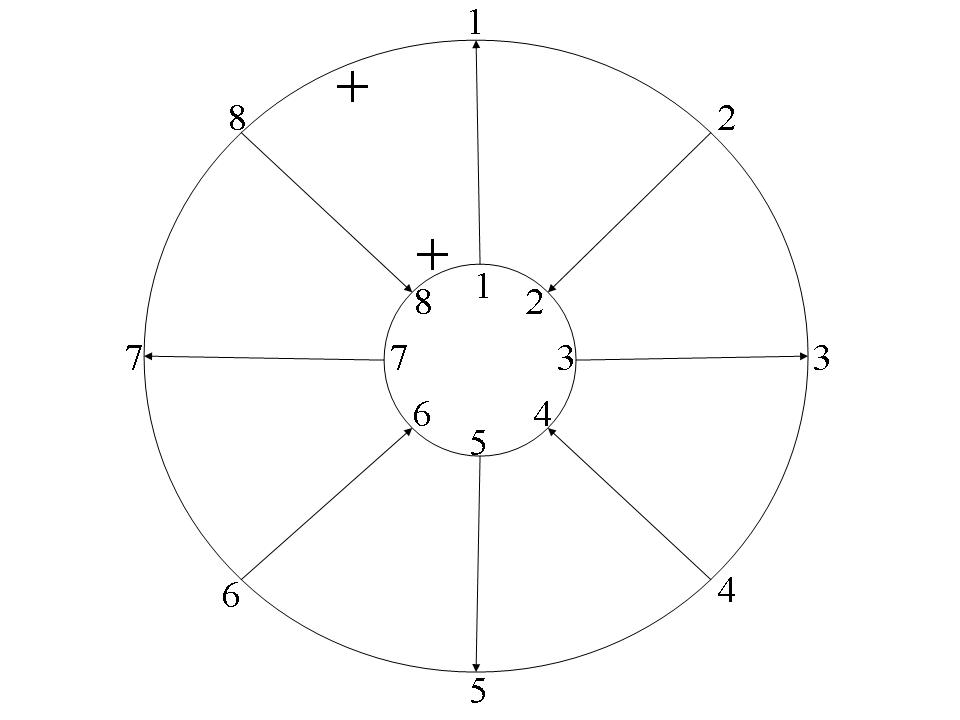}
\caption{$1_{(4,+)}\in \mathcal{P}((4,\varepsilon );(4,\varepsilon ))$}
\end{figure}
We leave the checking of associativity and identity axioms to the reader. A
moment's observation also reveals that $\mathcal{P}$ is symmetric and has
the structure of empty object.
\begin{definition}
A planar algebra $P$ is a $\mathcal{P}$-algebra, that is, a map of
multicategories from $\mathcal{P}$ to $\mathcal{MV}ec$.
\end{definition}
\begin{remark}
The first natural example of planar algebra is the $\mathcal{P}$-algebra which
takes the object $(k,\varepsilon)$ to $\mathcal{P}_{(k,\varepsilon)}$, and
morphisms $T$ to the multilinear map given by left-compostion of $T$. This is
called the \textit{Universal Planar Algebra} in \cite{pln alg}.
\end{remark}
\begin{remark}
For a planar algebra $P$, the collection of vector spaces $\left\{
P(k,\varepsilon )\right\} _{k\in \mathbb{N}_{0}}$ forms a unital filtered
algebra where $\varepsilon \in \{+,-\}$. The multiplication of $%
P(k,\varepsilon )$, inclusion of $P(k-1,\varepsilon )$ inside $
P(k,\varepsilon )$ and identity of $P(k,\varepsilon )$ are induced by the
following tangles:

\hskip -8mm
\includegraphics[scale=0.225]{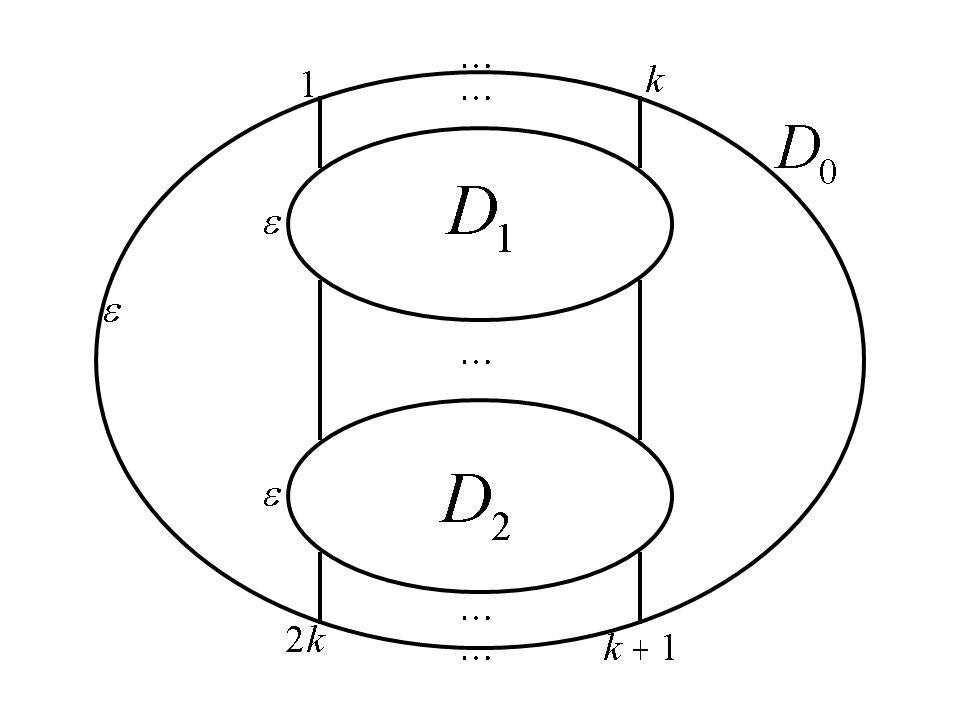}
\includegraphics[scale=0.225]{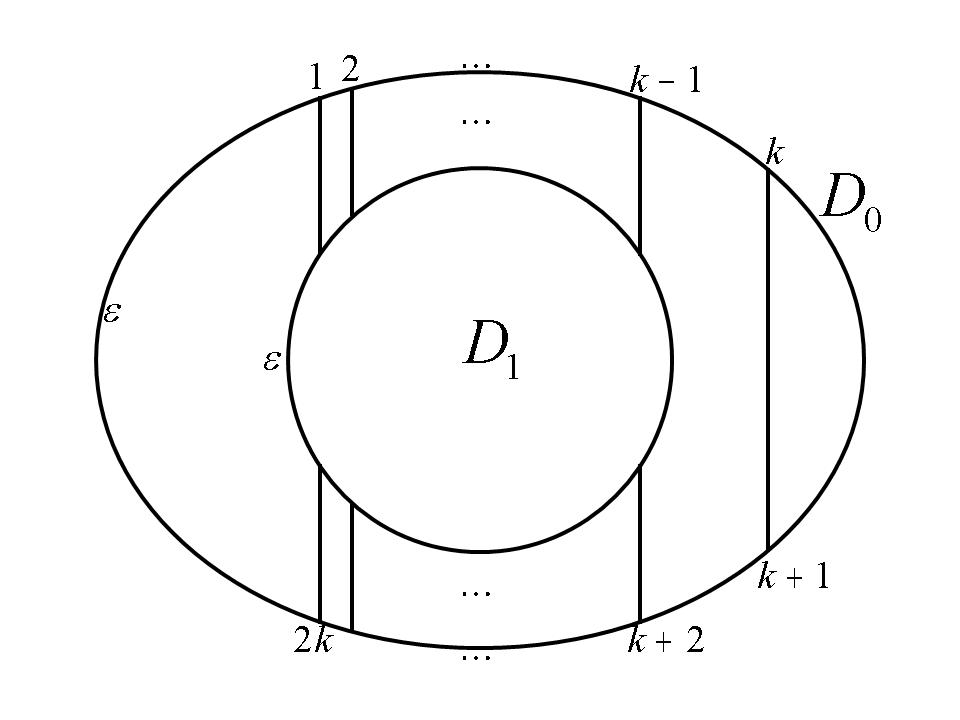}
\includegraphics[scale=0.225]{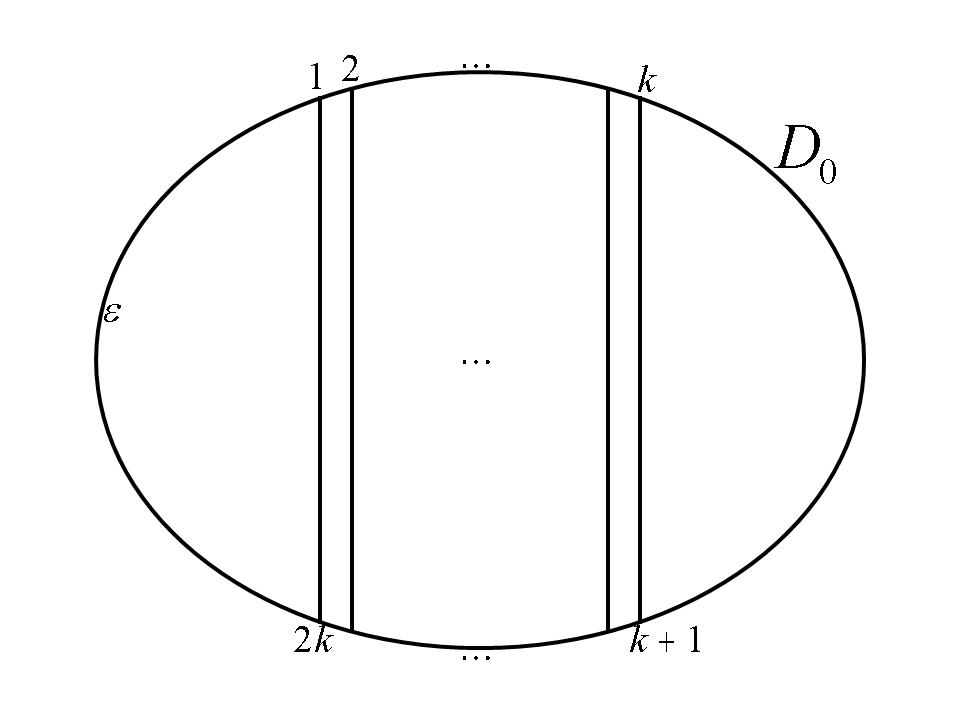}
respectively.
\end{remark}
\comment{
\begin{example}
\bigskip \textbf{Temperley-Lieb Planar Algebra with modulus }$(\delta
_{+,}\delta _{-})$

Let $\delta _{+}$, $\delta _{-}\in \mathbb{C}$. Consider the subpace $%
\mathcal{W}_{(k,\varepsilon )}$ of $\mathcal{P}(\emptyset ;(k,\varepsilon ))$
generated by elements of the form $T_{1}-\delta _{+}^{m}\delta _{-}^{n}T_{2}$
where $T_{1}$, $T_{2}\in \mathcal{T}(\emptyset ;(k,\varepsilon ))$ such that
$T_{1}$ can be isotopically obtained from $T_{2}$ by attaching $m$ many
loops oriented clockwise and $n$ many loops oriented anti-clockwise. Set $%
(TL_{(\delta _{+,}\delta _{-})})_{(k,\varepsilon )}=\frac{\mathcal{P}%
(\emptyset ;(k,\varepsilon ))}{\mathcal{W}_{(k,\varepsilon )}}$.

Define the map of multicategories $TL_{(\delta _{+,}\delta _{-})}:\mathcal{P}%
\rightarrow \mathcal{MV}ec$ by

$TL_{(\delta _{+,}\delta _{-})}(k,\varepsilon )=(TL_{(\delta _{+,}\delta
_{-})})_{(k,\varepsilon )}$,

$TL_{(\delta _{+,}\delta _{-})}(X)([Y_{1}],[Y_{2}],\cdots ,[Y_{n}])=[X\circ
(Y_{1},Y_{2},\cdots ,Y_{n})]$

where $X\in \mathcal{P}((k_{1},\varepsilon _{1}),(k_{2},\varepsilon
_{2}),\cdots ,(k_{n},\varepsilon _{n});(k,\varepsilon ))$, $Y_{i}\in
\mathcal{P}(\emptyset ;(k_{i},\varepsilon _{i}))$ for $1\leq i\leq n$. We
leave the checking of preserving associativity, identity and structure of
empty object to the reader. This is the first basic and non-trivial example
of a planar algebra.
\end{example}
}
We will now define more structures on a planar algebra. A planar algebra $P$
is said to be \textit{connected} (resp. \textit{locally finite}) if $\dim
(P(0,+))=1=\dim (P(0,-))$ (resp. $\dim (P(k,\varepsilon ))<\infty $ for all $%
(k,\varepsilon )$). A planar algebra $P$ is said to have \textit{modulus} $%
(\delta _{+},\delta _{-})$ if $P(T)=\delta _{+}~P(T_{1})$ (resp. $%
P(T)=\delta _{-}~P(T_{1})$) where $T$ is a planar tangle with a contractible
loop oriented clockwise (resp. anti-clockwise) and $T_{1}$ is the tangle $T$
with the loop removed. A connected planar algebra $P$ is called \textit{%
spherical} if two tangles $T_{1}\in \mathcal{T}_{(0,\varepsilon )}$ and $%
T_{2}\in \mathcal{T}_{(0,\eta )}$ induce the same multilinear functional by
expressing the images of $P(T_{1})$ and $P(T_{2})$ as scalar multiples of
the identities of $P(0,\varepsilon )$ and $P(0,\eta )$ respectively whenever
one can obtain $T_{1}$ from $T_{2}$ after embedding them on the unit sphere
and using spherical isotopy.
\begin{remark}
If $P=\{P_n\}_{n \geq 0}$ is a planar algebra in the sense of Jones
(\cite{pln alg}) with modulus $(\delta_+,\delta_-)$ (where $Z_T$
denotes the
action of a tangle $T$ necessarily with positive colors for all discs in it),
then one can define a planar algebra $P:\mathcal{P} \rightarrow \mathcal{MV}ec$
via:

(i) $P(k,\varepsilon)=
\left\{
\begin{array}{ll}
P_k & \text{if } \varepsilon = +\\
Range(Z_{LCE_k}) & \text{if } \varepsilon = -
\end{array}
\right.$\\
where the tangle $LCE_k$ is given in Figure \ref{lcek},
\begin{figure}[h]
\includegraphics[scale=0.24]{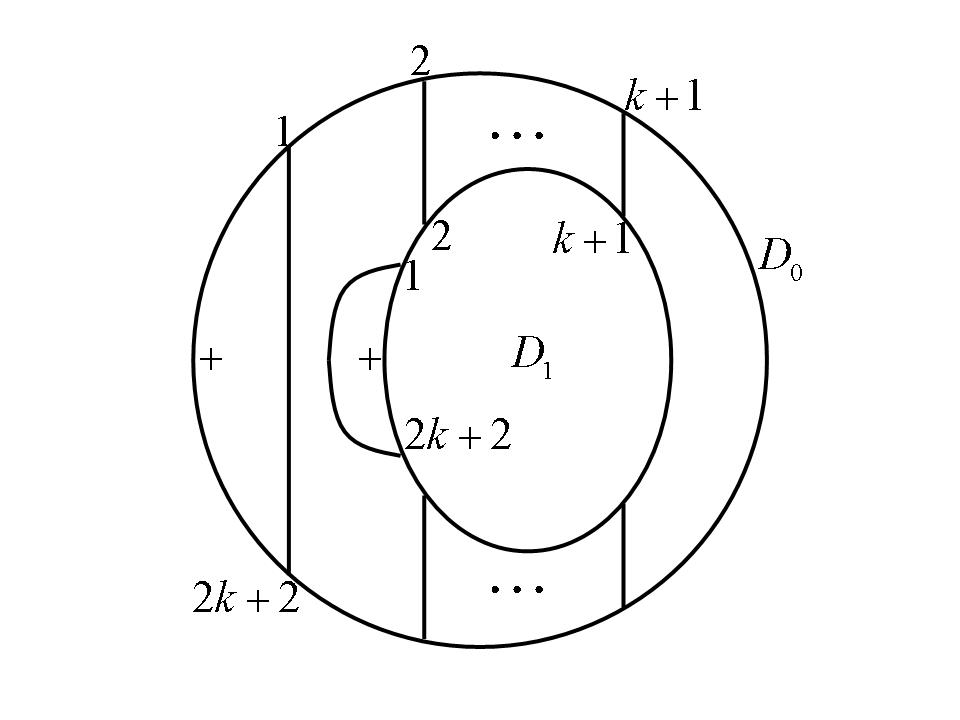}
\caption{Left conditional expectation tangle}
\label{lcek}
\end{figure}

(ii) for $T\in \mathcal{T}((k_{1},\varepsilon _{1}),(k_{2},\varepsilon
_{2}),\cdots ,(k_{n},\varepsilon _{n});(k_0,\varepsilon_0))$, first define
$T^\prime = U_{(k_0,\varepsilon_0)} \circ T \circ (S_{(k_1,\varepsilon_1)},
\cdots, S_{(k_n,\varepsilon_n)})$ where $S_{(k,+)}=1_{(k,+)}=U_{(k,+)}$, and
$S_{(k,-)}$ and $U_{(k,-)}$
are given by the tangles in Figure \ref{leftcapcup}.
\begin{figure}[h]
\includegraphics[scale=0.24]{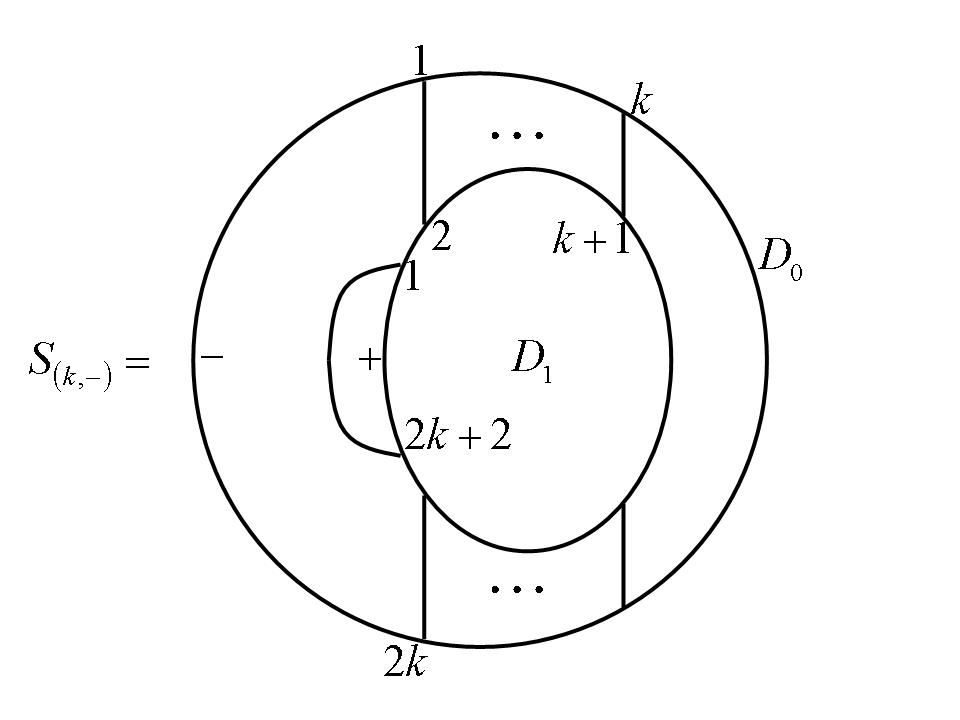}
\includegraphics[scale=0.24]{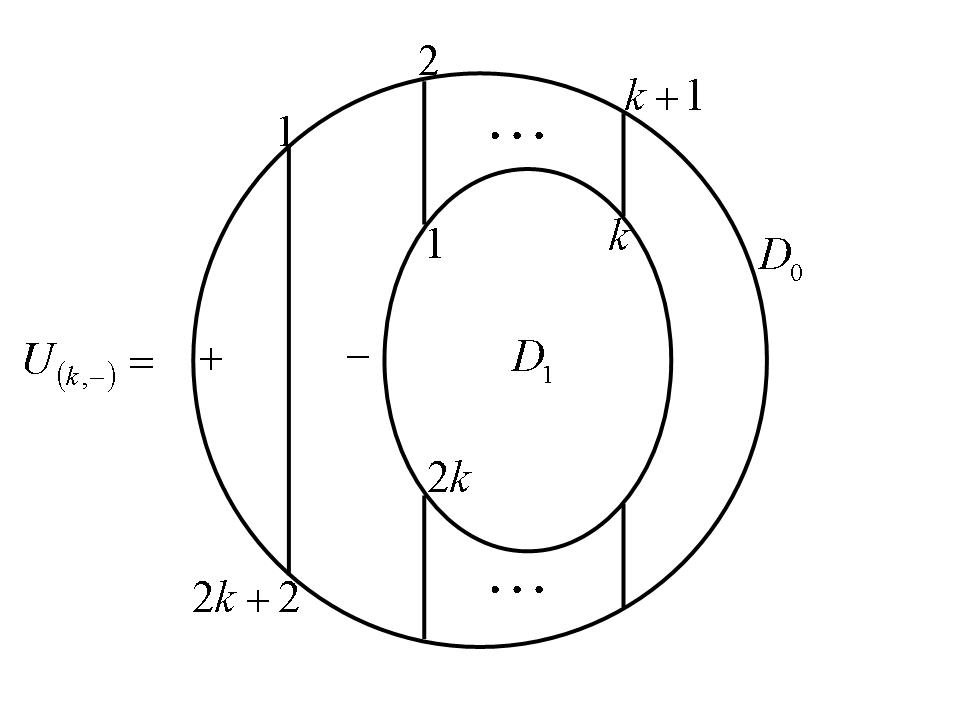}
\caption{}
\label{leftcapcup}
\end{figure}
Note that $T^\prime$ is a tangle with positive colors on each of its internal
discs. Set $P(T) = \delta_-^{-\abs{\{i \geq 1 \: : \:\varepsilon_i=-\}}} \;
Z_{T^\prime}|_{P(k_1,\varepsilon_1) \times \cdots \times 
P(k_n,\varepsilon_n)}$.

It is routine to check that $P$ preserves composition and identity. The 
definition of $P$ as a map of multicategories is motivated by Jones's 
definition of dual planar algebra (see \cite{pln alg}).
\end{remark}
If $T\in \mathcal{T}((k_{1},\varepsilon _{1}), \cdots ,
(k_{n},\varepsilon _{n}); (k,\varepsilon))$ (resp. $T\in
\mathcal{T}(\emptyset ;(k,\varepsilon ))$), then $T^{\ast } \in
\mathcal{T}((k_{1},\varepsilon _{1}),\cdots
,(k_{n},\varepsilon _{n});(k,\varepsilon ))$ (resp. $T^{\ast }\in \mathcal{T}
(\emptyset ;(k,\varepsilon ))$) is defined as the tangle obtained by
reflecting $T$ about any straight line not intersecting $T$, and the first
point of an internal (resp. external) disc of the reflected $T$ is taken to
be the reflected point of the last point of the corresponding internal
(resp. external) disc in $T$ such that the reflection preserves the colour
of each disc. For example, the $\ast$ of the tangle in Figure
\ref{egtang} is given by the tangle in Figure \ref{stareg}
\begin{figure}[h]
\includegraphics[scale=0.40]{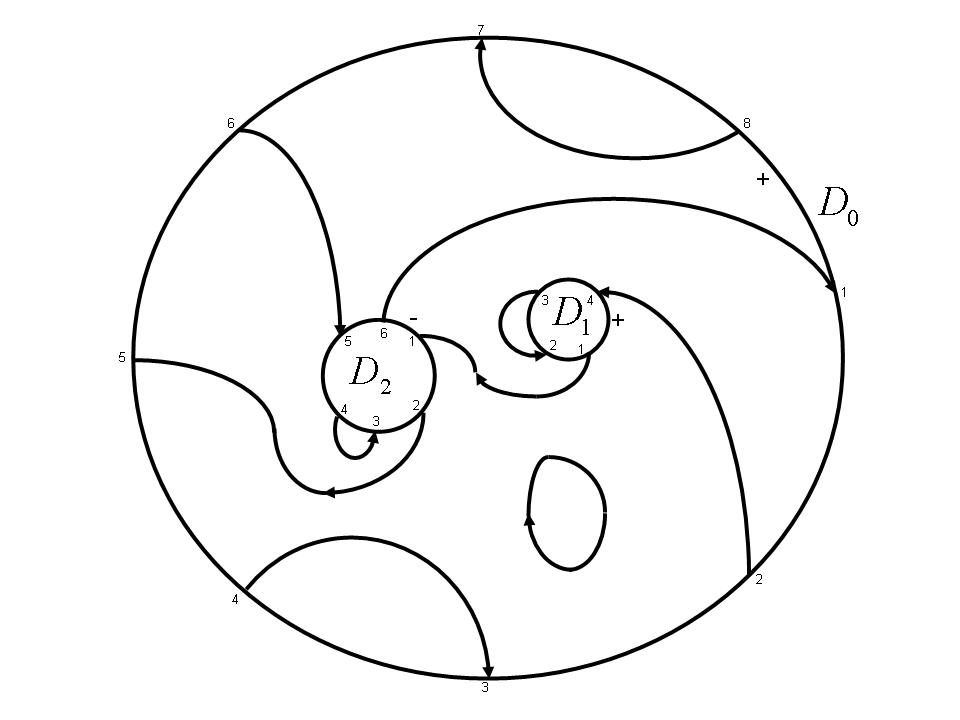}
\caption{}
\label{stareg}
\end{figure}
where we reflect the Figure \ref{egtang} about a vertical line. We
extend the map
$\mathcal{T}_{(k,\varepsilon )}\ni T\mapsto T^{\ast }\in \mathcal{T}
_{(k,\varepsilon )}$ conjugate linearly to $\ast :\mathcal{P}
_{(k,\varepsilon )}\rightarrow \mathcal{P}_{(k,\varepsilon )}$. It is clear
that $\ast $ is an involution. This makes $\left\{ \mathcal{P}
_{(k,\varepsilon )}\right\} _{k\in \mathbb{N}_{0}}$ into a unital filtered $
\ast $-algebra for $\varepsilon \in \{+,-\}$. $P$ is said to be a $\ast $
\textit{-planar algebra} (resp. $C^{\ast }$\textit{-planar algebra}) if $P$
is a planar algebra, $P(k,\varepsilon )$ is a $\ast $-algebra (resp. $
C^{\ast }$-algebra) for each colour $(k,\varepsilon )$ and the map $P$ is $
\ast $ preserving in the sense:\\
if $\theta \in \mathcal{P}((k_{1},\varepsilon _{1}),(k_{2},\varepsilon
_{2}),\cdots ,(k_{n},\varepsilon _{n});(k,\varepsilon ))$ (resp. $\mathcal{P}%
(\emptyset ;(k,\varepsilon ))$) and $f_{i}\in P(k_{i},\varepsilon _{i})$ for
$1\leq i\leq n$, then $P(\theta ^{\ast })(f_{1}^{\ast },\cdots ,f_{n}^{\ast
})=\left( P(\theta )(f_{1},\cdots ,f_{n})\right) ^{\ast }$.

A locally finite spherical $C^{\ast}$-planar algebra is called
\textit{subfactor-planar algebra}.
\begin{theorem}\label{jonthm}
(Jones) Any extremal subfactor with finite index gives rise to a
subfactor-planar algebra. Conversely, any subfactor planar algebra gives
rise to an extremal subfactor with finite index.
\end{theorem}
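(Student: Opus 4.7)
The plan is to handle the two directions separately. For the forward direction (subfactor to planar algebra), given an extremal finite-index inclusion $N\subset M$, I would first build the Jones tower $N\subset M\subset M_1\subset M_2\subset\cdots$ with Jones projections $e_1,e_2,\ldots$ and unique trace $\tau$, and set
\begin{equation*}
P(k,+)\;=\;N'\cap M_{k-1},\qquad P(k,-)\;=\;M'\cap M_k,
\end{equation*}
with the natural conventions in colour $0$. The modulus is forced to be $\delta_+=\delta_-=[M:N]^{1/2}$. To define the action $P(T)$ of a tangle $T$, I would present $\mathcal{P}$ by a finite set of generating tangles (multiplication, inclusion, the Jones-projection tangle, left/right conditional-expectation tangles, and rotation) modulo a list of relations, and assign to each generator the corresponding operation on the relative commutants: multiplication in $M_k$, the tower inclusion, multiplication by $\delta\, e_k$, the rescaled conditional expectation $\delta\, E_{M_{k-1}}$, and the Ocneanu rotation implemented by conjugation with a power of the shift together with a product of Jones projections.

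The main task is well-definedness: every planar relation among the generators must be matched with an algebraic identity in the tower. The Temperley--Lieb relations translate to the Jones relations $e_ie_{i\pm 1}e_i=\delta^{-2}e_i$ together with commutation of far-apart $e_i$'s; the pull-through relation for cups and caps corresponds to the bimodularity of $E_{M_{k-1}}$; invariance under spherical isotopy is exactly extremality, which says that the trace $\tau$ restricted to $P(k,+)$ matches (up to the $\delta^{2k}$ rescaling) the trace induced on $P(k,-)$. The $C^{\ast}$-structure is inherited from the ambient factors, positivity from faithfulness of $\tau$, local finiteness from $[M:N]<\infty$, and sphericality from extremality. One should also check that the star-tangle action agrees with the adjoint in $M_k$; this follows from the fact that reflecting a tangle is implemented by applying $\ast$ to every generator and reversing the order of composition.

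For the converse, starting from a subfactor planar algebra $P$, I would extract a standard $\lambda$-lattice (in the sense of Popa \cite{lambda lattice}) with $\lambda=\delta^{-2}$: the algebras $P(k,+)$ with the inclusion tangle form a tower of finite-dimensional $C^{\ast}$-algebras, the normalised conditional-expectation tangle gives a compatible Markov trace, and the Jones-projection tangle furnishes Temperley--Lieb generators whose commutation with $P(k-1,+)$ is an easy planar manipulation. Popa's reconstruction theorem then produces an extremal finite-index subfactor whose relative commutants realise the prescribed $\lambda$-lattice, and retracing the forward construction shows that the associated planar algebra is naturally isomorphic to $P$. The chief obstacle is the forward well-definedness, i.e.\ finding an explicit presentation of $\mathcal{P}$ by generators and relations and then verifying each relation inside the tower; the converse obstacle is effectively packaged into Popa's reconstruction, invoked here as a black box (alternatively, the Guionnet--Jones--Shlyakhtenko free-probabilistic construction could be used to produce the subfactor directly from $P$).
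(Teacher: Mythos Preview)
Your proposal is correct and matches the approach the paper itself attributes to the literature: the paper does not give an independent proof of this theorem but simply records that Jones established the forward direction in \cite{pln alg} by prescribing the tangle action on the standard invariant, and that the converse goes through Popa's $\lambda$-lattice reconstruction \cite{lambda lattice} (with \cite{JSW} and \cite{KoSu} as later planar-algebraic alternatives). Your outline of the forward direction via a generators-and-relations presentation of $\mathcal{P}$ and verification of the planar relations in the tower is exactly Jones's strategy, and your converse via extracting a $\lambda$-lattice and invoking Popa is precisely what the paper cites; the alternative free-probabilistic construction you mention is the content of \cite{JSW}.
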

Jones proved the first part of Theorem \ref{jonthm}
(in \cite{pln alg}) by
prescribing an action of tangles on the standard invariant of a subfactor. 
However, Jones proved the converse using Popa's result on $\lambda$-lattices
(\cite{lambda lattice}). Very recently, another proof of the converse using 
planar algebra techniques appeared in \cite{JSW} and then in \cite{KoSu}.
\section{Planar Algebra arising from a Bicategory}
In this section, we will show how one can construct a planar algebra from a
$1$-cell of an abelian `pivotal' strict $2$-category with exactly two $0$-
cells.
The techniques used in this construction are motivated by Jones's construction
of planar algebra from a subfactor (in \cite{pln alg}).
\subsection{Construction of the Planar Algebra}
Before we proceed towards the construction, we will first state or deduce
some useful results and set up some notations.
\begin{definition}
A bicategory $\mathcal{B}$ is called \textit{pivotal} if $\mathcal{B}$ is
rigid and there exists a weak transformation $a:id_{\mathcal{B}}\rightarrow
\ast \ast $ such that $a_{\varepsilon }=1_{\varepsilon }\in ob(\mathcal{B}%
(\varepsilon ,\varepsilon ))$ for all $\varepsilon \in \mathcal{B}_{0}$,
where $\ast =(\ast ,K)$ is the right dual functor and $\ast \ast
=(\ast \ast ,J)$ is the weak functor $\ast \circ \ast $.
\end{definition}
From now on, we will consider only strict $2$-category instead of general
bicategories unless otherwise mentioned; however all results modified with
appropriate associativity and unit constraints, will hold even in the
absence of the `strict' assumption by the coherence theorem for
bicategories.

\begin{wrapfigure}{r}{40mm}
\begin{center}
\includegraphics[scale=0.175]{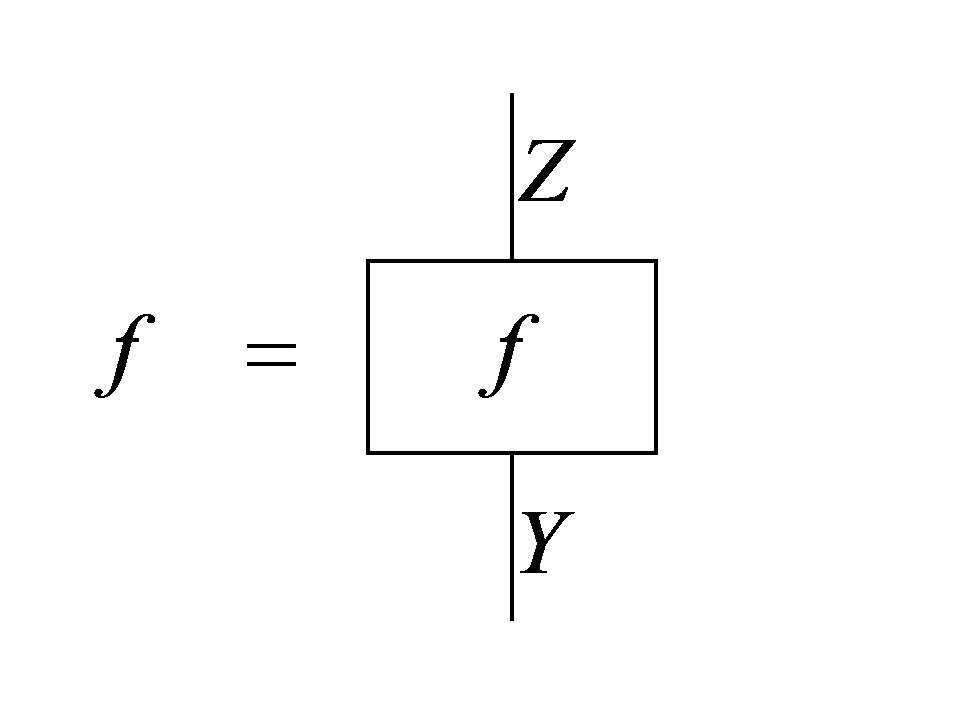}
\end{center}
\end{wrapfigure}
We next set up some pictorial notation to denote $2$-cells
which is analogous to the graphical calculus of morphisms in a tensor
category (see \cite{Kassel}, \cite{BaKi}). Let $\mathcal{B}$ be a pivotal
strict $2$-category as defined above. We denote a $2$-cell $f:Y\rightarrow Z$
by a rectangle labelled with $f$, placed on $\mathbb{R}^{2}$ so that one of
the sides is parallel to the $X$-axis and a vertical line segment labelled
with $Y$ (resp. $Z$) is attached to the top (resp. bottom) side of the
rectangle. Sometimes we will not label
the strings attached to a recatangle labelled with a $2$-cell; the $2$-cell
itself will induce the obvious labelling to the strings.

We list below
pictorial notations of several other $2$-cells which will be the main
constituents of the construction without describing them meticulously in
words like the way we described $f$ above.
\begin{center}
\includegraphics[scale=0.175]{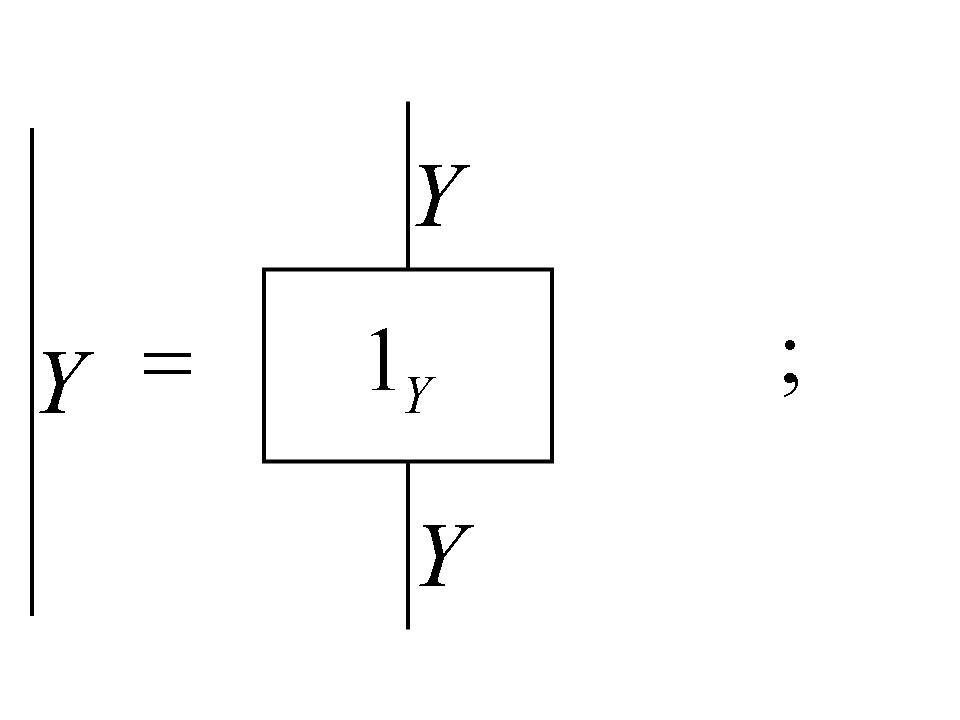}
\includegraphics[scale=0.175]{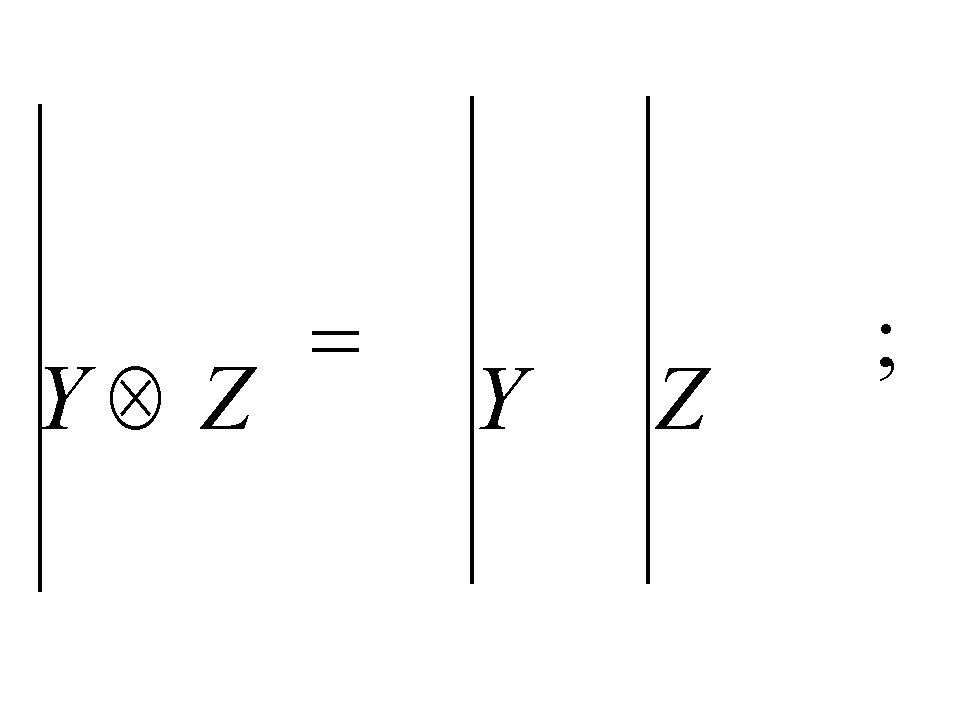}
\includegraphics[scale=0.175]{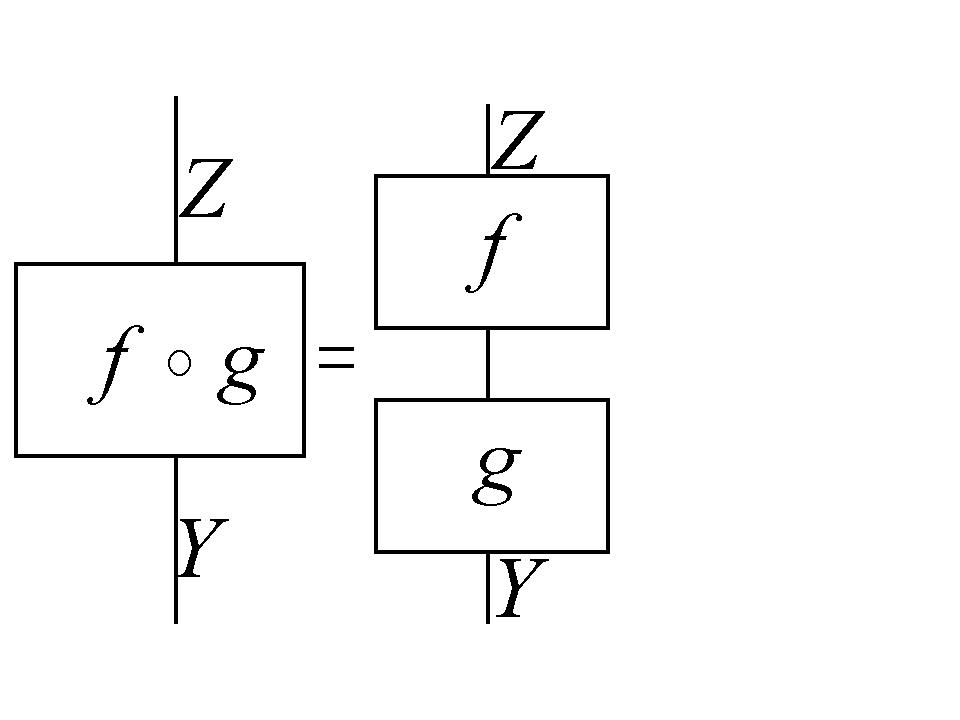}
\end{center}
\noindent where $Y$, $Z$ are $1$-cells and $f$, $g$ are
$2$-cells. To each local maximum or minimum of a string with an orientation
marked at the maximum or minimum and labelled with a $1$-cell $Y\in \mathcal{B}
(A,B)$, we associate a $2$-cell in the following way:
\begin{center}
\includegraphics[scale=0.35]{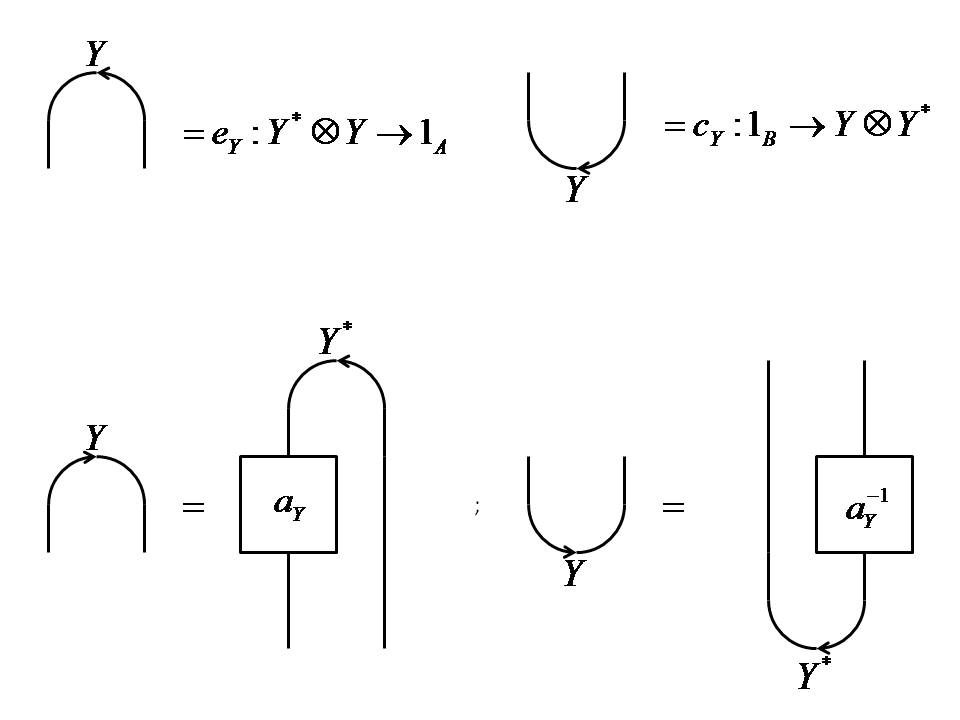}
\end{center}
We will next exhibit some easy consequences in terms of the pictorial notation.
\begin{lemma}\label{Basic Lemma}(i) For any $1$-cell $Y$,
\includegraphics[scale=0.2,bb= 0 250 750 500]
{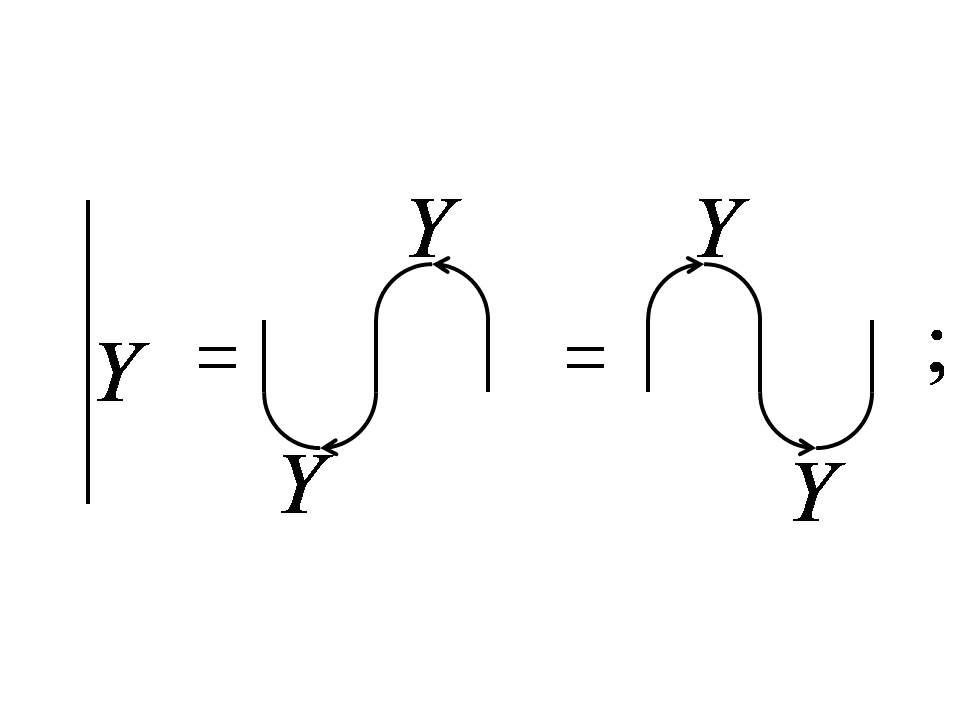}
\includegraphics[scale=0.2,bb= 0 250 750 500]
{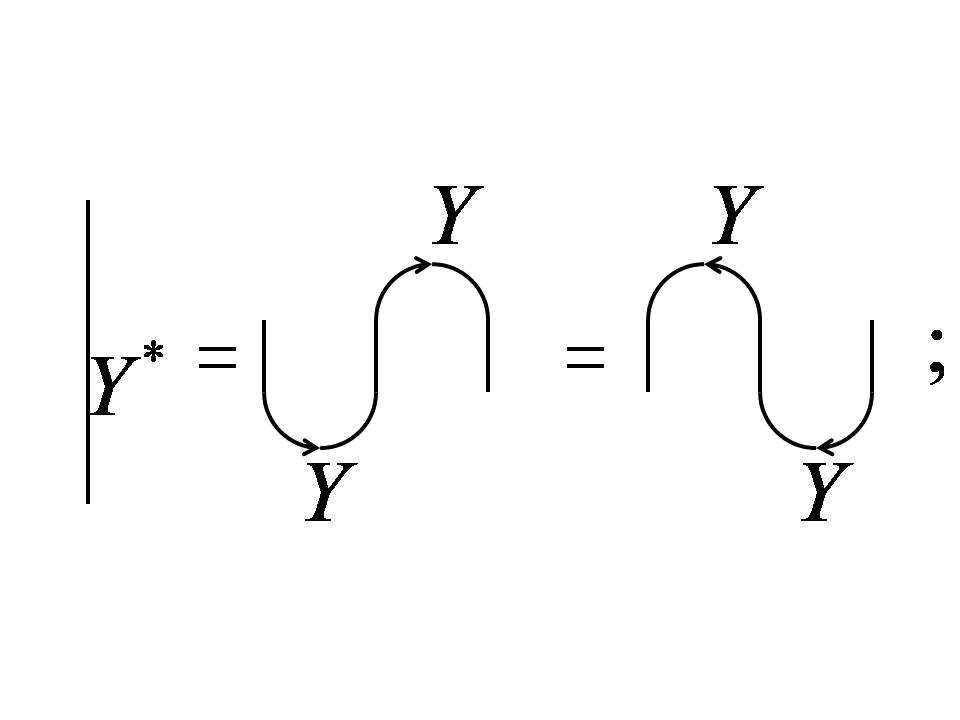}
\vskip 1.6cm
\noindent(ii) for any $2$-cell $f:Y\rightarrow Z$,
\includegraphics[scale=0.2,bb= 0 250 750 550]
{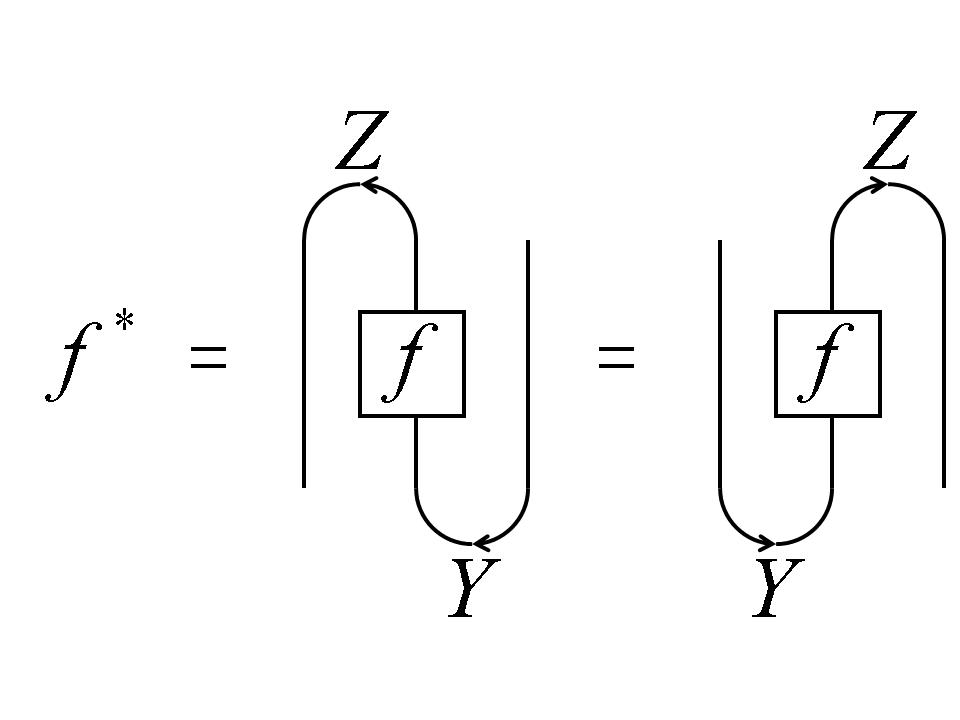} and
\includegraphics[scale=0.2,bb= 0 250 750 550]
{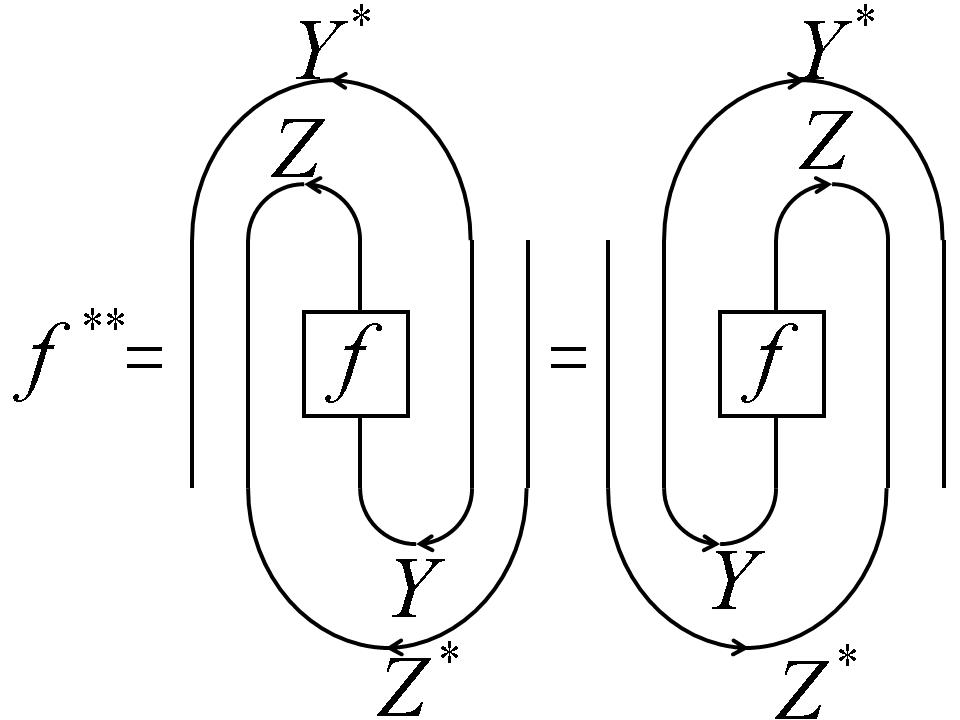}
\vskip 1.6cm
\noindent(iii) $K_{Y,Z} =$
\includegraphics[scale=0.2,bb= 0 250 750 550]
{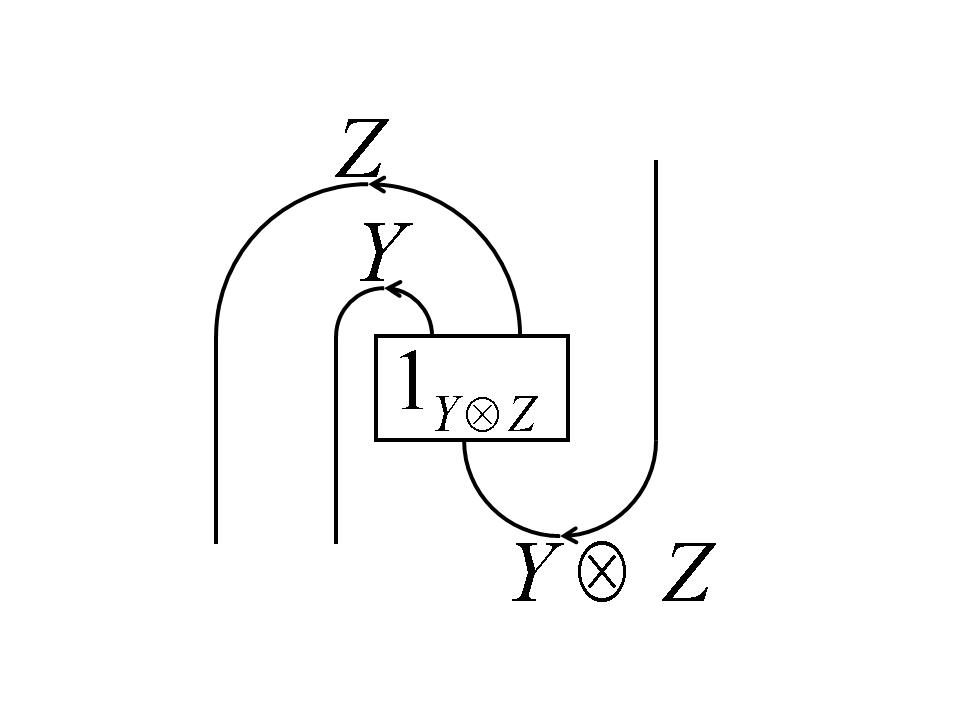} and $K_{Y,Z}^{-1}=$
\includegraphics[scale=0.2,bb= 0 250 750 550]
{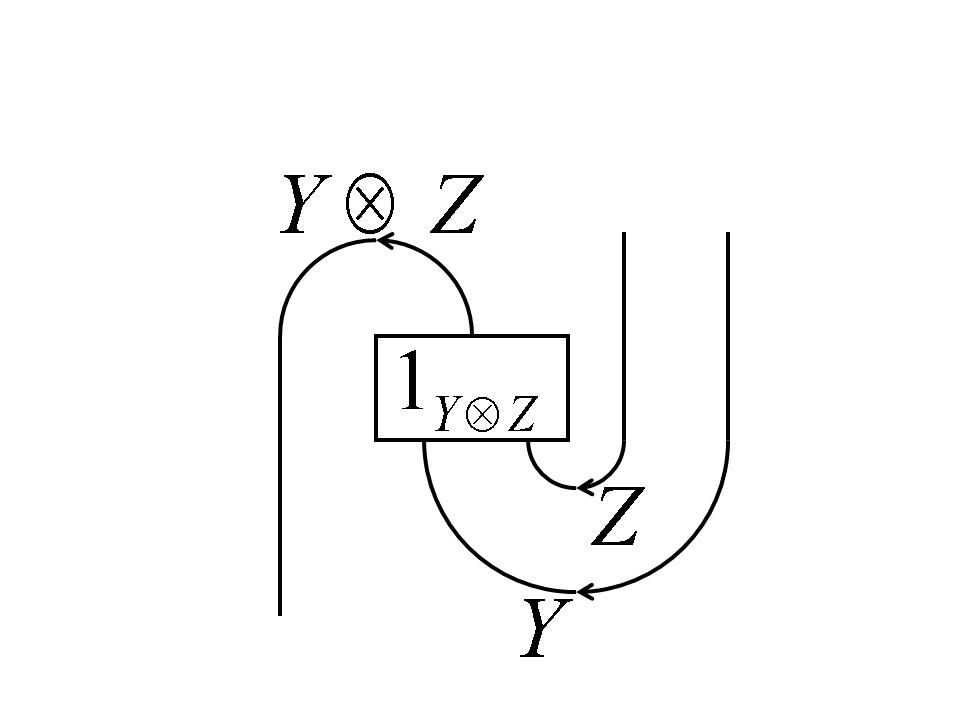}
\vskip 1.6cm
\noindent(iv) $J_{Y,Z}=$
\includegraphics[scale=0.2,bb= 0 250 750 550]
{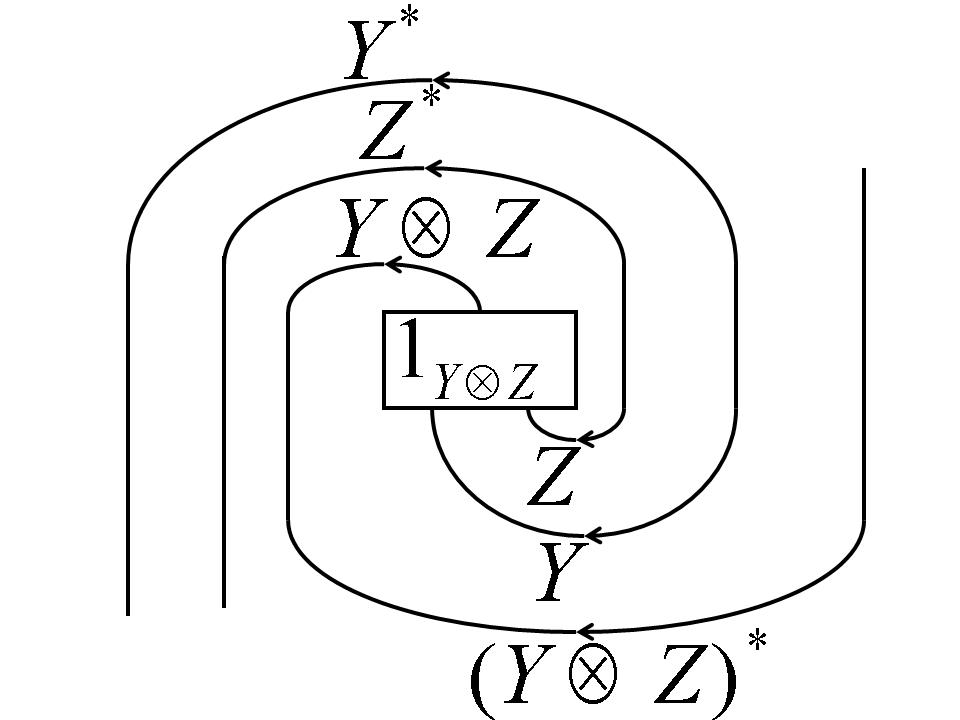}
and $J_{Y,Z}^{-1}=$
\includegraphics[scale=0.2,bb= 0 250 750 550]
{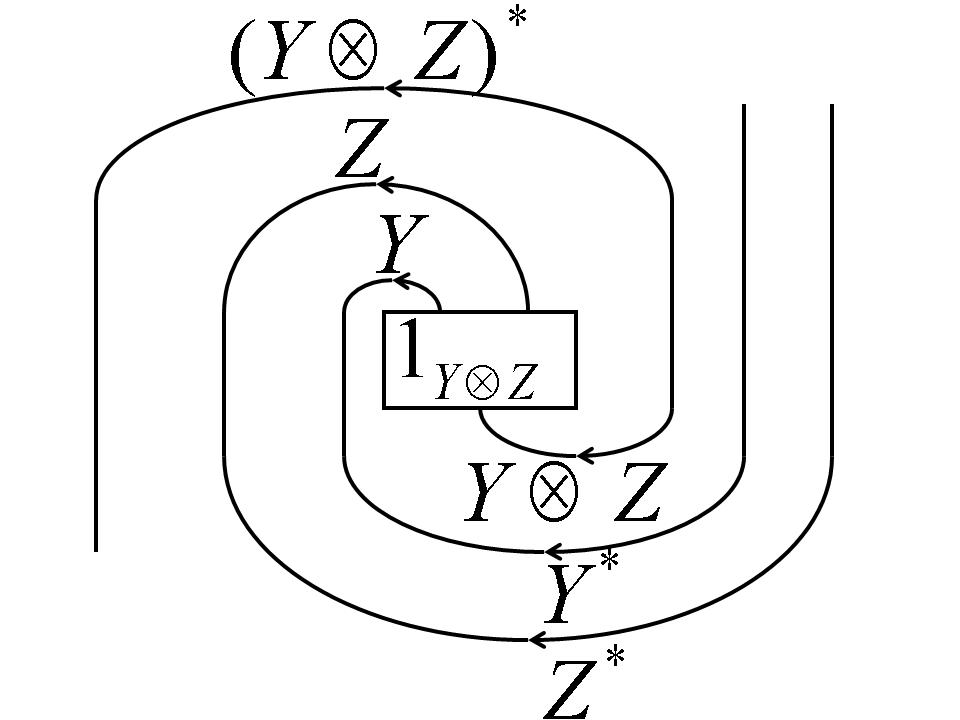}
\vskip 1.6cm
\noindent(v) $J_{Y,Z}\circ (a_{Y}\otimes a_{Z})=a_{Y\otimes Z}$ for all $1$-
cells $Y$
and $Z$.
\end{lemma}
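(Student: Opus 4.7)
The plan is to prove each of the five parts by translating the algebraic definitions set up in the preceding subsection directly into the graphical calculus, and then invoking only the triangle identities for a right dual (together, for (v), with the weak transformation axioms). I would work through the parts roughly in the order (i), (ii), (iii), (iv), (v), since each is used in the next.

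For (i), both sides of each claimed equation represent, by the pictorial conventions assigning $e_Y$ to a local maximum and $c_Y$ to a local minimum, the compositions $(1_Y \otimes e_Y) \circ (c_Y \otimes 1_Y)$ and $(e_Y \otimes 1_{Y^*}) \circ (1_{Y^*} \otimes c_Y)$. By the defining equations of the right dual $(Y^*, e_Y, c_Y)$ stated at the end of Section 2.2, these collapse to $1_Y$ and $1_{Y^*}$, respectively, which is exactly what the two pictures on the right-hand sides depict. For (ii), the explicit formula
\[
\gamma^* = \lambda_{f^*} \circ (e_g \otimes 1_{f^*}) \circ (1_{g^*} \otimes \gamma \otimes 1_{f^*}) \circ (1_{g^*} \otimes c_f) \circ \rho_{g^*}^{-1}
\]
given in the definition of the right dual functor $*$ already decomposes $\gamma^*$ into a box $\gamma$ flanked by a coevaluation at the top and an evaluation at the bottom; reading this in graphical notation yields the first picture verbatim. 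For the double dual, apply this rule twice and use (i) to cancel the interior snake.

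For (iii), the formula for $\varphi_{Y,Z} = K_{Y,Z}$ was spelled out in Section 2.2 as the composition of a coevaluation at $g \otimes f$, a middle evaluation at $g$, and a final evaluation at $f$; this reads immediately as the claimed picture. The formula for $K^{-1}_{Y,Z}$ is constructed symmetrically (with $e$ and $c$ interchanged in the natural way) and is checked by composing with $K_{Y,Z}$ and applying (i) to straighten the two interlocking snakes. For (iv), use that $J$ is the coherence isomorphism of the composite weak functor $* \circ *$, which by functoriality of composition is built out of two successive applications of $K$ together with $*$; expanding each $K$ via (iii) and applying (ii) to pass strings through the pivotal ``second'' dual produces the claimed picture after simplification by the snake identities.

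Finally, for (v), I would unpack the first hexagonal commutative diagram in the definition of a weak transformation, taking $\sigma = a$, $F = \mathrm{id}_{\mathcal{B}}$ (so $\varphi = 1$), and $G = **$ (so $\psi = J$); the diagram collapses to $J_{Y,Z} \circ (a_Y \otimes a_Z) = a_{Y \otimes Z}$, using that all unit constraints in a strict $2$-category are identities. The main obstacle, as I see it, will lie in (iv): chasing two successive applications of the duality functor across a tensor product involves enough bookkeeping with cups, caps, and the orientation conventions fixed at the start of the subsection that one must be careful the resulting tangle can be isotoped to the asserted picture using only the snake identities and naturality of $\gamma \mapsto \gamma^*$, rather than secretly invoking the pivotal structure $a$ (which would be circular, since (v) has not yet been established at that stage).
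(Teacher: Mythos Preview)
Your overall strategy---translate each definition into the graphical calculus and read it off---is exactly what the paper does, and your treatments of (iii), (iv), and (v) are essentially correct. But you have the role of the pivotal structure $a$ backwards, and this creates a real gap in (i) and (ii).

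In the conventions set up just before the lemma, cups and caps come in \emph{two} orientations. One orientation is the raw right-dual data $e_Y$, $c_Y$; the opposite orientation (the cap $Y\otimes Y^*\to 1$ and the cup $1\to Y^*\otimes Y$) is \emph{defined} using the pivotal isomorphism $a_Y:Y\to Y^{**}$. Part (i) therefore asserts zig-zag identities for \emph{both} orientations: the first pair is indeed the right-dual axiom, but the second pair only straightens because $a_Y$ is invertible---which is precisely what the paper's proof invokes. Your argument for (i) covers only the unadorned right-dual zig-zags and so proves only half the statement. The same issue recurs in (ii): the first picture there expresses $f^*$ via the $a$-built cups and caps, and matching it with the algebraic formula for $\gamma^*$ requires naturality of $a$, not merely the definition. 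Your proposed argument omits this step.

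Conversely, your closing worry---that (iv) might secretly smuggle in the pivotal structure---is misplaced. The paper's Remark immediately after the lemma states explicitly that (iii) and (iv) follow from the definitions of the weak functors $\ast$ and $\ast\ast$ alone, with no appeal to $a$. So there is no circularity to fear in (iv); the pivotal structure is needed earlier, in (i) and (ii), and legitimately so, since $a$ is baked into the very meaning of the oriented cups and caps.
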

\begin{proof}
(i) follows from the definition of $Y^{\ast }$ being the right dual of $Y$
and $a$ being invertible.

First part of (ii) follows from the definition of $f^{\ast }$ and naturality
of $a$ and the second part easily follows from the first one.

(iii) and (iv) follow from the way the weak functors $\ast $ and $\ast \ast $
are defined.

Definition of the pivotal structure $a$ implies (v).
\end{proof}
\begin{remark}
Parts (iii) and (iv) of the above lemma do not use the pivotal structure $a$
at all. However, with the help of pivotal structure, especially part (v)\ of
the above lemma, one may also prove the following:\\
$K_{Y,Z}\qquad =$
\includegraphics[scale=0.2,bb= 0 250 750 550]
{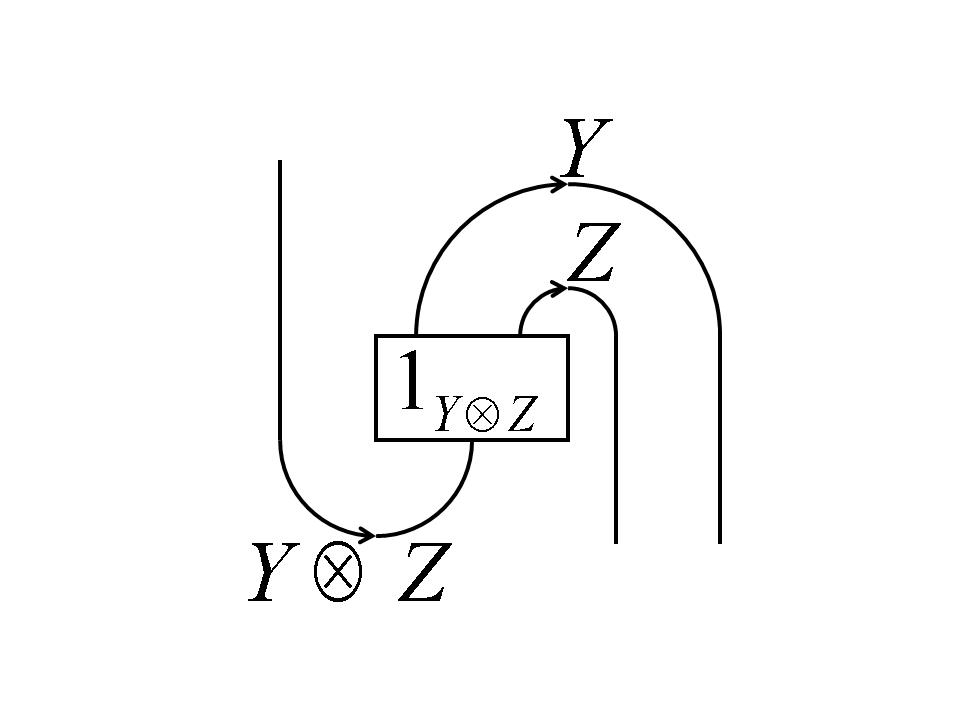}
and $J_{Y,Z}=$
\includegraphics[scale=0.2,bb= 0 250 750 550]
{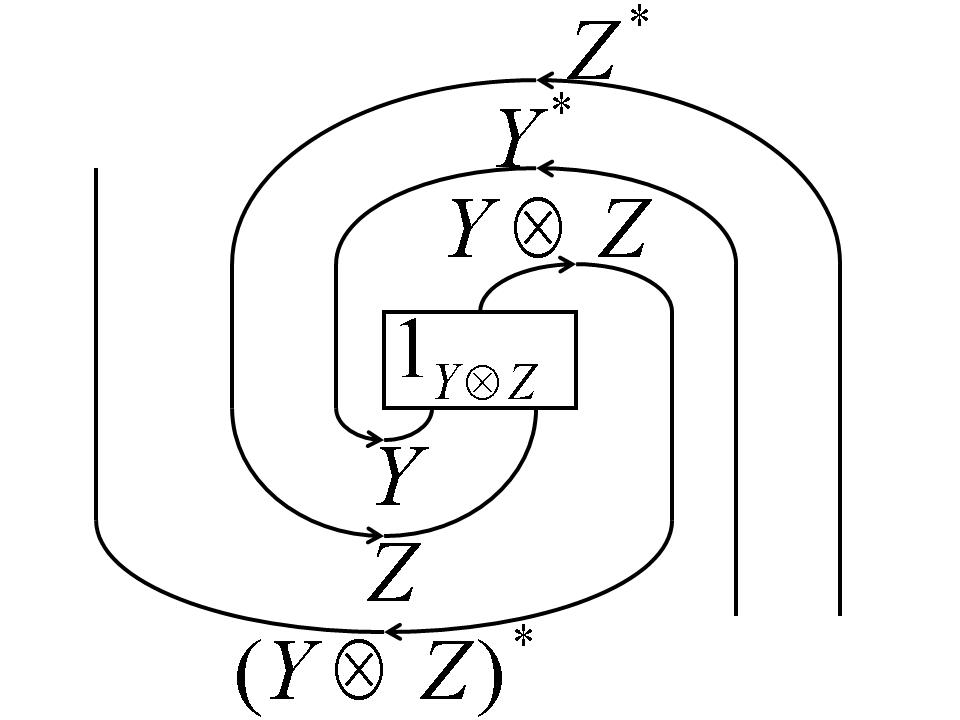}
\end{remark}
\vskip 1.6cm
Using the above graphical calculus, we immediately obtain the following relation
which will be useful later.
\begin{corollary}\label{ax*}
$a^{-1}_{X^{\ast}} = a^*_{X}$ for all $1$-cell $X$.
\end{corollary}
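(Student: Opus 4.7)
The plan is to work entirely within the graphical calculus for $2$-cells established above. Both $a^{-1}_{X^*}$ and $a^*_X$ are $2$-cells $X^{***} \to X^*$, so it suffices to verify $a^*_X \circ a_{X^*} = 1_{X^*}$.

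First, I would unfold $a^*_X$ using the definition of the contravariant functor $\ast$ on $2$-cells: since $a_X : X \to X^{**}$, one has
$a^*_X = (e_{X^{**}} \otimes 1_{X^*}) \circ (1_{X^{***}} \otimes a_X \otimes 1_{X^*}) \circ (1_{X^{***}} \otimes c_X)$
(the unit constraints are identities because $\mathcal{B}$ is a strict $2$-category). Pictorially this is a cup-cap sandwich in which $a_X$ sits on the middle strand.

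The key input is Lemma \ref{Basic Lemma}(v), $a_{X \otimes X^*} = J_{X,X^*} \circ (a_X \otimes a_{X^*})$, together with the naturality of the pivotal transformation $a$ applied to the coevaluation $c_X : 1_A \to X \otimes X^*$. Since $a_{1_A} = 1_{1_A}$, naturality gives $a_{X \otimes X^*} \circ c_X = (c_X)^{**}$, hence $J_{X,X^*} \circ (a_X \otimes a_{X^*}) \circ c_X = (c_X)^{**}$. Substituting this relation into the picture for $a^*_X \circ a_{X^*}$ lets one pull the $a_X$-box across the cup at the price of introducing $a_{X^*}$ on the opposite strand and a $J_{X,X^*}^{-1}$ factor; by parts (iii)--(iv) of Lemma \ref{Basic Lemma}, the $J_{X,X^*}^{-1}$ together with $(c_X)^{**}$ is itself a simple configuration of caps, cups and (co)evaluations involving $X^*$.

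After this substitution the composite $a_X \circ a_X^{-1}$ on the middle strand simplifies to the identity, and the remaining diagram is resolved by the zig-zag (rigidity) identities for $e_X$ and $c_X$. The picture collapses by isotopy to $1_{X^*}$, whence $a^*_X = a^{-1}_{X^*}$. The main obstacle is the bookkeeping inside the graphical calculus: one must carefully track the $J_{Y,Z}$ and $K_{Y,Z}$ isomorphisms introduced by the weak functoriality of $\ast$ and $\ast\ast$ and verify that they cancel against the rigidity relations. Once the diagram is arranged correctly, the equality becomes a routine isotopy argument.
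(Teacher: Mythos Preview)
Your overall strategy---unfold $a_X^{\ast}$, use the monoidality relation from Lemma~\ref{Basic Lemma}(v) together with naturality of $a$, and then close up with a zig-zag---is the right one and is essentially what the paper's one-line ``using the above graphical calculus'' stands for. However, the way you have written the middle of the argument does not work.

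After you substitute $(a_X\otimes 1_{X^{\ast}})\circ c_X=(1_{X^{\ast\ast}}\otimes a_{X^{\ast}}^{-1})\circ J_{X,X^{\ast}}^{-1}\circ (c_X)^{\ast\ast}$ into the picture for $a_X^{\ast}\circ a_{X^{\ast}}$, there is no ``composite $a_X\circ a_X^{-1}$ on the middle strand'': the $a_X$-box has been removed entirely, and what you have introduced is an $a_{X^{\ast}}^{-1}$ on the \emph{rightmost} strand, while the original $a_{X^{\ast}}$ sits on the \emph{leftmost} strand. These lie on different strands and do not cancel. So the step you describe as ``simplifies to the identity'' is simply false as stated.

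The clean way to run your argument is to compute the other composite $a_{X^{\ast}}\circ a_X^{\ast}$ instead. Then the $a_{X^{\ast}}$ lands on the right strand emanating from $c_X$, and you obtain directly
\[
a_{X^{\ast}}\circ a_X^{\ast}
=(e_{X^{\ast\ast}}\otimes 1_{X^{\ast\ast\ast}})\circ\bigl(1_{X^{\ast\ast\ast}}\otimes[(a_X\otimes a_{X^{\ast}})\circ c_X]\bigr)
=(e_{X^{\ast\ast}}\otimes 1_{X^{\ast\ast\ast}})\circ\bigl(1_{X^{\ast\ast\ast}}\otimes J_{X,X^{\ast}}^{-1}\circ (c_X)^{\ast\ast}\bigr).
\]
Now the remaining step is the identity $J_{X,X^{\ast}}^{-1}\circ (c_X)^{\ast\ast}=c_{X^{\ast\ast}}$ (equivalently, $e_{X^{\ast\ast}}=(e_X)^{\ast\ast}\circ J_{X^{\ast},X}$), after which the zig-zag for $X^{\ast\ast}$ finishes the proof. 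You gesture at this with ``by parts (iii)--(iv) of Lemma~\ref{Basic Lemma}'', and indeed it does follow from the explicit graphical form of $J_{Y,Z}^{-1}$ given there, but you should state this identity explicitly rather than leaving it as ``a simple configuration of caps, cups and (co)evaluations''---it is the only nontrivial point in the whole argument.
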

\begin{proposition}\label{Rotation Invariance}
For any $2$-cell $f:Y_{1}\otimes \cdots \otimes
Y_{n}\rightarrow Z_{1}\otimes \cdots \otimes Z_{m}$, the following
identities hold:
\includegraphics[scale=0.3,bb= 0 250 750 550]
{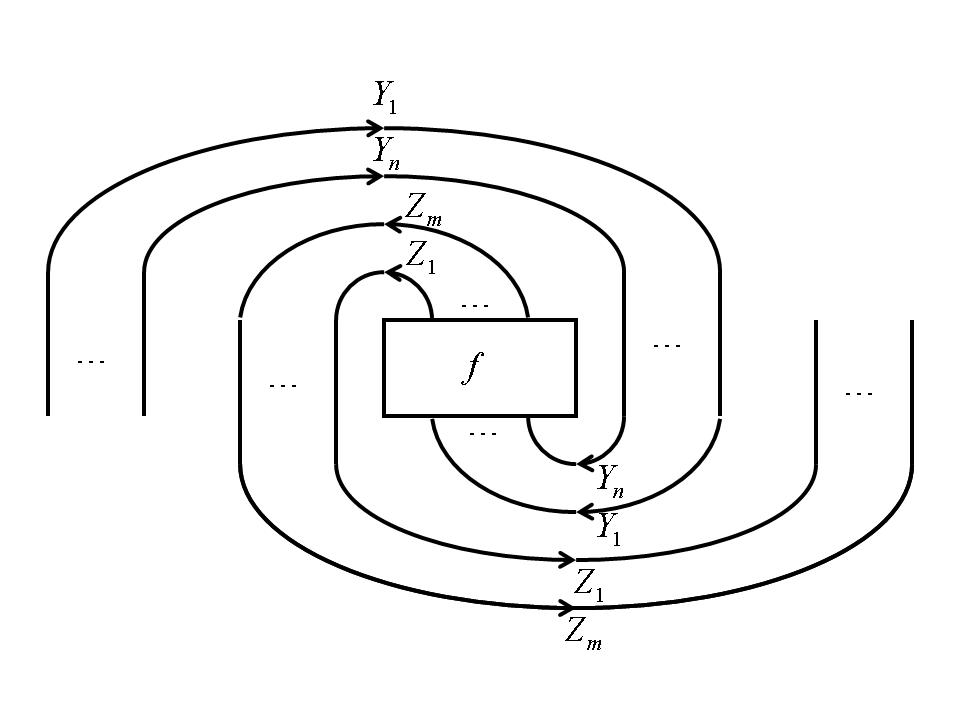}
$= \; f \; =$
\includegraphics[scale=0.3,bb= 0 250 750 550]
{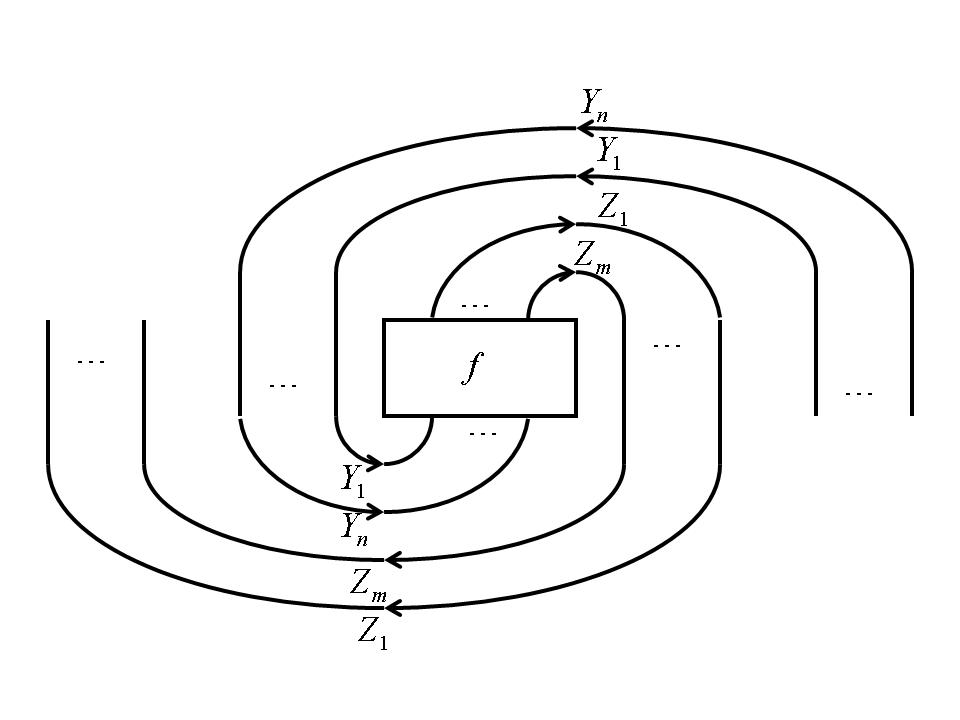}
\end{proposition}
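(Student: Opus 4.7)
The plan is to verify both pictorial identities by unpacking them into explicit composites and reducing each to $f$ using the tools assembled in Lemma \ref{Basic Lemma}. Each rotated picture represents $f$ sandwiched between bent-string operators built from evaluations, coevaluations, instances of the pivotal $2$-cell $a$, and the weak-functor coherence data $K$, $J$; the proposition amounts to saying that all of this surrounding data collapses to the identity, leaving only the label $f$ in the middle.

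First I would reduce to the single-string case $n = m = 1$. Iterating part (v) of Lemma \ref{Basic Lemma}, together with parts (iii) and (iv) that identify $K$ and $J$ as the specific crossing-type $2$-cells visible in the rotated pictures, allows one to split $a_{Y_1 \otimes \cdots \otimes Y_n}$ into $a_{Y_1} \otimes \cdots \otimes a_{Y_n}$ at the expense of introducing crossings that are exactly the $K$'s and $J$'s already drawn in the picture. Performing this splitting on both the $Y$-side and the $Z$-side cancels the accumulated coherence data and decouples the diagram into a parallel array of single-string rotations acting on $f$.

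For a single morphism $f : Y \to Z$, the rotated picture amounts, after straightening the two resulting snakes using part (i) of Lemma \ref{Basic Lemma}, to the composite $a_Z^{-1} \circ f^{**} \circ a_Y$. Naturality of the weak transformation $a : \mathrm{id}_{\mathcal{B}} \to **$ gives the square $a_Z \circ f = f^{**} \circ a_Y$, so this composite collapses to $f$ as required; part (ii) of Lemma \ref{Basic Lemma} identifies the $f^{**}$ sitting between the two snakes as the one produced by the definition of $**$. The second identity is handled in the mirror-image way, the only additional ingredient being Corollary \ref{ax*}, used to rewrite $a^{-1}_{X^*}$ as $a_X^{*}$ whenever a bend involves a dual object.

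The main obstacle is bookkeeping rather than any isolated clever step: the pictures encode a lengthy composite of $K$, $J$, $a$, $e$, $c$, and their inverses, and one must track every local move while carefully distinguishing left-hand bends from right-hand bends, as well as $a$-instances on a composite $1$-cell versus on its tensor factors. The strict $2$-category assumption removes all associator and unitor juggling, so after the tensor decomposition above the proof reduces to a sequence of snake straightenings together with a single naturality application for $a$; this makes the argument essentially mechanical once the correct word of generators has been extracted from each picture.
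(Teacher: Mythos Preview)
Your proposal is essentially the paper's proof: handle the case $n=m=1$ by naturality of $a$ (giving $a_Z^{-1}\circ f^{**}\circ a_Y = f$), and reduce the general case to it via Lemma~\ref{Basic Lemma}(v). One wording issue worth correcting: the rotated picture already carries \emph{individual} $a_{Y_i}$'s on each bent string, so the move is to \emph{merge} them into $a_{Y_1\otimes\cdots\otimes Y_n}$ (absorbing the $J$'s coming from the nested bends), not to split; after merging, the whole diagram \emph{is} the single-object rotation for $f$ viewed as a map $Y_1\otimes\cdots\otimes Y_n \to Z_1\otimes\cdots\otimes Z_m$, rather than a ``parallel array'' of independent one-string rotations (there is no such decoupling, since $f$ is indivisible). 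The paper also obtains the second identity from the first by applying the reverse rotation and Lemma~\ref{Basic Lemma}(i), whereas you propose doing it directly with Corollary~\ref{ax*}; either route works.
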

\vskip 2.4cm
\begin{proof}
It is enough to show one of the identities (because applying the reverse
rotation and using Lemma \ref{Basic Lemma} (i), one can deduce the other
identity). We will sketch the proof of the first identity.

For the case $m=n=1$, the result follows trivially from the naturality of $a$.

Suppose $n=2$. Then LHS of the first identity\\
\newpage \noindent$=$
\includegraphics[scale=0.3,bb= 0 250 750 550]
{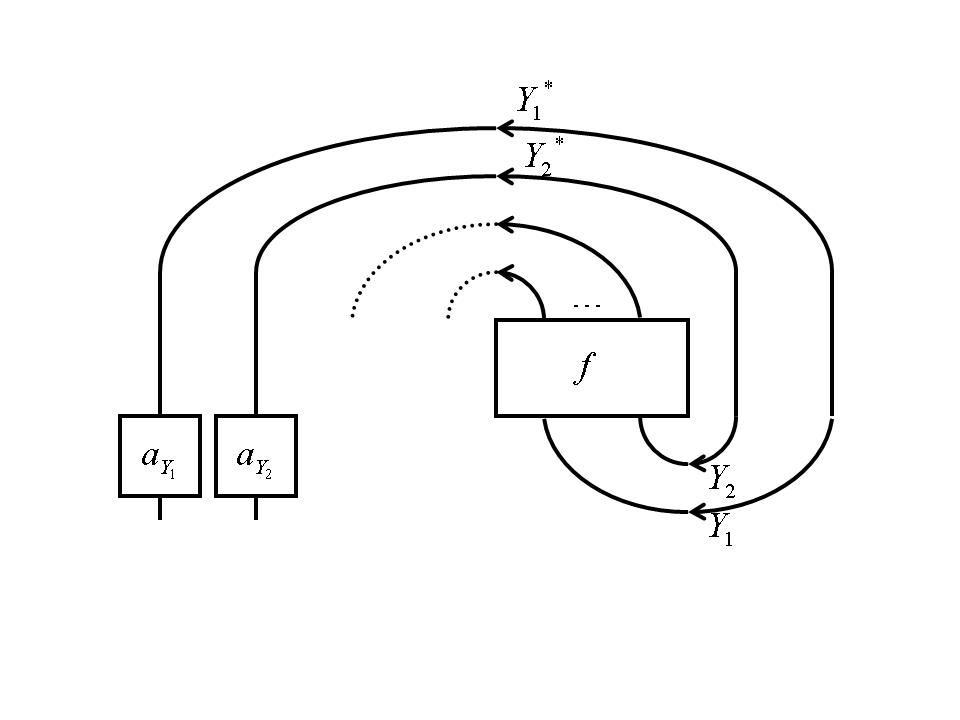}
$=$
\includegraphics[scale=0.3,bb= 0 250 750 550]
{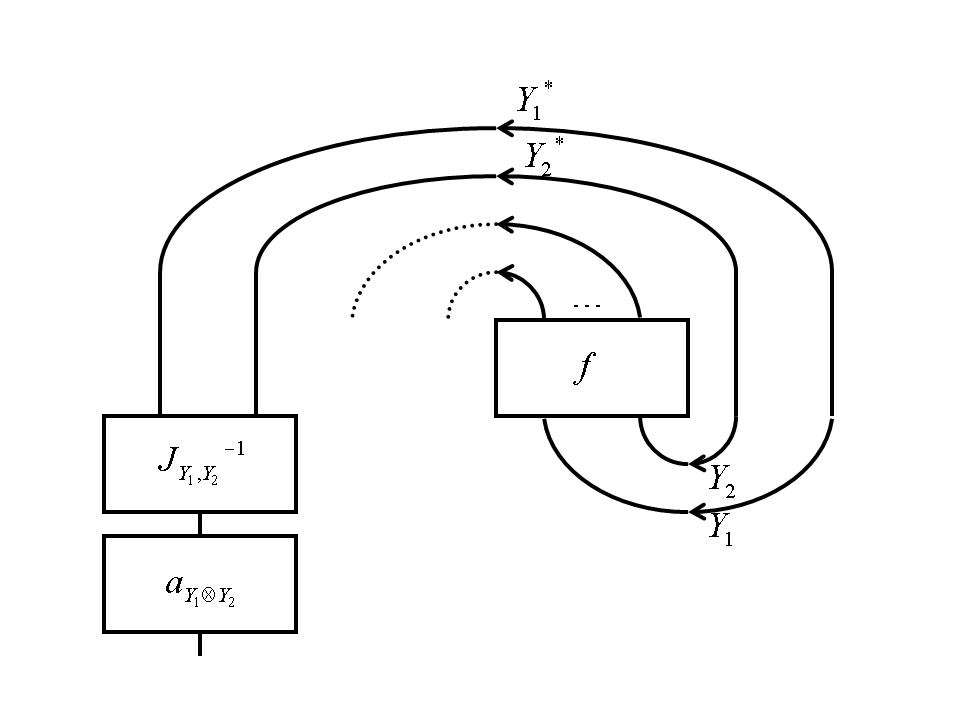}
\vskip 2.4cm
\noindent(using Lemma \ref{Basic Lemma} (v))\\
$=$
\includegraphics[scale=0.3,bb= 0 250 750 550]
{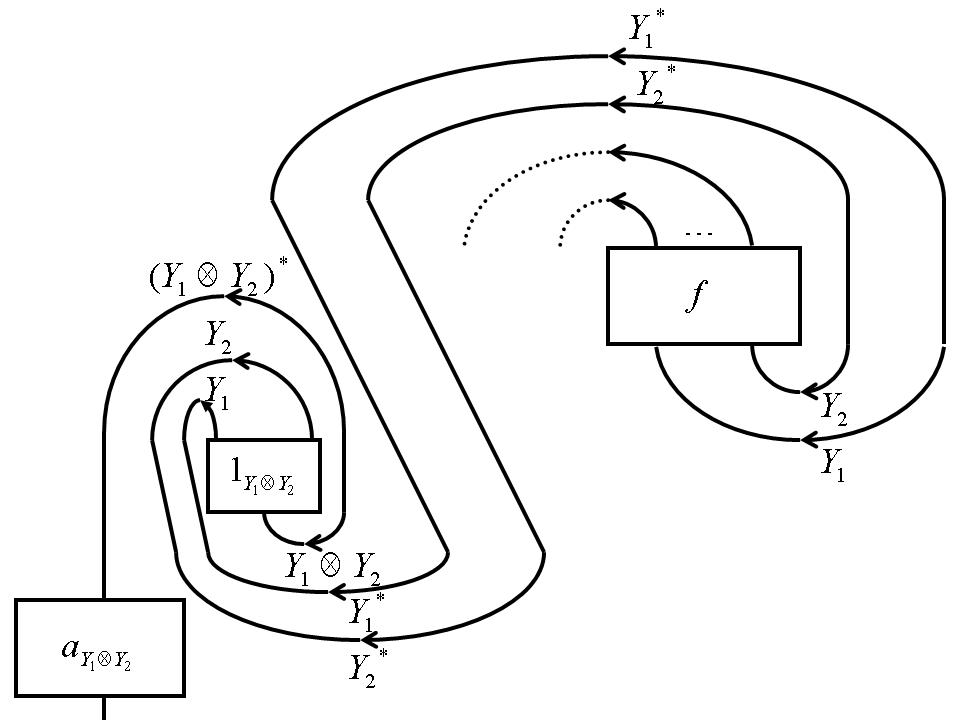}
$=$
\includegraphics[scale=0.3,bb= 0 250 750 550]
{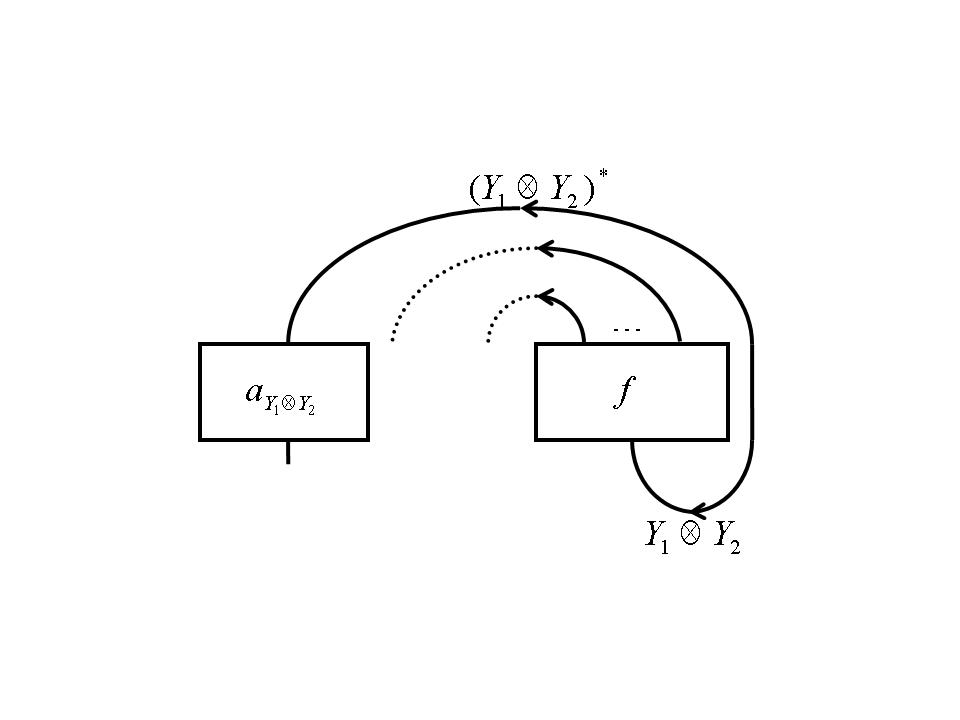}
\vskip 2.5cm
\noindent(using Lemma \ref{Basic Lemma} (i)).

For $n>2$, an analogous result (with $Y_1 \otimes Y_2$ replaced by
$Y_1 \otimes \cdots \otimes Y_n$) can be deduced by applying the above result
recursively. After working on the rest of the curves (emanating from the top of
the rectangle labelled with $f$) in the same way as above, the LHS of the first
identity\\
$=$
\includegraphics[scale=0.3,bb= 0 250 750 550]
{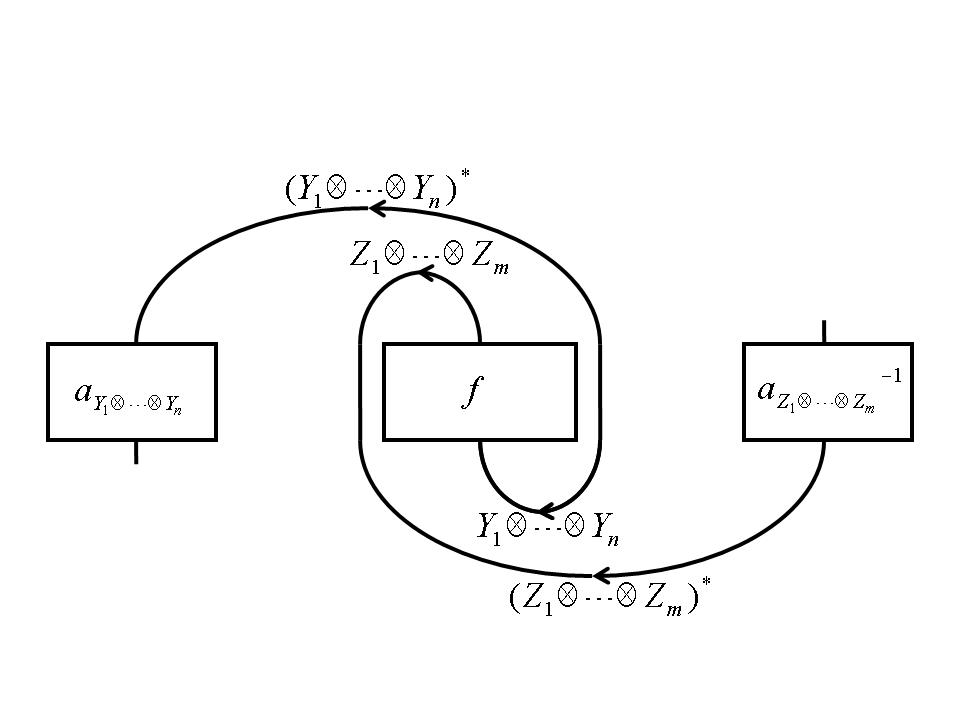}
\vskip 2cm
\noindent$=a_{Z_{1}\otimes \cdots \otimes Z_{m}}^{-1}\circ f^{\ast \ast }\circ
a_{Y_{1}\otimes \cdots \otimes Y_{n}}=f$ (using naturality of $a$).
\end{proof}
We now construct a planar algebra from a bicategory. Let $\mathcal{B}$ be
a pivotal $\mathbb{C}$-linear strict $2$-category with $\{+,-\}$ as the set of $0$-cells
and fix $X\in ob(\mathcal{B}(-,+))$. For each colour $(k,\varepsilon )$, set
\begin{equation*}
\begin{tabular}{ccc}
$X_{(k,\varepsilon )}$ & $=$ & $\left\{
\begin{tabular}{c}
$X\otimes X^{\ast }\otimes X\otimes X^{\ast }\otimes X\otimes \cdots k$ many
tensor factors if $\varepsilon =+$, \\
$X^{\ast }\otimes X\otimes X^{\ast }\otimes X\otimes X^{\ast }\otimes \cdots
k$ many tensor factors if $\varepsilon =-$,%
\end{tabular}%
\right. $%
\end{tabular}%
\end{equation*}

if $k\geq 1$ and $X_{(0,\varepsilon )}=1_{\varepsilon }\in ob(\mathcal{B}%
(\varepsilon ,\varepsilon ))$. Define $P_{(k,\varepsilon
)}=End(X_{(k,\varepsilon )})$.

Now, for a $(k,\varepsilon )$-planar tangle $\theta \in \mathcal{T(}%
(k_{1},\varepsilon _{1}),(k_{2},\varepsilon _{2}),\cdots ,(k_{n},\varepsilon
_{n});(k,\varepsilon )),$ we wish to define a multilinear map $P(\theta
):P_{(k_{1},\varepsilon _{1})}\times \cdots \times P_{(k_{n},\varepsilon
_{n})}\rightarrow P_{(k,\varepsilon )}$. For this we extensively use the
graphical calculus of the $2$-cells of $\mathcal{B}$.

For the ease of dealing with $2$-cells replaced by labelled rectangles, we
will consider the planar tangle $\theta $ as an isotopy class of pictures
where each disc (internal or external) is replaced by a rectangle with
first half of the strings being attached to one of the side (called the
\textit{top side}) and the remaining half of the strings attached to the
opposite side (called the \textit{bottom side}). Next, in the isotopy class
of $\theta $, we fix a picture $\Theta $ placed on $\mathbb{R}^{2}$ with the
bottom side of the external rectangle being parallel to the $X$-axis,
satisfying the following properties:

\begin{itemize}
\item the collection of strings in $\Theta $ must have finitely many local
maximas and minimas,

\item each internal rectangle is aligned in such a way that the top side of
the external rectangle is parallel and also nearer to the top side of the
internal rectangle than its bottom side,

\item the projections of the maxima, minima and one of the vertical sides
of each internal rectangle (that is, the sides other than the top and bottom
ones) on the vertical sides of the external rectangle of $\Theta $ are
disjoint.
\end{itemize}

We will say that an element $\Theta $ in the isotopy class of $\theta $ is
in \textit{standard form} if $\Theta $ satisfies the above conditions\textit{%
.} For example, a standard form representation of the tangle in Figure
\ref{egtang} will be the following diagram
\begin{center}
\includegraphics[scale=0.3]{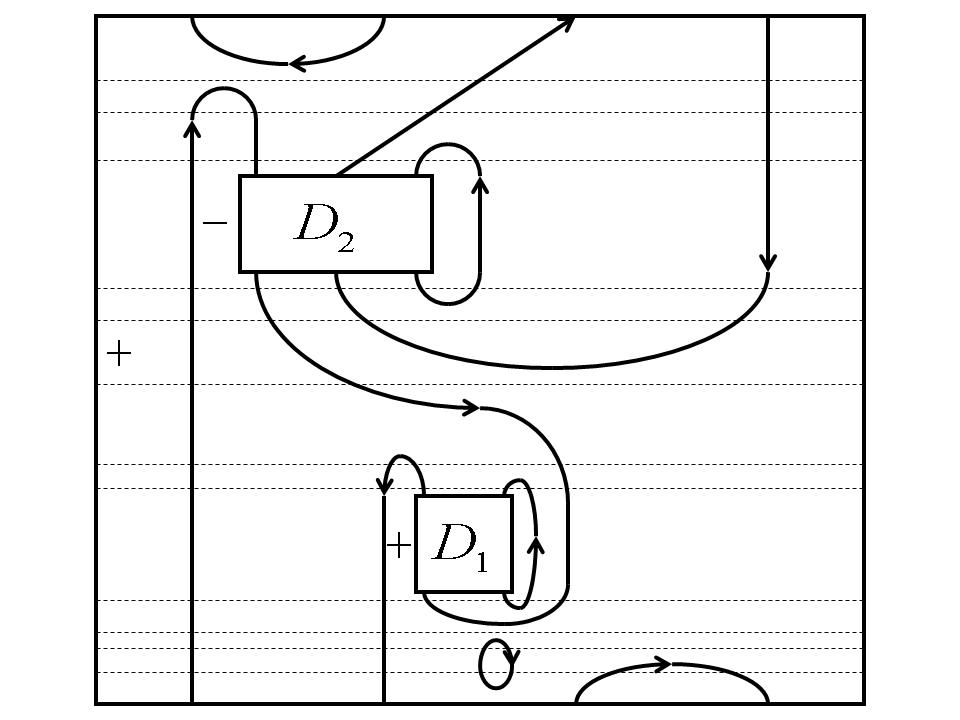}
\end{center}

Let $\Theta $ be an element in standard form of the isotopy class of $\theta
$. We now cut $\Theta $ into horizontal stripes so that every stripe should
have at most one local maxima, minima or internal rectangle. Each component
of every string in a horizontal stripe is labelled with $X$ or $X^{\ast }$
according as the orientation of the string is from the bottom side to the
top side of the horizontal stripe or reverse respectively; each local maxima
or minima is labelled with $X$ and the orientation is induced by the
orientation of the actual string in $\Theta $. For example,\\
\includegraphics[scale=0.24,bb= 0 250 750 550]
{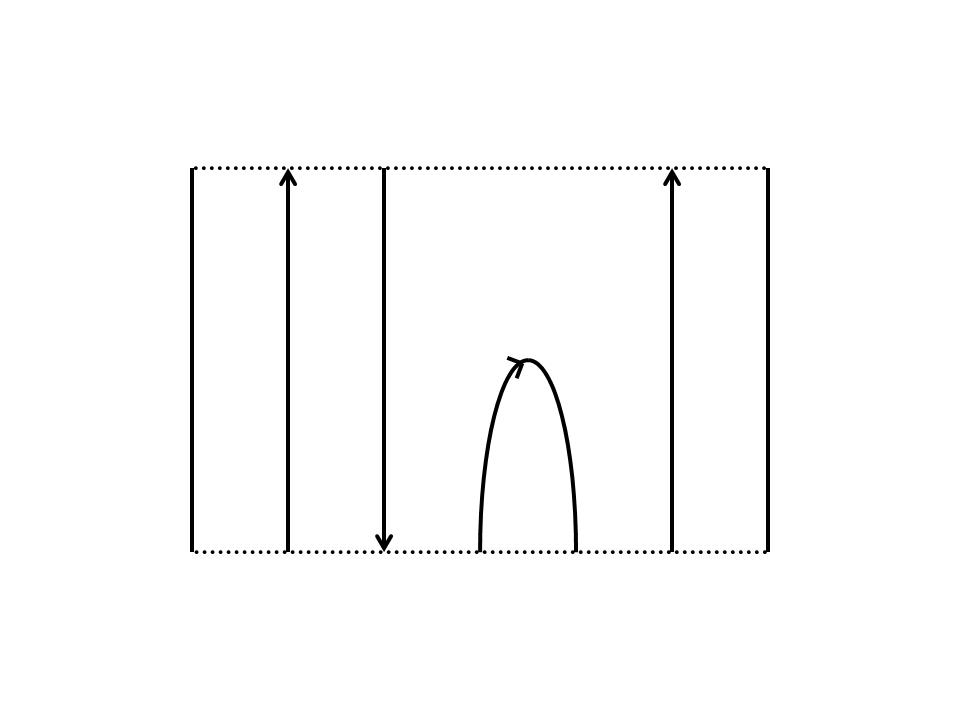}
will be replaced by
\includegraphics[scale=0.24,bb= 0 250 750 550]
{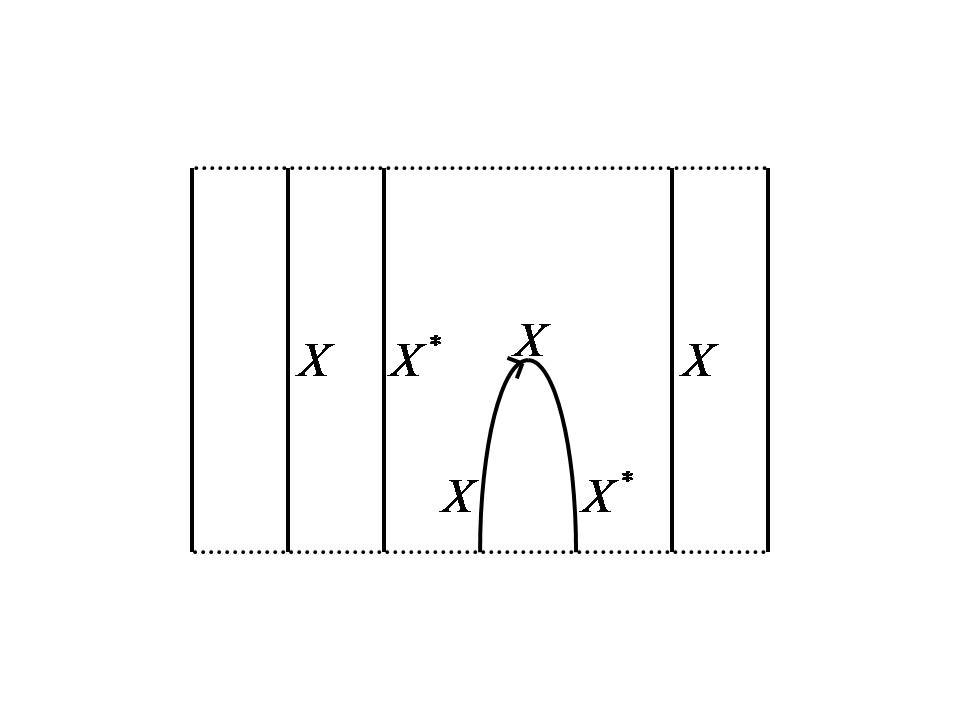}
\vskip 1.92cm
To define $P(\theta ):P_{(k_{1},\varepsilon _{1})}\times \cdots \times
P_{(k_{n},\varepsilon _{n})}\rightarrow P_{(k,\varepsilon )}$, we fix $2$%
-cells $f_{i}\in P_{(k,\varepsilon )}$ for $1\leq i\leq n$. We label the $i^{%
\text{th}}$ internal rectangle (contained in some horizontal stripe) with $%
f_{i}$. Now, each horizontal stripe makes sense as a $2$-cell according to
the notation already set up. We define $P(\theta )(f_{1},f_{2},\cdots
,f_{n}) $ as the composition of these $2$-cells. It is easy to easy that $%
P(\theta )$ is a multilinear map from $P_{(k_{1},\varepsilon _{1})}\times
\cdots \times P_{(k_{n},\varepsilon _{n})}$ to $P_{(k,\varepsilon )}$.
Natural question to ask will be why $P(\theta )(f_{1},f_{2},\cdots ,f_{n})$
is independent of the choice of $\Theta $ in the isotopy class of $\theta $.

For this, first note that one standard form representative of a tangle can be
obtained from another applying finitely many moves of the following three
types:

(i) \textit{Sliding Move:}
\includegraphics[scale=0.2,bb= 0 250 750 550]{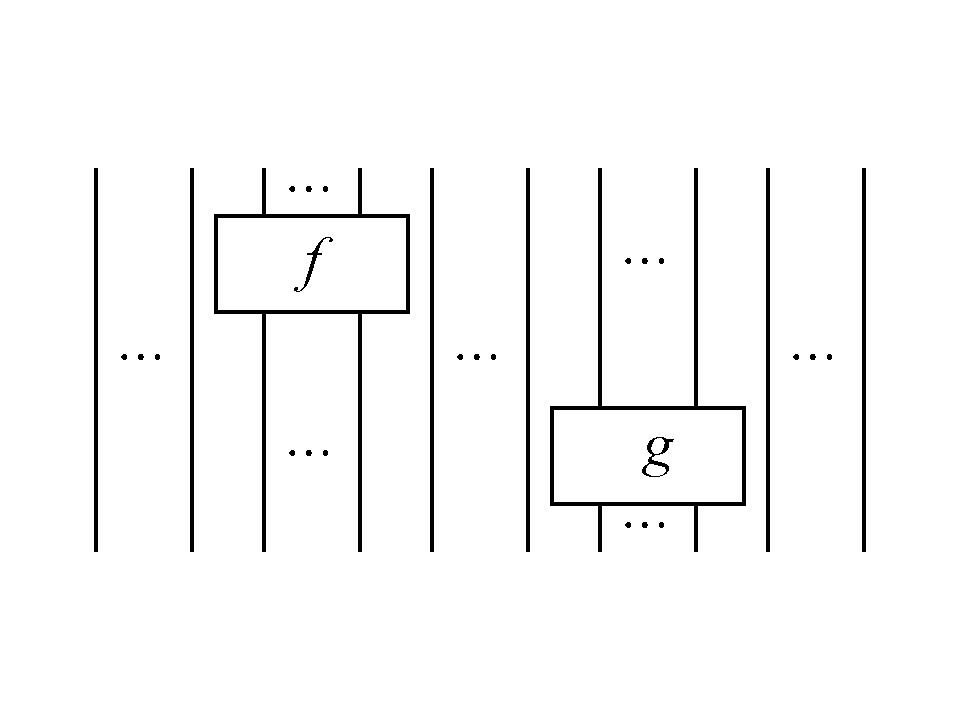}
$\sim$
\includegraphics[scale=0.2,bb= 0 250 750 550]{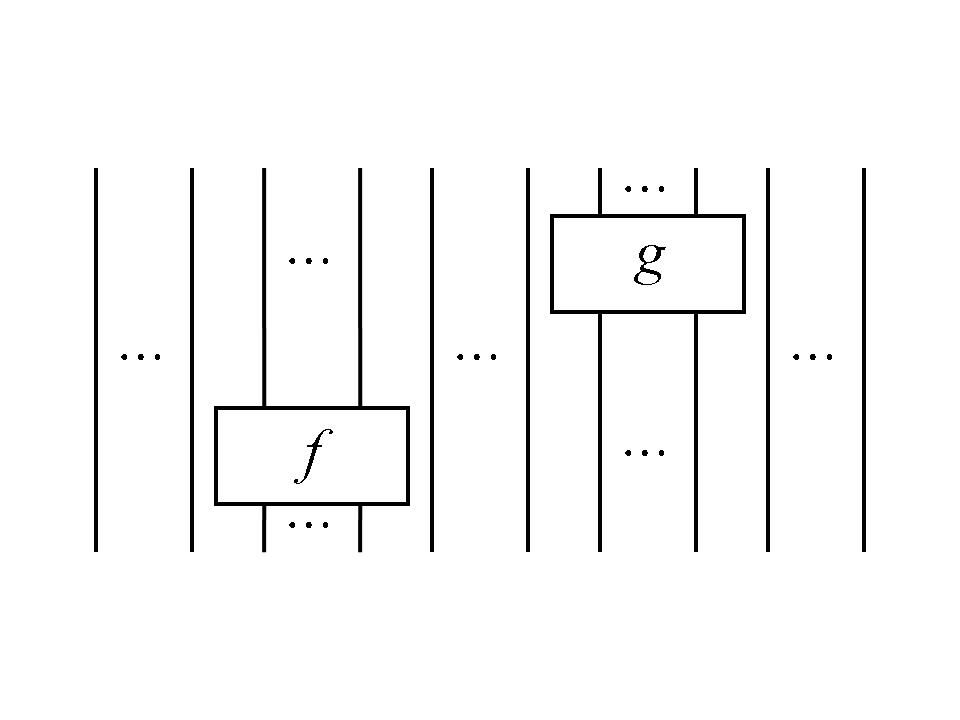}
\vskip 1.6cm
(ii) \textit{Rotation Move:}
\includegraphics[scale=0.2,bb= 0 250 750 550]{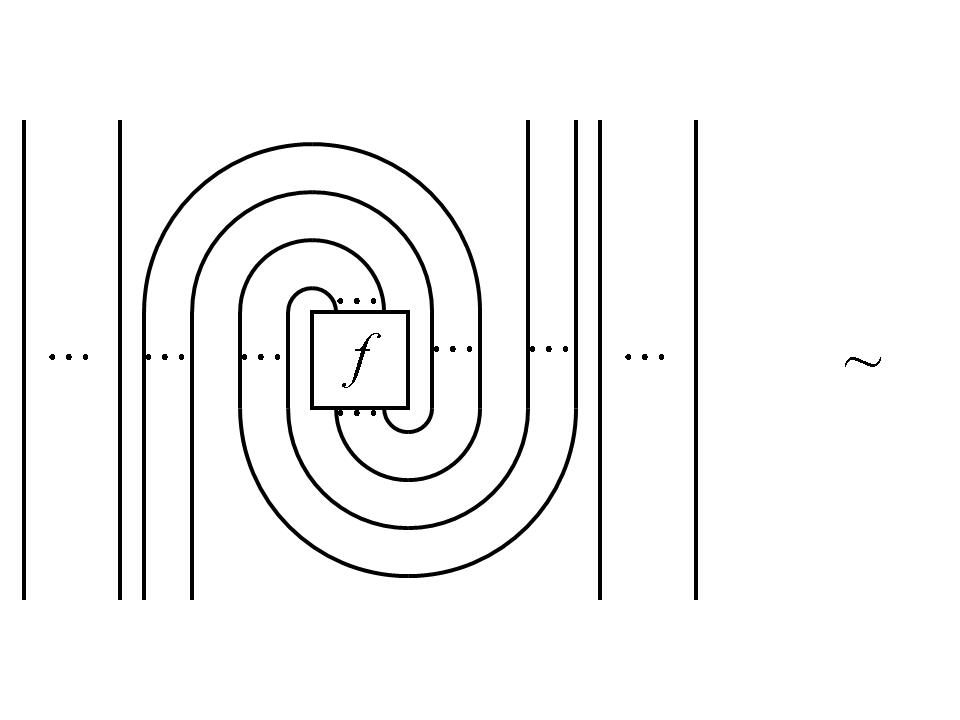}
\includegraphics[scale=0.2,bb= 0 250 750 550]{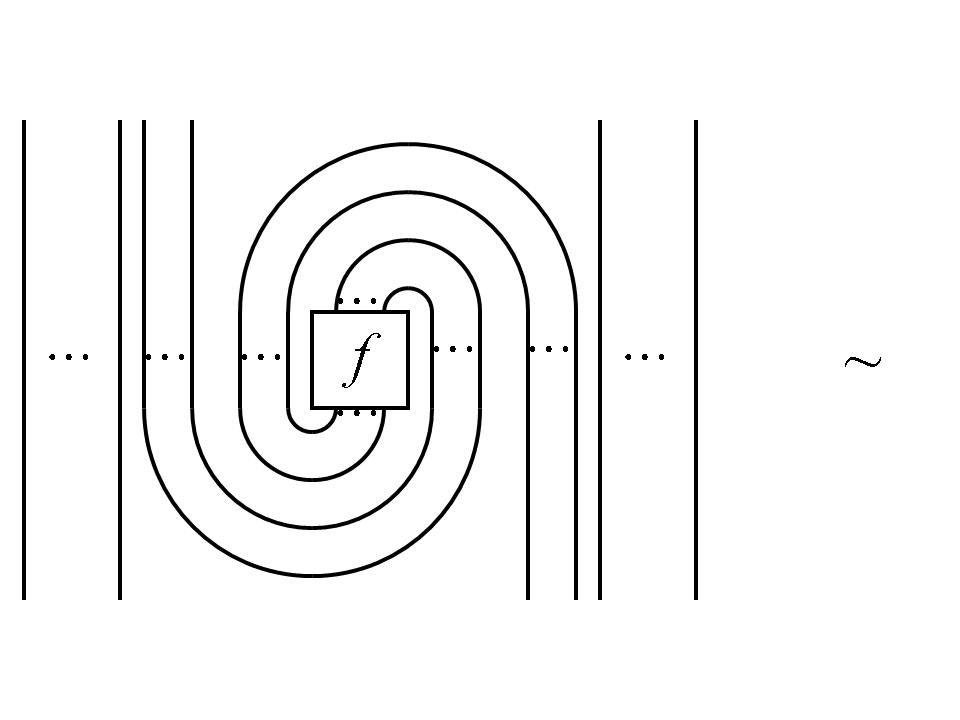}
\includegraphics[scale=0.2,bb= 0 250 750 550]{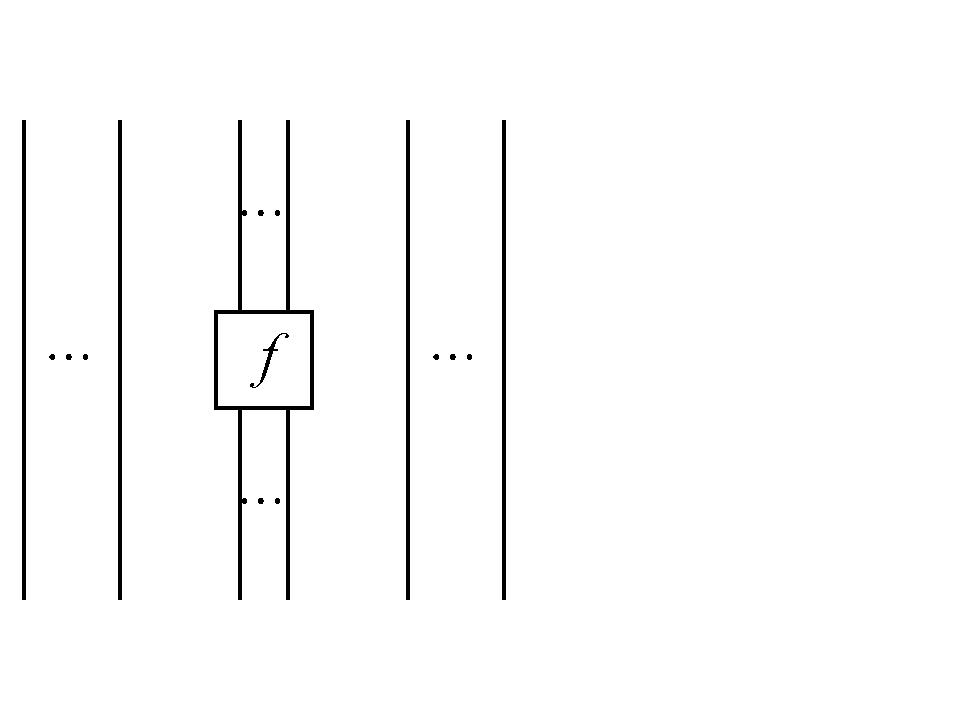}
\vskip 1.6cm
(iii) \textit{Wiggling Move:}
\includegraphics[scale=0.2,bb= 0 250 750 550]{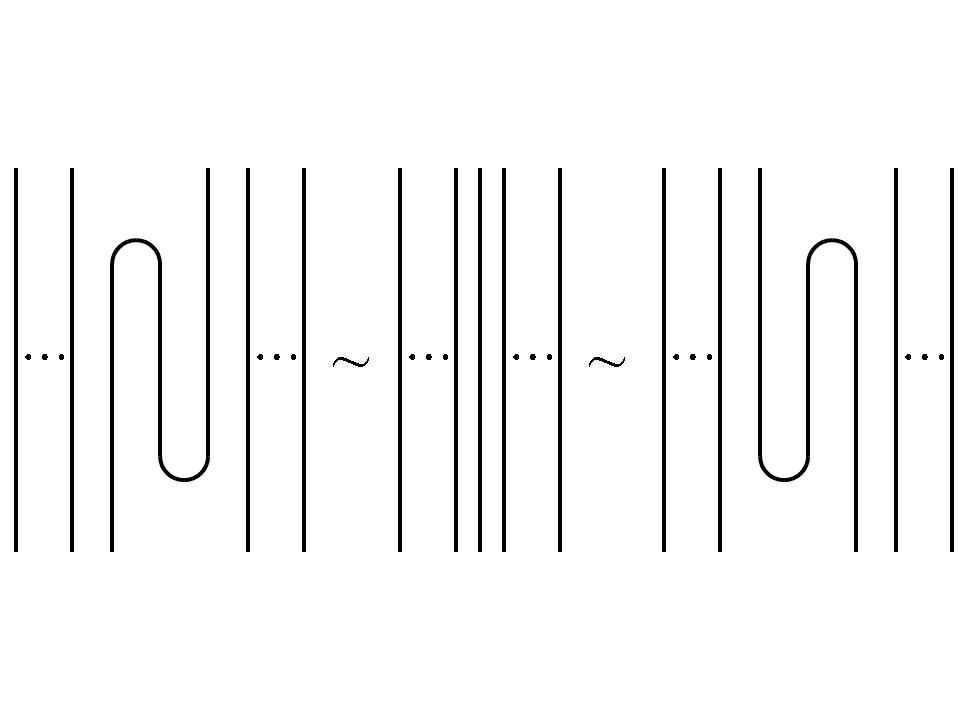}
\vskip 1.6cm
\noindent where $f$ and $g$ are $2$-cells. To show
$P(\theta )(f_{1},f_{2},\cdots ,f_{n})$ is
well-defined, it is enough to show that two standard form representatives
$\Theta_{1}$ and $\Theta _{2}$ of $\theta$ labelled with
$\{f_{1},f_{2},\cdots ,f_{n}\}$, differing by any of the above three moves,
will assign identical $2$-cell. Invariance under sliding moves hold from the
functoriality of $\otimes $ and rotation moves follow from Proposition
\ref{Rotation Invariance} and Corollary \ref{ax*}; finally, wiggling moves are
justified by Lemma \ref{Basic Lemma} (i). Thus, we have a
well-defined map $P:\mathcal{T}((k_{1},\varepsilon _{1}),\cdots
,(k_{n},\varepsilon _{n});(k,\varepsilon ))\rightarrow \mathcal{MV}%
ec((P_{(k_{1},\varepsilon _{1})},\cdots ,P_{(k_{n},\varepsilon
_{n})};P_{(k,\varepsilon )})$ (resp. $P:\mathcal{T}(\emptyset
;(k,\varepsilon ))\rightarrow \mathcal{MV}ec(\emptyset ;P_{(k,\varepsilon
)}) $). Finally, define the planar algebra $P:\mathcal{P\rightarrow MV}ec$ via:
\begin{itemize}
\item $P(k,\varepsilon )=P_{(k,\varepsilon )}$,
\item the linear map $P:\mathcal{P}((k_{1},\varepsilon _{1}),\cdots
,(k_{n},\varepsilon _{n});(k,\varepsilon ))\rightarrow \mathcal{MV}%
ec(P_{(k_{1},\varepsilon _{1})},\cdots ,P_{(k_{n},\varepsilon
_{n})};P_{(k,\varepsilon )})$ (resp. $P:\mathcal{P}(\emptyset
;(k,\varepsilon ))\rightarrow \mathcal{MV}ec(\emptyset ;P_{(k,\varepsilon
)}) $) is defined by extending the map $P:\mathcal{T}((k_{1},\varepsilon
_{1}),\cdots ,(k_{n},\varepsilon _{n});(k,\varepsilon ))\rightarrow \mathcal{%
MV}ec(P_{(k_{1},\varepsilon _{1})},\cdots ,P_{(k_{n},\varepsilon
_{n})};P_{(k,\varepsilon )})$ (resp. $P:\mathcal{T}(\emptyset
;(k,\varepsilon ))\rightarrow \mathcal{MV}ec(\emptyset ;P_{(k,\varepsilon
)}) $) linearly.
\end{itemize}
Clearly, $P(1_{(k,\varepsilon)}) = id_{P((k,\varepsilon))}$ for all
$(k,\varepsilon)$. To check $P$ preserves composition of morphisms, let us
consider two
tangles $T$ and $S$ such that the first internal disc $D_1$ of $T$ has color
$(k,\varepsilon)$ same as that of $S$. Choose $T_1$ and $S_1$ as standard form
representatives of $T$ and $S$ respectively such that dimension of the external
disc of $S_1$ along with the marked points match with that of $D_1$ in $T_1$.
Let $T_1 \underset{D_1}{\circ} S_1$ denote the picture obtained by replacing
$D_1$ by $S_1$ and then erasing the external boundary of $S_1$. Note that
$T_1 \underset{D_1}{\circ} S_1$ is a standard form representative of
$T \underset{D_1}{\circ} S$. Now, we consider $2$-cells for each internal discs
(except $D$) in $T$ and $S$ coming from the appropriate vector spaces and label
the corresponding rectangles in $T_1$, $S_1$, and
$T_1 \underset{D_1}{\circ} S_1$ with them. If we slice
$T_1 \underset{D_1}{\circ} S_1$ as described while defining the action of $P$
on the morphism spaces and induce the slicing of
$T_1 \underset{D_1}{\circ} S_1$ on $T_1$ and $S_1$, then the $2$-cells
corresponding to the slices appearing in $T_1 \underset{D_1}{\circ} S_1$ are
the same as those for $T_1$ with the slice containing $D_1$ being replaced by
the slices coming from $S_1$. Thus $P$ must preserve
composition.

This completes the construction of the planar algebra.
\section{Affine Representations of a Planar Algebra}\label{affrep}
In this section, we will introduce the notion of an \textit{affine
representation of a planar algebra }which is a generalization of the concept
of the \textit{Hilbert space representation of annular Temperley-Lieb} by
Vaughan Jones and Sarah Reznikoff (\cite{Aff TL}); one can also treat this
as an \textit{annular representations of a planar algebra} with rigid
boundaries. We then discuss some general theory of the affine representations
following exactly the way Jones developed the theory for annular
representations in \cite{ann rep}.

Before going into the definition of affine representations, we will first
introduce the \textit{affine category over a planar algebra}.
\begin{definition}
An $((m,\eta ),(n,\varepsilon ))$-affine tangle is an isotopy class of
pictures consisting of:
\begin{itemize}
\item the annulus $\mathcal{A}=\{z\in \mathbb{C}:1\leq z\leq 2\}$,
\item the set of points $\{2e^{-\frac{k\pi i}{m}}:0\leq k\leq 2m-1\}$ (resp.
$\{e^{-\frac{k\pi i}{n}}:0\leq k\leq 2n-1\}$) are numbered clockwise
starting from $2$ (resp. $1$) as the first points,
\item $\mathcal{A}$ consists of internal discs $D_{1}$, $D_{2}$,$\cdots $, $%
D_{l}$ with colour $(k_{1},\varepsilon _{1})$, $(k_{2},\varepsilon _{2})$,$%
\cdots $,$(k_{l},\varepsilon _{l})$ respectively and non-intersecting
oriented strings (just like in an ordinary planar tangle described in
Definition \ref{Def of pln tang}) so that the inner (resp. outer) boundary
of $\mathcal{A}$ gets the colour $\left( n,\varepsilon \right) $ (resp. $%
\left( m,\eta \right) $),
\item any isotopy should keep the boundary of $\mathcal{A}$ fixed.
\end{itemize}
\end{definition}
\begin{figure}[h]
\begin{center}
\includegraphics[scale=0.27]{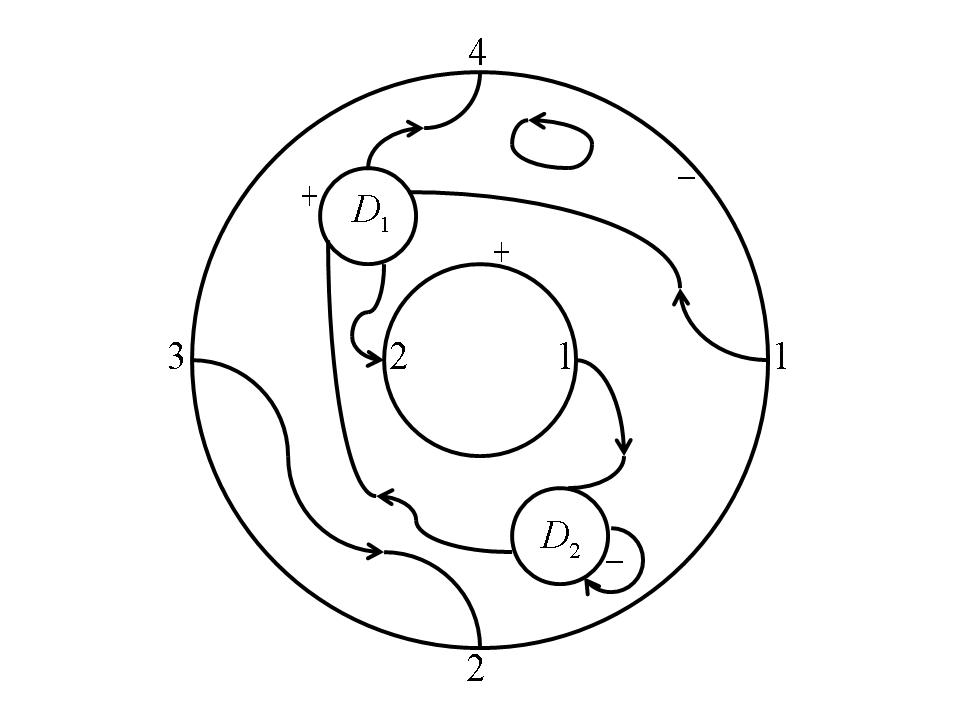}
\end{center}
\caption{Example of a $\left( \left(2,-\right) ,\left( 1,+\right)
\right)$-affine tangle}
\label{aff tang eg}
\end{figure}
Let $P$ be a planar algebra. An $((m,\eta ),(n,\varepsilon ))$-affine tangle
is said to be $P$\textit{-labelled} if, to each internal disc $D$ of $%
\mathcal{A}$ with colour $(k,\varepsilon ^{\prime })$, an element of $%
P_{(k,\varepsilon ^{\prime })}$ is assigned. Let $\mathcal{A}%
_{(n,\varepsilon )}^{(m,\eta )}$ denote the set of all $((m,\eta
),(n,\varepsilon ))$-affine tangles and $\mathcal{A}_{(n,\varepsilon
)}^{(m,\eta )}(P)$ denote the set of all $P$-labelled $((m,\eta
),(n,\varepsilon ))$-affine tangles. If $A\in \mathcal{A}_{(n,\varepsilon
)}^{(m,\eta )}$ and $B\in \mathcal{A}_{(l,\delta )}^{(n,\varepsilon )},$
then we can define $A\circ B$ ($\in \mathcal{A}_{(l,\delta )}^{(m,\eta )}$)
as the affine tangle obtained by considering the picture $\frac{1}{2}(2A\cup
B)$. We might have to smoothen out the strings which are attached with the
inner boundary of $2A$ and outer boundary of $B$; this can also be avoided
by requiring the strings to meet the inner and the outer boundaries radially
in the definition of an affine tangle.

We now set up a convenient way of sketching an affine tangle; instead of
marking the points on the inner (resp. outer) boundary at the roots of unity
(resp. twice the roots of unity), we will mark them close to each other on
the top with $1$ as the leftmost point. Further, with the help of isotopy,
every $A\in \mathcal{A}_{(n,\varepsilon )}^{(m,\eta )}$ can be expressed as:
\begin{center}
\includegraphics[scale=0.27]
{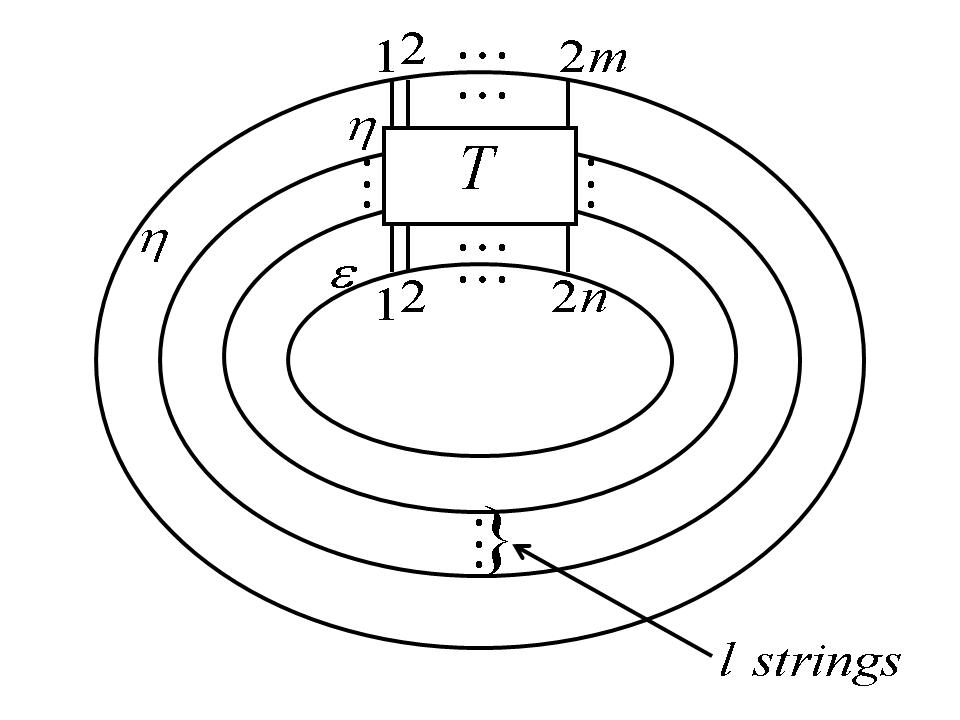}
\end{center}
for some $T\in \mathcal{
T}_{(m+n+l,\eta )}$. Note that $T$ and $l$ are not unique. For example, the
affine tangle in Figure \ref{aff tang eg} can be expressed as the above
annular tangle for $m=2$, $n=1$, $l=1$ and
$T \; =$
\includegraphics[scale=0.24,bb= 0 250 750 550]
{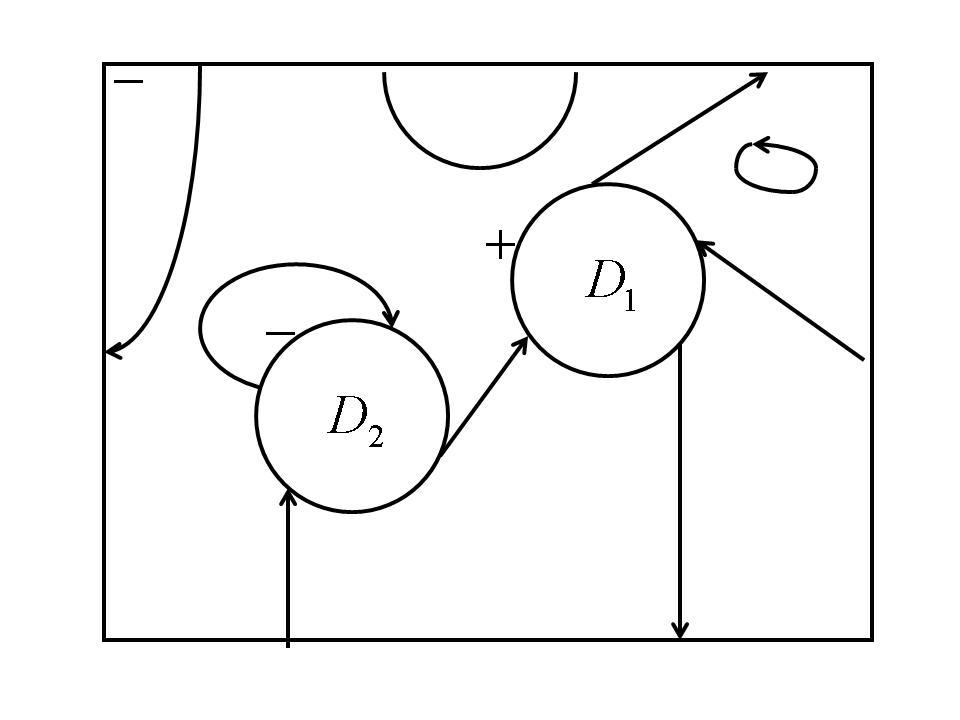}
\vskip 1.92cm
\noindent and $\eta = -$.

Let $(F\mathcal{A}P)_{(n,\varepsilon )}^{(m,\eta )}$ be the vector space
with $\mathcal{A}_{(n,\varepsilon )}^{(m,\eta )}(P)$ as a basis, $\mathcal{T}%
_{(k,\varepsilon )}(P)$ be the set of all $P$\textit{-labelled} $%
(k,\varepsilon )$-planar tangles, $\mathcal{P}_{(k,\varepsilon )}$ (resp. $%
\mathcal{P}_{(k,\varepsilon )}(P)$) be the vector space with $\mathcal{T}%
_{(k,\varepsilon )}$ (resp. $\mathcal{T}_{(k,\varepsilon )}(P)$) as a basis
and $\Psi _{(m,\eta ),(n,\varepsilon )}^{l}$ be the annular
tangle
\includegraphics[scale=0.27,bb= 0 250 700 550]
{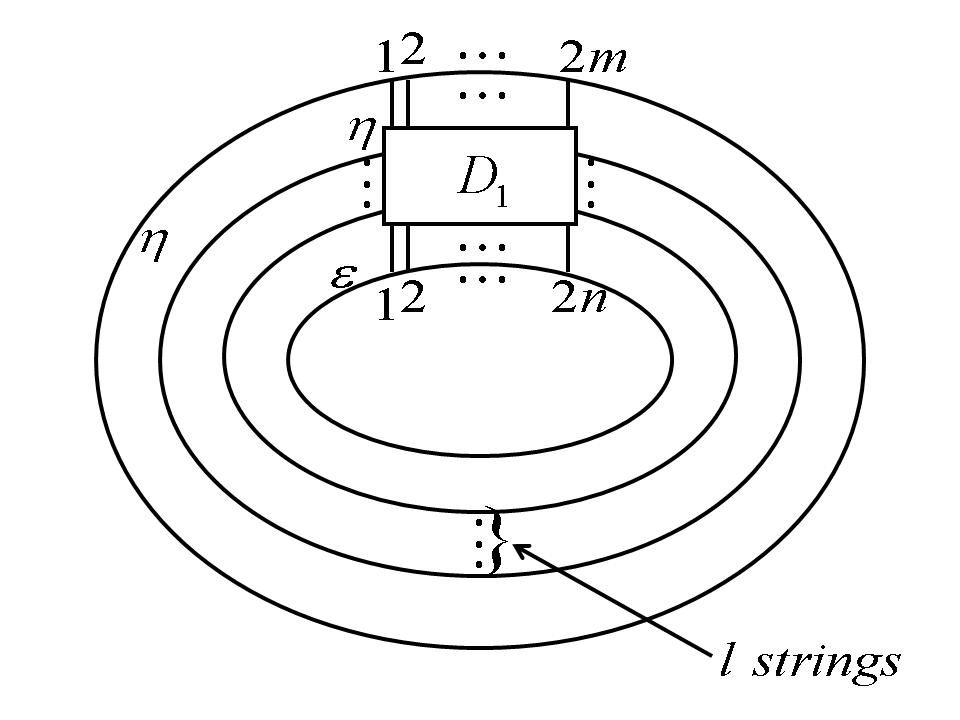}.
\vskip 2.16cm
\noindent Observe that $\Psi _{(m,\eta
),(n,\varepsilon )}^{l}$ induces a linear map $\psi _{(m,\eta
),(n,\varepsilon )}^{l}:\mathcal{P}_{(m+n+l,\eta )}\rightarrow (F\mathcal{A}%
P)_{(n,\varepsilon )}^{(m,\eta )}$ which also lifts to the level of the $P$%
-labelled ones. Moreover, for any $A\in (F\mathcal{A}P)_{(n,\varepsilon
)}^{(m,\eta )}$, there exists $l\in \mathbb{N}_{0}$ and $T\in \mathcal{P}%
_{(m+n+l,\eta )}(P)$ such that $A=\psi _{(m,\eta ),(n,\varepsilon )}^{l}(T)$.
Set
\begin{equation*}
\mathcal{W}_{(n,\varepsilon )}^{(m,\eta )}=\left\{ A\in (F\mathcal{A}
P)_{(n,\varepsilon )}^{(m,\eta )}\left\vert
\begin{array}{c}
A=\psi _{(m,\eta ),(n,\varepsilon )}^{l}(T)\text{ for some }l\in \mathbb{N}
_{0}\text{, } T \in \mathcal{P}_{(m+n+l,\eta )}(P)\\
\text{such that }P(T)=0 \in P(m+n+l,\eta)
\end{array}
\right. \right\} \text{.}
\end{equation*}
where we use the linear map $P:
\mathcal{P}_{(k,\varepsilon )}(P)\rightarrow P(k,\varepsilon )$ induced by the
map of multicategories $P$. It is a
fact that $\mathcal{W}_{(n,\varepsilon )}^{(m,\eta )}$ is a vector subspace
of $(F\mathcal{A}P)_{(n,\varepsilon )}^{(m,\eta )}$. For instance, if $%
A_{i}\in \mathcal{W}_{(n,\varepsilon )}^{(m,\eta )}$ and $l_{i}\in \mathbb{N}%
_{0}$, $T_{i}\in \mathcal{P}_{(m+n+l_{i},\eta )}(P)$ such that $A_{i}=\psi
_{(m,\eta ),(n,\varepsilon )}^{l_{i}}(T_{i})$, $P(T_{i})=0$ for $i=1$, $2$
and $l_{1}\leq l_{2}$, then one can obtain $\widetilde{T_{1}}\in \mathcal{P}%
_{(m+n+l_{2},\eta )}(P)$ such that $A_{1}=\psi _{(m,\eta ),(n,\varepsilon
)}^{l_{2}}(\widetilde{T_{1}})$ by wiggling back and forth a string emanating
from either of the vertical sides of $T_{1}$ around the inner disc of $A_{1}$
until the total number of strings around the inner disc of $A_{1}$ increases
from $l_{1}$ to $l_{2}$; finally, $A_{1}+A_{2}=\psi _{(m,\eta
),(n,\varepsilon )}^{l_{2}}(\widetilde{T_{1}}+T_{2})$.

Define the category $\mathcal{A}ffP$ by:
\begin{itemize}
\item $ob(\mathcal{A}ffP)=\{(k,\varepsilon ):k\in \mathbb{N}_{0},\varepsilon
\in \{+,-\}\}$
\item $Hom_{(\mathcal{A}ffP)}((n,\varepsilon ),(m,\eta ))=\frac{(F\mathcal{A}%
P)_{(n,\varepsilon )}^{(m,\eta )}}{\mathcal{W}_{(n,\varepsilon )}^{(m,\eta )}%
}=$ the quotient vector space of $(F\mathcal{A}P)_{(n,\varepsilon
)}^{(m,\eta )}$ over $\mathcal{W}_{(n,\varepsilon )}^{(m,\eta )}$ (also
denoted by $\left( \mathcal{A}ffP\right) _{(n,\varepsilon )}^{(m,\eta )}$),
\item the composition of affine tangles is linearly extended for $(F\mathcal{%
A}P)$'s; one can easily verify that $A\circ B$ $\in \mathcal{W}_{(l,\delta
)}^{(m,\eta )}$ whenever $A\in \mathcal{W}_{(n,\varepsilon )}^{(m,\eta )}$
and $B\in (F\mathcal{A}P)_{(l,\delta )}^{(n,\varepsilon )}$, or $A\in (F%
\mathcal{A}P)_{(l,\delta )}^{(n,\varepsilon )}$ and $B\in \mathcal{W}%
_{(n,\varepsilon )}^{(m,\eta )}$; this implies the composition is induced in
the level of quotient vector spaces as well,
\item the identity of $(k,\varepsilon )$ denoted by $1_{(k,\varepsilon )}$,
is given by an $((k,\varepsilon ),(k,\varepsilon ))$-affine tangle obtained
by joining the $i^{\text{th}}$ point of the inner boundary with the $i^{%
\text{th}}$ point of the outer boundary by a straight string for all $i$.
\end{itemize}
We will refer the category $\mathcal{A}ffP$ as \textit{affine category over }
$P$.
\begin{definition}
An additive functor $F:\mathcal{A}ffP\rightarrow \mathcal{V}ec$ is said to
be an affine representation of $P$.
\end{definition}
\begin{remark}
The functor induced by $P$ itself gives an affine representation of $P$;
this is called the `trivial' affine representation.
\end{remark}
\begin{lemma}
\label{aff rep basic lemma}If $F$ is an affine representation of $P,$ then

(a) $F(k,\varepsilon )\hookrightarrow F(k+1,\varepsilon )$,

(b) $F(k,\varepsilon )$ is isomorphic to $F(k,-\varepsilon )$\newline
for all colours $(k,\varepsilon )$.
\end{lemma}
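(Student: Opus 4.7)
The plan is to exhibit explicit affine tangles realizing the asserted linear maps and then verify that appropriate compositions collapse, in the quotient $\mathcal{A}ffP$, to (scalar multiples of) the identity, thereby forcing injectivity or invertibility of the induced maps under $F$.

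For (a), I would take $\iota \in \mathcal{A}_{(k,\varepsilon)}^{(k+1,\varepsilon)}$ to be the \emph{inclusion} affine tangle with $2k$ radial strings joining inner point $i$ to outer point $i$ together with a small arc joining the two additional outer points, and $\kappa \in \mathcal{A}_{(k+1,\varepsilon)}^{(k,\varepsilon)}$ to be the mirror picture: $2k$ radial strings plus an arc joining the last two \emph{inner} points of $\kappa$. The composite $\kappa \circ \iota$ then consists of the identity tangle on $(k,\varepsilon)$ together with one extra contractible closed loop in the annulus. Writing this composition as the image under some $\psi_{(k,\varepsilon),(k,\varepsilon)}^{l}$ of a planar tangle containing a floating circle, and comparing with the same $\psi$-image with the circle removed, the modulus axiom of $P$ (which one may take to be nonzero in the settings of interest, in particular for subfactor planar algebras) yields $\kappa \circ \iota = \delta \cdot 1_{(k,\varepsilon)}$ in $\mathcal{A}ffP$. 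Applying $F$ gives $F(\kappa) \circ F(\iota) = \delta \cdot \mathrm{id}_{F(k,\varepsilon)}$, whence $F(\iota)$ is injective.

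For (b), I would take $\rho \in \mathcal{A}_{(k,-\varepsilon)}^{(k,\varepsilon)}$ to be the \emph{rotation by one} affine tangle connecting inner point $i$ to outer point $i+1$ (indices read cyclically modulo $2k$) by arcs that rotate slightly clockwise in the annulus; the effect of this shift is to flip the shading of the region adjacent to the first marked point, which precisely interchanges $\varepsilon$ and $-\varepsilon$. Let $\sigma \in \mathcal{A}_{(k,\varepsilon)}^{(k,-\varepsilon)}$ be the analogous counter-clockwise rotation. The composite $\sigma \circ \rho$ connects inner point $i$ to outer point $i$ through a short S-shaped arc which is isotopic, inside the annulus, to the radial string joining them; hence $\sigma \circ \rho = 1_{(k,-\varepsilon)}$ and similarly $\rho \circ \sigma = 1_{(k,\varepsilon)}$. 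In the degenerate case $k=0$, where no marked points exist, $\rho$ and $\sigma$ must be taken to be single closed loops in the annulus separating the two shadings, and one argues once more via the modulus. Therefore $F(\rho)$ and $F(\sigma)$ are mutually inverse (up to a nonzero scalar when $k=0$), yielding the isomorphism.

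The main obstacle is verifying that the asserted composition identities genuinely hold in the quotient $\mathcal{A}ffP$, i.e., realizing the composite affine tangles as $\psi$-images of planar tangles whose difference from the identity picture lies in $\ker P$. This is exactly where the modulus of $P$ enters, both for (a) and for the $k=0$ case of (b); the remaining identifications are routine planar isotopies inside the annulus that do not involve the quotient at all.
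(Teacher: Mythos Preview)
Your approach is essentially the same as the paper's: the inclusion tangle for (a) and the one-click rotation tangle for (b). The paper's proof is extremely terse—it simply names these tangles and asserts the conclusion—whereas you supply the missing justification by exhibiting the one-sided inverse $\kappa$ in (a) and the two-sided inverse $\sigma$ in (b), and you correctly identify that a nonzero modulus is what makes $\kappa\circ\iota$ a scalar multiple of the identity. Your treatment of the $k=0$ case in (b) is something the paper does not address at all (its description of $R_{(k,\varepsilon)}$ is vacuous when $k=0$); note, however, that the two concentric loops arising there are \emph{non}-contractible, so the modulus axiom does not apply to them directly—you would need to pass through the $\psi$-presentation and use the planar-algebra relations to reduce, which is doable but not quite ``once more via the modulus.''
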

\begin{proof}
The inclusion in part (a) is given by considering the $F$-image of the
inclusion tangle.

For part (b), consider the rotation tangle $R_{(k,\varepsilon )}\in \mathcal{%
A}_{(k,\varepsilon )}^{(k,-\varepsilon )}$ obtained by joining the points $%
e^{-\frac{l\pi i}{k}}$ and $2e^{-\frac{(l+1)\pi i}{k}}$ on the boundary of $%
\mathcal{A}$ by a string which does not make a full round about the inner
disc. $F(R_{(k,\varepsilon )})$ gives the desired isomorphism in (b)
\end{proof}
\begin{remark}
It may seem so that $\left( R_{(k,\varepsilon )}\right) ^{2k}$ is the
identity $((k,\varepsilon ),(k,\varepsilon ))$-affine tangle (that is, the
tangle obtained by joining the points $e^{-\frac{l\pi i}{k}}$ and $2e^{-%
\frac{l\pi i}{k}}$ by a straight line), but this is not true because of the
restriction of the isotopy being identity on boundary of $\mathcal{A}$. This
is the main difference between the annular representations of $P$ (in \cite%
{ann rep} and \cite{Ghosh}) and the affine representations.
\end{remark}
The \textit{weight} of an affine representation $F$ denoted by $wt(F)$, is
given by the smallest integer $k$ such that 
$\dim (F(k,\varepsilon ))\neq \{0\}$.
The $wt(F)$ is well-defined by Lemma \ref{aff rep basic lemma}.

An affine representation $F$ will be called \textit{locally finite} if $%
F(k,\varepsilon )$ is finite dimensional for all colours $(k,\varepsilon )$.
The \textit{dimension} of an affine representation $F$ is defined as a pair
of formal power series $(\Phi _{F}^{+},\Phi _{F}^{-})$ where%
\begin{equation*}
\Phi _{F}^{\varepsilon }(z)=\sum_{k=0}^{\infty }\dim (F(k,\varepsilon
))~z^{k}~~~~\text{ for }\varepsilon \in \{+,-\}\text{.}
\end{equation*}

\noindent \textbf{Question:}\textit{\ If }$P$\textit{\ is a planar
algebra with modulus }$(\delta ,\delta )$\textit{, is the radius of
convergence of the dimension of an affine representation greater than or
equal to }$\delta ^{-2}$\textit{?}

The above question appeared in \cite{ann rep} for annular representations of
a planar algebra. The question for annular
representations was answered in affirmative for the Temperley-Lieb planar
algebras by Jones (in \cite{ann rep}) and for the Group
Planar Algebras by Ghosh (in \cite{Ghosh}). We will show the same for affine 
representations of any finite depth planar algebra in the Section \ref{fdpa}.

Let $P$ be a $\ast $- or a $C^{\ast }$-planar algebra. Then,
$\mathcal{P}_{(k,\varepsilon )}(P)$ becomes a $\ast $-algebra where $\ast $ of 
a labelled tangle is given by $\ast $ of the unlabelled tangle whose internal 
discs are labelled with $\ast $ of the labels. One can define $\ast $ of an 
affine tangle by relecting it around a
circle concentric to inner or outer boundary and then isotopically stretch
or shirnk to fit into the annulus $\mathcal{A}$ such that the first point of
inner or outer boundary after reflection remains the same whereas the first
point of any internal disc after reflection is given by the reflection of
the last point and colours of all discs are preserved; this can be induced
in the $P$-labelled ones by labelling the internals discs of the reflected
tangle with $\ast $ of the labels. Note that $\ast $ is an involution.
Extending $\ast $ conjugate linearly, we can define the map $\ast :\left( F%
\mathcal{A}P\right) _{(n,\varepsilon )}^{(m,\eta )}\rightarrow \left( F%
\mathcal{A}P\right) _{(m,\eta )}^{(n,\varepsilon )}$ for all colours $%
(m,\eta )$, $(n,\varepsilon )$. It is easy to check that $\ast \left(
\mathcal{W}_{(n,\varepsilon )}^{(m,\eta )}\right) =\mathcal{W}_{(m,\eta
)}^{(n,\varepsilon )}$. This makes the category $\mathcal{A}ffP$ a $\ast $%
-category. An additive functor $F:\mathcal{A}ffP\rightarrow \mathcal{H}il$
is said to be an \textit{affine }$\ast $\textit{-representation if }$F$ is $%
\ast $ preserving, that is, $F\left( A^{\ast }\right) =\left( F(A)\right)
^{\ast }$ for all $A\in Mor_{\mathcal{A}ffP}$ where $\mathcal{H}il$ denotes
the category of Hilbert spaces.
\begin{remark}
Note that if $F$ is an affine $\ast $-representation, then
$\langle F(A)(v),w\rangle =\langle v,F(A^{\ast })(w)\rangle$
for all $A\in \left( \mathcal{A}ffP\right) _{(n,\varepsilon )}^{(m,\eta )}$,
$v\in F(n,\varepsilon )$, $w\in F(m,\eta )$.
\end{remark}
The category of affine representations of a planar algebra $P$ with natural
transformations as morphism space, forms an abelian category and the
dimension is additive with respect to direct sum. One can further talk about
\textit{irreducibilty} and \textit{indecomposability} of an affine
representation (see \cite{ann rep} for details). For example, the trivial 
affine representation of $P$ is
irreducible. However, if we restrict ourselves to the case of a locally
finite, non-degenerate $C^{\ast }$-planar algebra $P$ and the category of
locally finite affine $\ast $-representations, the notions of irreducibility
and indecomposability coincide. In this case, one can also talk about
\textit{orthogonality} of affine representations. These treatments for
annular representations can be found in more details in \cite{ann rep}.

Jones indicated a procedure of finding annular representations of a
locally finite $C^{\ast }$-planar algebra $P$ with modulus $(\delta ,\delta
) $ in \cite{ann rep}; the same works for the affine ones as well. For this,
we need to consider a subspace of the morphism space $\left( \mathcal{A}%
ffP\right) _{(k,\varepsilon )}^{(k,\varepsilon )}$, namely,%
\begin{equation*}
\left( \widehat{\mathcal{A}ffP}\right) _{(k,\varepsilon )}=\left\{ A\in
\left( \mathcal{A}ffP\right) _{(k,\varepsilon )}^{(k,\varepsilon
)}\left\vert
\begin{tabular}{c}
$A\text{ is a linear combination of}$ \\
$\text{elements of the form }B\circ C\text{ where}$ \\
$B\in \left( \mathcal{A}ffP\right) _{(n,\eta )}^{(k,\varepsilon )}\text{, }%
C\in \left( \mathcal{A}ffP\right) _{(k,\varepsilon )}^{(n,\eta )}$ \\
$\text{for some colour }(n,\eta )\text{ such that }n<k$%
\end{tabular}%
\right. \right\} \text{.}
\end{equation*}%
It is easy to see that $\left( \widehat{\mathcal{A}ffP}\right)
_{(k,\varepsilon )}$ is an ideal in $\left( \mathcal{A}ffP\right)
_{(k,\varepsilon )}^{(k,\varepsilon )}$. We list some common properties
shared by affine $\ast $-representations and annular $\ast $-representations
of $P$; the proofs can be found in \cite{ann rep}.

(i) An affine representation $F$ is irreducible iff $F(k,\varepsilon )$ is
irreducible as an $\left( \mathcal{A}ffP\right) _{(k,\varepsilon
)}^{(k,\varepsilon )}$-module for all colours $\left( k,\varepsilon \right) $%
.

(ii) If $W$ is an irreducible $\left( \mathcal{A}ffP\right) _{(k,\varepsilon
)}^{(k,\varepsilon )}$-submodule of $F(k,\varepsilon )$ for some colour $%
\left( k,\varepsilon \right) $, then $W$ generates an irreducible
subrepresentation of $F$.

(iii) Orthogonal $\left( \mathcal{A}ffP\right) _{(k,\varepsilon
)}^{(k,\varepsilon )}$-submodules of $F(k,\varepsilon )$ for some colour $%
\left( k,\varepsilon \right) $, generate orthogonal subrepresentations of $F$%
.

(iv) If $F$ and $G$ are representations with $F$ being irreducible and if $%
\theta :F(k,\varepsilon )\rightarrow G(k,\varepsilon )$ is a non-zero $%
\left( \mathcal{A}ffP\right) _{(k,\varepsilon )}^{(k,\varepsilon )}$-linear
homomorphism for some colour $\left( k,\varepsilon \right) $, then $\theta $
extends to an injective homomorphism from $\ F$ to $G$, that is, an
injective natural transformation from $F$ to $G$.

(v) If $W_{\left( k,\varepsilon \right) }=$ $span\{F(A)\left( v\right) :A\in
\left( \mathcal{A}ffP\right) _{(n,\eta )}^{(k,\varepsilon )},v\in F(n,\eta
),k>n\geq 0\} \subset F{\left( k,\varepsilon \right) }$, then
\begin{equation*}
\left( W_{\left( k,\varepsilon \right) }\right) ^{\perp }=\bigcap_{A\in
\left( \widehat{\mathcal{A}ffP}\right) _{(k,\varepsilon )}}kernel(F(A))\text{%
.}
\end{equation*}

From (v), we can conclude that for an affine $\ast $-representation $F$ with
weight $k$, we have

\begin{equation*}
F(k,\varepsilon )=\bigcap_{A\in \left( \widehat{\mathcal{A}ffP}\right)
_{(k,\varepsilon )}}kernel(F(A))
\end{equation*}%
since $W_{\left( k,\varepsilon \right) }$ turns out to be zero and hence $%
F(k,\varepsilon )$ forms a module over the quotient $\frac{\left( \mathcal{A}%
ffP\right) _{(k,\varepsilon )}^{(k,\varepsilon )}}{\left( \widehat{\mathcal{A%
}ffP}\right) _{(k,\varepsilon )}}$. We denote this quotient algebra by $%
\left( LWP\right) _{(k,\varepsilon )}$ (Lowest Weight algebra at $%
(k,\varepsilon )$).

By (i), if F is an irreducible affine $\ast $-representation with weight $k$%
, then $F(k,\varepsilon )$ is an irreducible module over $%
(LWP)_{(k,\varepsilon )}$. In order to find the irreducible affine $\ast $%
-representations of $P$, it suffices to do the following:

(i) find the irreducible representations of $(LWP)_{(k,\varepsilon )}$,

(ii) find which irreducible representation of $(LWP)_{(k,\varepsilon )}$
gives rise to an irreducible affine $\ast $-representation of the planar
algebra.

We will use this method to deduce some results on the irreducible affine $%
\ast $-representations of a \textit{finite depth planar algebra }in the next
section.
\section{Finite Depth Planar Algebras}\label{fdpa}
In this section, we will recall the notion of the \textit{depth} of a planar
algebra which is motivated from the \textit{depth of a finite index subfactor}.
We then prove some finiteness results for the category of affine representation
of subfactor-planar algebras. Finally, we answer the question mentioned in
Section \ref{affrep} for subfactor-planar algebras with finite depth.

Let $P$ be a planar algebra with modulus $(\delta _{+},\delta _{-})$. We
first define below a tangle called \textit{Jones projections}.\\
$E_{(k,\varepsilon )}=P\left(
\includegraphics[scale=0.16,bb= 0 250 700 550]{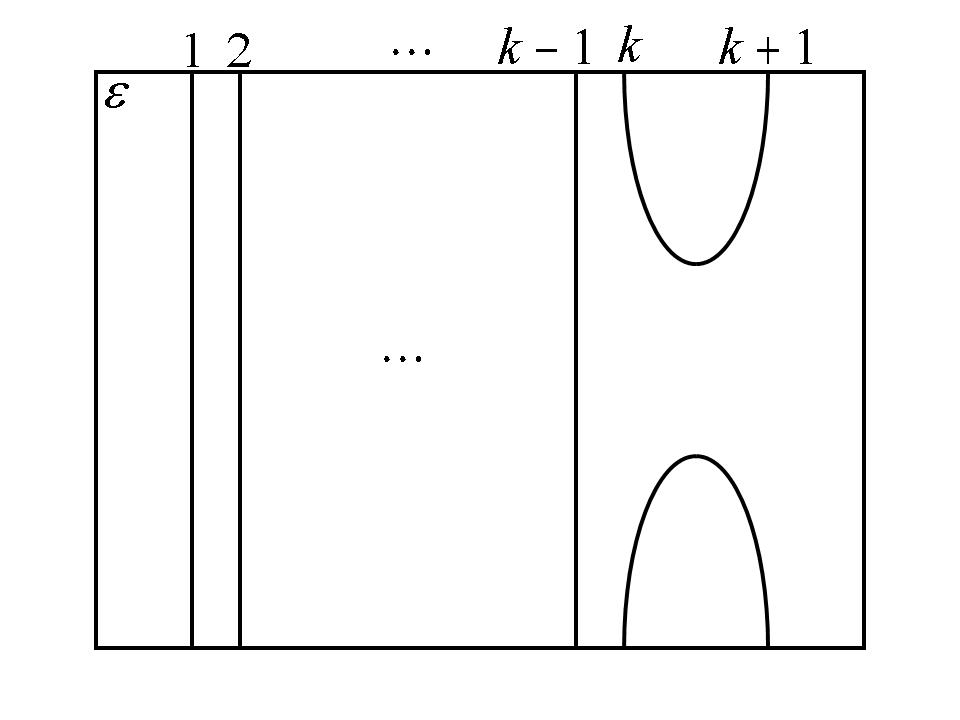}
\right) \in P_{(k+1,\varepsilon )}$
where $k\in \mathbb{N}$ and $\varepsilon \in \left\{ +,-\right\} $.\\
Note that
$E_{(k,\varepsilon )}^{2}=\left\{
\begin{array}{ll}
\delta _{\varepsilon} E_{(k,\varepsilon )} & \text{if } k \text{ is odd}\\
\delta _{-\varepsilon} E_{(k,\varepsilon )} & \text{if } k \text{ is even}
\end{array}
\right.$

From now on, we will work with the case $\delta _{+}=\delta _{-}=\delta $.
In this case, $e_{(k,\varepsilon )}=\frac{1}{\delta }~E_{(k,\varepsilon )}$
becomes an idempotent. Two more immediate consequences are:

(i) $E_{(k,\varepsilon )}\cdot E_{(k\pm 1,\varepsilon )}\cdot
E_{(k,\varepsilon )}=E_{(k,\varepsilon )}$,

(ii) $E_{(k,\varepsilon )}\cdot E_{(l,\varepsilon )}=E_{(l,\varepsilon
)}\cdot E_{(k,\varepsilon )}$ whenever $\left\vert k-l\right\vert \geq 2$

\noindent where $\cdot $ denotes the multiplication in the planar algebra $P$.
\begin{lemma}
\label{ideal}The subspace $I_{(k,\varepsilon )}=P_{(k,\varepsilon
)}e_{(k,\varepsilon )}P_{(k,\varepsilon )}=span\left\{ x\cdot
e_{(k,\varepsilon )}\cdot y:x,y\in P_{(k,\varepsilon )}\right\} $ is a
two-sided ideal of $P_{(k+1,\varepsilon )}$.
\end{lemma}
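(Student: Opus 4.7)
The plan is to verify directly that $z \cdot I_{(k,\varepsilon)} \subseteq I_{(k,\varepsilon)}$ and $I_{(k,\varepsilon)} \cdot z \subseteq I_{(k,\varepsilon)}$ for every $z \in P_{(k+1,\varepsilon)}$. By linearity it is enough to test on a generator $a = x \cdot e_{(k,\varepsilon)} \cdot y$ with $x,y \in P_{(k,\varepsilon)}$, where $P_{(k,\varepsilon)}$ sits inside $P_{(k+1,\varepsilon)}$ via the inclusion tangle. Rewriting $z \cdot a = (zx) \cdot e_{(k,\varepsilon)} \cdot y$: the factor $zx$ lies in $P_{(k+1,\varepsilon)}$ while $y$ already lies in $P_{(k,\varepsilon)}$, so the problem reduces to showing that for every $w \in P_{(k+1,\varepsilon)}$ the product $w \cdot e_{(k,\varepsilon)}$ lies in $P_{(k,\varepsilon)} \cdot e_{(k,\varepsilon)}$, which in turn sits inside $I_{(k,\varepsilon)}$; the right-multiplication claim $a \cdot z \in I_{(k,\varepsilon)}$ is handled symmetrically.

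The key step is therefore the absorption identity $w \cdot e_{(k,\varepsilon)} \in P_{(k,\varepsilon)} \cdot e_{(k,\varepsilon)}$ for every $w \in P_{(k+1,\varepsilon)}$. I would derive it from two planar-algebraic inputs. First, the conditional-expectation identity
\[
e_{(k,\varepsilon)} \cdot u \cdot e_{(k,\varepsilon)} = \mathcal{E}(u) \cdot e_{(k,\varepsilon)} \qquad (u \in P_{(k,\varepsilon)}),
\]
where $\mathcal{E}(u) \in P_{(k,\varepsilon)}$ is obtained from $u$ by a specific capping tangle; this is verified by a direct planar isotopy exploiting the cap-and-cup structure of $E_{(k,\varepsilon)}$, together with the sliding and wiggling moves of Section 3 and the modulus axiom. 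Second, the basic-construction decomposition $P_{(k+1,\varepsilon)} = P_{(k,\varepsilon)} + P_{(k,\varepsilon)} \cdot e_{(k,\varepsilon)} \cdot P_{(k,\varepsilon)}$, which is a structural feature available in the subfactor-planar-algebra setting of this section. Given both, writing $w = w_0 + \sum_j \alpha_j \cdot e_{(k,\varepsilon)} \cdot \beta_j$ with $w_0, \alpha_j, \beta_j \in P_{(k,\varepsilon)}$ yields
\[
w \cdot e_{(k,\varepsilon)} = w_0 \cdot e_{(k,\varepsilon)} + \sum_j \alpha_j \cdot \mathcal{E}(\beta_j) \cdot e_{(k,\varepsilon)} \in P_{(k,\varepsilon)} \cdot e_{(k,\varepsilon)}.
\]

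With absorption in hand, $(zx) \cdot e_{(k,\varepsilon)} = \xi \cdot e_{(k,\varepsilon)}$ for some $\xi \in P_{(k,\varepsilon)}$, and so $z \cdot a = \xi \cdot e_{(k,\varepsilon)} \cdot y \in P_{(k,\varepsilon)} \cdot e_{(k,\varepsilon)} \cdot P_{(k,\varepsilon)} = I_{(k,\varepsilon)}$; the right-multiplication case follows from the mirror-image absorption $e_{(k,\varepsilon)} \cdot P_{(k+1,\varepsilon)} \subseteq e_{(k,\varepsilon)} \cdot P_{(k,\varepsilon)}$. The main obstacle is the absorption identity, and specifically its two ingredients: the conditional-expectation identity requires a careful verification as a planar-tangle isotopy, while the basic-construction decomposition relies on the structural context of subfactor planar algebras rather than merely the modulus assumption.
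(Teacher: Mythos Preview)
Your reduction to the absorption identity $w\cdot e_{(k,\varepsilon)}\in P_{(k,\varepsilon)}\cdot e_{(k,\varepsilon)}$ is fine, but the way you propose to prove it has a genuine gap. You invoke the basic-construction decomposition
\[
P_{(k+1,\varepsilon)}=P_{(k,\varepsilon)}+P_{(k,\varepsilon)}\cdot e_{(k,\varepsilon)}\cdot P_{(k,\varepsilon)},
\]
but this lemma is stated (and used) for an arbitrary planar algebra with modulus---in fact the remark following Lemma~\ref{S lemma} points out that the modulus is not even needed here. The decomposition above is a theorem about subfactor planar algebras (coming from the Jones basic construction), not an axiom of planar algebras, so you are assuming something strictly stronger than what is available. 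Worse, that decomposition already says much more than the lemma, so even in the subfactor setting the argument is close to circular.

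The paper's proof bypasses both ingredients you list and gives a one-line diagrammatic argument: one simply draws the tangle representing $(x\cdot e_{(k,\varepsilon)}\cdot y)\cdot z$ and observes, by planar isotopy, that the cup in $E_{(k,\varepsilon)}$ together with the extra through-string coming from the inclusion of $y$ into $P_{(k+1,\varepsilon)}$ allows the portion containing $y$ and $z$ to be enclosed in a single $(k,\varepsilon)$-box $y'$. Hence $(x\cdot e_{(k,\varepsilon)}\cdot y)\cdot z = x\cdot e_{(k,\varepsilon)}\cdot y'\in I_{(k,\varepsilon)}$; flipping the picture gives the left-ideal statement. The same isotopy, applied with $x=y=1$, would in fact prove your absorption identity directly---so if you want to salvage your route, replace the appeal to the basic-construction decomposition by that picture, and the conditional-expectation identity becomes unnecessary as well.
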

\begin{proof}
The proof of being right ideal easily follows by considering the tangle
\includegraphics[scale=0.16,bb= 0 250 700 550]{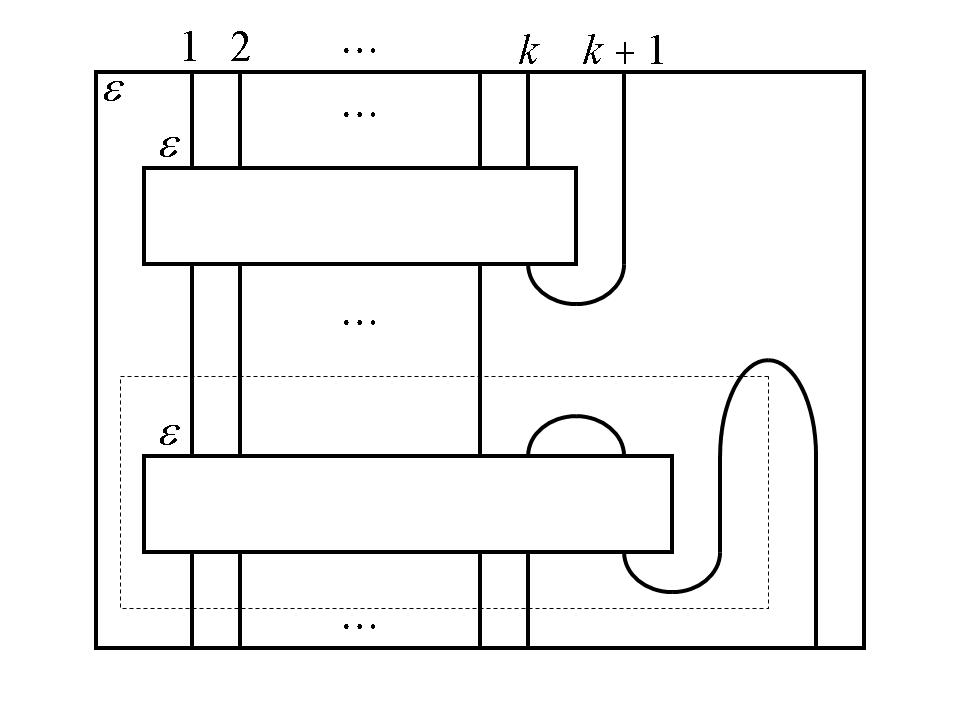}
\vskip 1.28cm
\noindent and noting that the range of the $P$ action of this tangles is inside
$I_{(k,\varepsilon )}$. Proof of left ideal follows
from the same tangle with upside down.
\end{proof}
\begin{lemma}
If $I_{(k,\varepsilon )}=P_{(k+1,\varepsilon )}$, then $I_{(k+1,\varepsilon
)}=P_{(k+2,\varepsilon )}$.
\end{lemma}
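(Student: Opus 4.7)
The plan is to show that the identity element $1 \in P_{(k+2,\varepsilon)}$ belongs to $I_{(k+1,\varepsilon)}$; since $I_{(k+1,\varepsilon)}$ is a two-sided ideal by Lemma \ref{ideal}, this forces $I_{(k+1,\varepsilon)} = P_{(k+2,\varepsilon)}$. The inclusions $P_{(k,\varepsilon)} \hookrightarrow P_{(k+1,\varepsilon)} \hookrightarrow P_{(k+2,\varepsilon)}$ (induced by the inclusion tangle depicted earlier in the paper) are unital, so the unit of $P_{(k+2,\varepsilon)}$ coincides with the image of the unit of $P_{(k+1,\varepsilon)}$.

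By hypothesis $P_{(k,\varepsilon)}\, e_{(k,\varepsilon)}\, P_{(k,\varepsilon)} = P_{(k+1,\varepsilon)}$, so I can write
\[
1_{P_{(k+1,\varepsilon)}} = \sum_i x_i \cdot e_{(k,\varepsilon)} \cdot y_i
\]
for some finite collection $x_i, y_i \in P_{(k,\varepsilon)}$ (viewed inside $P_{(k+1,\varepsilon)}$). Pushing this equation forward under the unital inclusion $P_{(k+1,\varepsilon)} \hookrightarrow P_{(k+2,\varepsilon)}$, the same expression represents the unit of $P_{(k+2,\varepsilon)}$. Therefore it is enough to show that $e_{(k,\varepsilon)}$, viewed as an element of $P_{(k+2,\varepsilon)}$, already lies in $I_{(k+1,\varepsilon)}$.

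This last step is the one algebraic computation required and is supplied by the Temperley--Lieb relation $E_{(k,\varepsilon)} E_{(k+1,\varepsilon)} E_{(k,\varepsilon)} = E_{(k,\varepsilon)}$ from (i). Substituting $E = \delta e$ yields the normalized form
\[
e_{(k,\varepsilon)} = \delta^{2}\, e_{(k,\varepsilon)} \cdot e_{(k+1,\varepsilon)} \cdot e_{(k,\varepsilon)},
\]
which exhibits $e_{(k,\varepsilon)}$ as an element of $P_{(k+1,\varepsilon)} \cdot e_{(k+1,\varepsilon)} \cdot P_{(k+1,\varepsilon)} = I_{(k+1,\varepsilon)}$. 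Consequently each summand $x_i \cdot e_{(k,\varepsilon)} \cdot y_i$ lies in $I_{(k+1,\varepsilon)}$, and hence so does $1_{P_{(k+2,\varepsilon)}}$, finishing the argument. The only subtle point is making sure the inclusion $P_{(k,\varepsilon)} \hookrightarrow P_{(k+1,\varepsilon)} \hookrightarrow P_{(k+2,\varepsilon)}$ really is unital and that one may freely pass between the two pictures of the unit; this is immediate from the inclusion tangle but should be flagged. Nothing beyond modulus $\delta \neq 0$ (already assumed when defining $e_{(k,\varepsilon)}$) is needed.
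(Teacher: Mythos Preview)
Your argument is correct and is essentially the same as the paper's: both reduce to showing $1 \in I_{(k+1,\varepsilon)}$ by writing $1$ as a sum of terms $x\,e_{(k,\varepsilon)}\,y$ and then invoking the Temperley--Lieb relation $E_{(k,\varepsilon)}E_{(k+1,\varepsilon)}E_{(k,\varepsilon)}=E_{(k,\varepsilon)}$ to absorb these into $P_{(k+1,\varepsilon)}\,e_{(k+1,\varepsilon)}\,P_{(k+1,\varepsilon)}$. The paper merely compresses the computation into a single chain of equalities, while you spell out the unitality of the inclusions and the normalization; nothing substantive differs.
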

\begin{proof}
By Lemma \ref{ideal}, $I_{(k+1,\varepsilon )}$ is an ideal in $%
P_{(k+2,\varepsilon )}$. So, it is enough to show $1\in I_{(k+1,\varepsilon
)}$. Now, implies $1\in P_{(k+1,\varepsilon )}=I_{(k,\varepsilon
)}=P_{(k,\varepsilon )}E_{(k,\varepsilon )}P_{(k,\varepsilon
)}=P_{(k,\varepsilon )}E_{(k,\varepsilon )}E_{(k+1,\varepsilon
)}E_{(k,\varepsilon )}P_{(k,\varepsilon )}\subset P_{(k+1,\varepsilon
)}E_{(k,\varepsilon )}P_{(k+1,\varepsilon )}=I_{(k+1,\varepsilon )}$.
\end{proof}
\begin{lemma}\label{S lemma}
If $I_{(k,\varepsilon )}=P_{(k+1,\varepsilon )}$, then
$\left\{\begin{array}{ll}
I_{(k,-\varepsilon )}=P_{(k+1,-\varepsilon )} & \text{if } k
\text{ is even}\\
I_{(k+1,-\varepsilon )}=P_{(k+2,-\varepsilon )} & \text{if } k
\text{ is odd.}
\end{array}\right.$
\end{lemma}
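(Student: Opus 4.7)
The proof splits on the parity of $k$.

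\emph{Case 1 (odd $k$).} Reduce to the even case. The preceding lemma applied to the hypothesis yields $I_{(k+1,\varepsilon)} = P_{(k+2,\varepsilon)}$. Since $k+1$ is now even, invoking Case 2 below with $k$ replaced by $k+1$ produces $I_{(k+1,-\varepsilon)} = P_{(k+2,-\varepsilon)}$, exactly as required.

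\emph{Case 2 (even $k$).} I would introduce the tangle-induced map $\rho\colon P_{(k+1,\varepsilon)} \to P_{(k+1,-\varepsilon)}$ corresponding to a half-rotation (rotation by $\pi$) of the external disc. For $k$ even, $k+1$ is odd, so a half-rotation shifts the $2(k+1)$ marked points by $k+1$ positions, which reverses the shading of the first region; hence $\rho$ indeed lands in $P_{(k+1,-\varepsilon)}$. Since the half-rotation swaps the top and bottom halves of the box, $\rho$ is a unital \emph{anti}-isomorphism of algebras (in particular $\rho(1_{(k+1,\varepsilon)}) = 1_{(k+1,-\varepsilon)}$). A direct tangle computation, using the top–bottom palindromic symmetry of $E_{(k,\varepsilon)}$, identifies $\rho(E_{(k,\varepsilon)})$ with the \emph{leftmost} Jones projection in $P_{(k+1,-\varepsilon)}$, namely the image of $E_{(1,-\varepsilon)}$ under the iterated inclusion tangle that adds through-strings on the right.

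Next, I would show that this leftmost Jones projection lies in $I_{(k,-\varepsilon)}$. Iterating the relation $E_{(j,-\varepsilon)} E_{(j+1,-\varepsilon)} E_{(j,-\varepsilon)} = E_{(j,-\varepsilon)}$ (property (i)) starting from $E_{(1,-\varepsilon)}$ gives, inductively,
\[
E_{(1,-\varepsilon)} = \bigl(E_{(1,-\varepsilon)} E_{(2,-\varepsilon)} \cdots E_{(k-1,-\varepsilon)}\bigr) \cdot E_{(k,-\varepsilon)} \cdot \bigl(E_{(k-1,-\varepsilon)} \cdots E_{(2,-\varepsilon)} E_{(1,-\varepsilon)}\bigr),
\]
in which each parenthesized factor lies in $P_{(k,-\varepsilon)}$ (the relevant Jones projections all sit in $P_{(k,-\varepsilon)} \subseteq P_{(k+1,-\varepsilon)}$). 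Hence $\rho(E_{(k,\varepsilon)}) \in I_{(k,-\varepsilon)}$. Applying $\rho$ to the hypothesis $1_{(k+1,\varepsilon)} = \sum_i x_i \cdot E_{(k,\varepsilon)} \cdot y_i$ with $x_i,y_i \in P_{(k,\varepsilon)}$ yields
\[
1_{(k+1,-\varepsilon)} \;=\; \sum_i \rho(y_i) \cdot \rho(E_{(k,\varepsilon)}) \cdot \rho(x_i),
\]
and because $I_{(k,-\varepsilon)}$ is a two-sided ideal of $P_{(k+1,-\varepsilon)}$ by Lemma \ref{ideal}, the right-hand side lies in $I_{(k,-\varepsilon)}$, finishing the proof.

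The main obstacle is the tangle-level identification $\rho(E_{(k,\varepsilon)}) = $ leftmost Jones projection in $P_{(k+1,-\varepsilon)}$, which must be verified by tracking the images of the cap, the cup, and the through-strings of $E_{(k,\varepsilon)}$ under the half-rotation and checking that the shadings match correctly. It is precisely at this step that the evenness of $k$ is essential: for $k$ odd the half-rotation would not reverse the shading, so $\rho$ would not even send $P_{(k+1,\varepsilon)}$ to $P_{(k+1,-\varepsilon)}$, explaining why the odd case requires the preliminary shift supplied by the preceding lemma.
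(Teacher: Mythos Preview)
Your proof is correct, and the odd case is handled exactly as in the paper (reduce via the preceding lemma to the even case). For the even case, both you and the paper exploit the half-rotation to flip the shading $\varepsilon$, but the execution differs. The paper introduces the tangle $S_{(k,\varepsilon)}$ (with two internal $(k,\varepsilon)$-discs and range exactly $I_{(k,\varepsilon)}$) together with the one-click rotation $R_k$, and observes a single isotopy
\[
(R_{k+1})^{k+1} \circ S_{(k,\varepsilon)} \circ \bigl((R_k)^{k+1},(R_k)^{k-1}\bigr) \;=\; S_{(k,-\varepsilon)}.
\]
Since the rotation maps are invertible, this immediately gives $I_{(k,-\varepsilon)}=\mathrm{Range}\,P(S_{(k,-\varepsilon)})=P_{(k+1,-\varepsilon)}$. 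In other words, the paper rotates \emph{both} the external and the internal discs simultaneously, so that $S_{(k,\varepsilon)}$ is carried directly to $S_{(k,-\varepsilon)}$ and no further work is needed. You rotate only the external disc, which forces you to track where $E_{(k,\varepsilon)}$ lands (the leftmost Jones projection) and then invoke the Temperley--Lieb relation $E_j E_{j+1} E_j = E_j$ iteratively to pull it back into $I_{(k,-\varepsilon)}$. Your route is more algebraic and slightly longer, but it has the minor advantage of making explicit why the rotated projection still lies in the ideal; the paper's route is a one-line tangle identity.
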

\begin{proof}
Consider the tangles $S_{(k,\varepsilon )} =$
\includegraphics[scale=0.16,bb= 0 250 700 550]{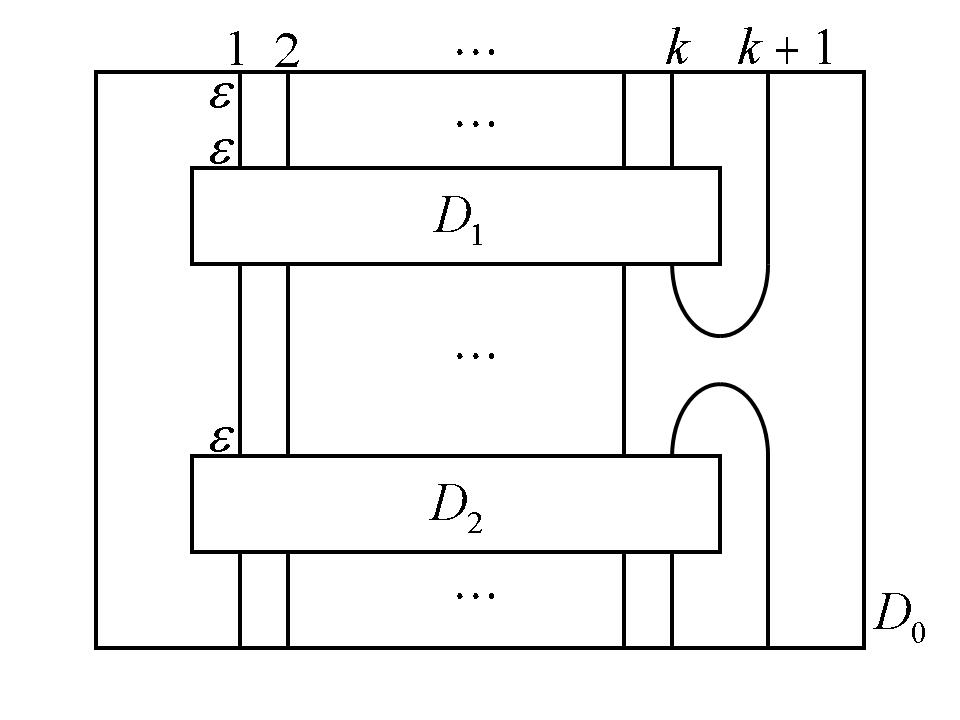}
$\in \mathcal{T} \left( (k,\varepsilon) , (k,\varepsilon) ; (k+1,\varepsilon)
\right)$
\vskip 1.2cm
\noindent and $R_k =$
\includegraphics[scale=0.16,bb= 0 250 700 550]{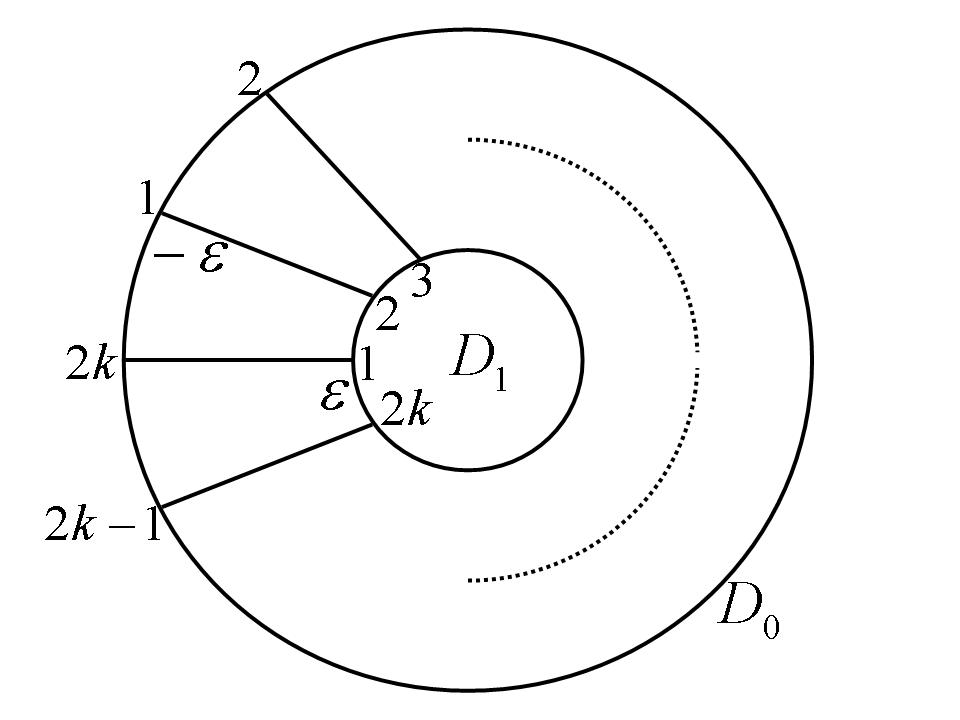}
$\in \mathcal{T} \left( (k,\varepsilon) ; (k, -\varepsilon) \right)$
\vskip 1.28cm
\noindent where $R_k$ of course depends on the $\varepsilon$ which will be
automatically determined from the context.
Note that $I_{(k,\varepsilon )}=P_{(k+1,\varepsilon )}$ if and only
if $P_{(k+1,\varepsilon )}=Range\left( P\left( S_{(k,\varepsilon )}\right)
\right) $ ($=$ span of the image of $P\left( S_{(k,\varepsilon )}\right) $).

We first consider the case $k$ being even. Then, isotopically $\left(
R_{k+1}\right) ^{k+1}\circ S_{(k,\varepsilon )}\circ \left(
\left( R_{k}\right) ^{k+1},\left( R_{k}\right)
^{k-1}\right) =S_{(k,-\varepsilon )}$. Hitting both sides with $P$ and using
the invertibility of the rotation tangles, we get the desired equality.

If $k$ is odd, then by Lemma \ref{ideal}, $I_{(k+1,\varepsilon
)}=P_{(k+2,\varepsilon )}$ where $k+1$ is even and thus we are through by
the first case.
\end{proof}
\begin{remark}
Note that in the above three lemmas, the modulus of $P$ is not used at all.
\end{remark}
\begin{definition}
A planar algebra $P$ is said to have finite depth if $I_{(l,\varepsilon
)}=P_{(l+1,\varepsilon )}$ for some $l\in \mathbb{N}$, $\varepsilon \in
\left\{ +,-\right\} $ and in that case, the depth of $P$ will be a pair of
natural numbers $\left( l_{+},l_{-}\right) $ such that $l_{\varepsilon }$ is
the smallest natural number such that $I_{(l_{\varepsilon },\varepsilon
)}=P_{(l_{\varepsilon }+1,\varepsilon )}$.
\end{definition}
\begin{remark}
From Lemma \ref{S lemma}, one can deduce that if $\left( l_{+},l_{-}\right)$
denotes the depth of $P$ and $l_{\varepsilon }$
is even (resp. odd), then $l_{-\varepsilon }\in \left\{ l_{\varepsilon
}-1,l_{\varepsilon }\right\} $ (resp. $l_{-\varepsilon }\in \left\{
l_{\varepsilon },l_{\varepsilon }+1\right\} $), that is, either both $l_{+}$
and $l_{-}$ are same or they are consecutive natural numbers with the larger
one being even.
\end{remark}
Let $S_{\left( k,\varepsilon \right) }^{(m)}$ denote the tangle
\includegraphics[scale=0.27,bb= 0 250 700 550]{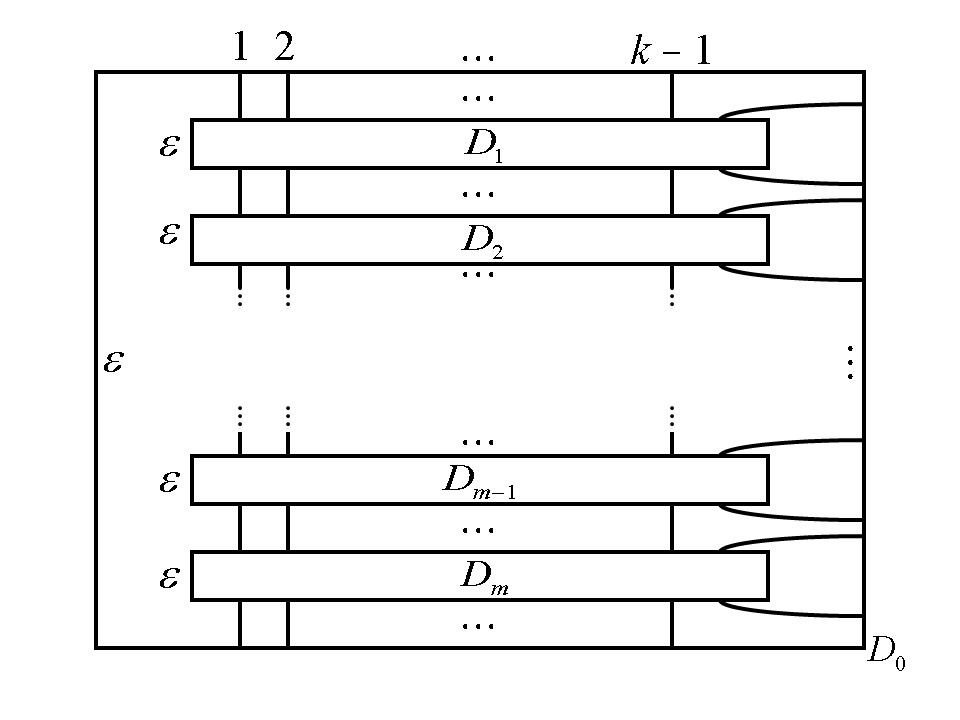}
$\in \mathcal{T}_{(k+m-1,
\varepsilon )}$.
\vskip 2.16cm
Note that $S_{\left( k,\varepsilon \right) }^{(1)}=1_{\left(
k,\varepsilon \right) }$ and $S_{\left( k,\varepsilon \right)
}^{(2)}=S_{\left( k,\varepsilon \right) }$ (defined in the proof of the Lemma
\ref{S lemma}).
\begin{lemma}
\label{general S lemma}If $P$ has finite depth with depth $\left(
l_{+},l_{-}\right) $, then
\begin{equation*}
Range\left\{ P\left( S_{\left( k,\varepsilon \right) }^{(m)}\right) \right\}
=P\left( k+m-1,\varepsilon \right)
\end{equation*}
whenever $k\geq l_{\varepsilon }=$ the $\varepsilon $-depth of $P$ and $m\in
\mathbb{N}$.
\end{lemma}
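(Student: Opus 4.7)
The plan is to induct on $m$. First, I would record a preliminary observation: the hypothesis $k \geq l_\varepsilon$ combined with the lemma showing $I_{(k,\varepsilon)} = P_{(k+1,\varepsilon)} \Rightarrow I_{(k+1,\varepsilon)} = P_{(k+2,\varepsilon)}$, iterated, yields $I_{(k',\varepsilon)} = P_{(k'+1,\varepsilon)}$ for every $k' \geq l_\varepsilon$. By the characterisation of $I_{(k',\varepsilon)}$ via $S_{(k',\varepsilon)}$ invoked at the start of the proof of Lemma \ref{S lemma}, this is equivalent to $\mathrm{Range}(P(S_{(k',\varepsilon)})) = P(k'+1,\varepsilon)$ for every such $k'$.

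For the base case $m=1$, $S^{(1)}_{(k,\varepsilon)} = 1_{(k,\varepsilon)}$ is the identity tangle, so its image is all of $P(k,\varepsilon)$. For the inductive step from $m$ to $m+1$, with $k \geq l_\varepsilon$ fixed, the key is a pictorial factorisation of $S^{(m+1)}_{(k,\varepsilon)}$, up to isotopy, as $S_{(k+m-1,\varepsilon)}$ with (one or both of) its two $(k+m-1,\varepsilon)$-coloured internal discs filled by $S^{(m)}_{(k,\varepsilon)}$; this should be visible directly from the figure defining $S^{(m)}_{(k,\varepsilon)}$ as an iterated stacking of $S$-blocks. Applying $P$ and using functoriality of $P$ as a map of multicategories, one gets
\[ \mathrm{Range}\bigl(P(S^{(m+1)}_{(k,\varepsilon)})\bigr) \supseteq P(S_{(k+m-1,\varepsilon)})\bigl(\mathrm{Range}(P(S^{(m)}_{(k,\varepsilon)})),\, \mathrm{Range}(P(S^{(m)}_{(k,\varepsilon)}))\bigr). \]
By the inductive hypothesis the two inner ranges each equal $P(k+m-1,\varepsilon)$, and by the preliminary observation applied at $k' = k+m-1 \geq l_\varepsilon$ the outer range equals $P(k+m,\varepsilon)$. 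The opposite containment is automatic, so the induction closes.

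The main obstacle is the pictorial factorisation of $S^{(m+1)}_{(k,\varepsilon)}$ in terms of $S_{(k+m-1,\varepsilon)}$ and $S^{(m)}_{(k,\varepsilon)}$: this is really the combinatorial content of the lemma and requires one to stare at the figure defining $S^{(m)}_{(k,\varepsilon)}$ and verify that the top $S$-block peels off cleanly, leaving an $S^{(m)}$-tangle in each of its two slots. Once that identity of tangles is recorded, the remainder is a formal bookkeeping argument combining the inductive hypothesis with the ideal-stabilisation observation from the first paragraph.
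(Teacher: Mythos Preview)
Your inductive framework and the role of the composite $S_{(k+m-1,\varepsilon)}\circ(S^{(m)}_{(k,\varepsilon)},S^{(m)}_{(k,\varepsilon)})$ are exactly what the paper uses. The gap is in the ``pictorial factorisation'' you invoke: you claim that $S^{(m+1)}_{(k,\varepsilon)}$ is, up to isotopy, $S_{(k+m-1,\varepsilon)}$ with its two input discs filled by copies of $S^{(m)}_{(k,\varepsilon)}$. This cannot hold as an identity of tangles, because the two sides have different numbers of internal discs: $S^{(m+1)}_{(k,\varepsilon)}$ has $m+1$ discs of colour $(k,\varepsilon)$, while $S_{(k+m-1,\varepsilon)}\circ(S^{(m)}_{(k,\varepsilon)},S^{(m)}_{(k,\varepsilon)})$ has $2m$. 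For $m\geq 2$ these disagree, so no isotopy can identify them, and the displayed containment does not follow from functoriality as you suggest.

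What the paper actually does is factor in the \emph{opposite} direction: one checks pictorially (with a parity-dependent isotopy) that
\[
S^{(2)}_{(k+n-1,\varepsilon)}\circ\bigl(S^{(n)}_{(k,\varepsilon)},S^{(n)}_{(k,\varepsilon)}\bigr)
= S^{(n+1)}_{(k,\varepsilon)}\circ\bigl(1_{(k,\varepsilon)},\ldots,T,\ldots,1_{(k,\varepsilon)}\bigr)
\]
for a certain $(k,\varepsilon)$-tangle $T$ that absorbs the $n$ ``extra'' internal discs. This identity gives
\[
\mathrm{Range}\!\left[P\bigl(S^{(2)}_{(k+n-1,\varepsilon)}\circ(S^{(n)},S^{(n)})\bigr)\right]\subseteq \mathrm{Range}\bigl[P(S^{(n+1)}_{(k,\varepsilon)})\bigr],
\]
and the left-hand side equals $P(k+n,\varepsilon)$ by the inductive hypothesis together with your preliminary observation at level $k+n-1$. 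So your target containment is true, but it comes from writing the big composite as $S^{(n+1)}$ post-composed with something, not from writing $S^{(n+1)}$ as the big composite. Once you reverse the direction of the factorisation and identify the auxiliary tangle $T$, the rest of your argument goes through verbatim.
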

\begin{proof}
The case $m=1$ is trivial and $m=2$ follows from the proof of Lemma \ref{S
lemma}.

Suppose the statement of the lemma is true for all $m\leq n$. To show the
same for $m=\left( n+1\right) $, we consider the tangle $S_{\left(
k+n-1,\varepsilon \right) }^{(2)}\circ \left( S_{\left( k,\varepsilon \right)
}^{(n)},S_{\left( k,\varepsilon \right) }^{(n)}\right) $. Clearly,
\begin{equation*}
Range\left( P\left( S_{\left( k+n-1,\varepsilon \right) }^{(2)}\circ \left(
S_{\left( k,\varepsilon \right) }^{(n)},S_{\left( k,\varepsilon \right)
}^{(n)}\right) \right) \right) =P\left( k+n,\varepsilon \right)
\end{equation*}
since $k\geq l_{\varepsilon }$. Again, for even $n$ ($=2p$ say), the tangle $%
S_{\left( k+n-1,\varepsilon \right) }^{(2)}\circ \left( S_{\left(
k,\varepsilon \right) }^{(n)},S_{\left( k,\varepsilon \right) }^{(n)}\right) $
isotopically looks like:
\begin{center}
\includegraphics[scale=0.47,bb= 0 0 700 550]{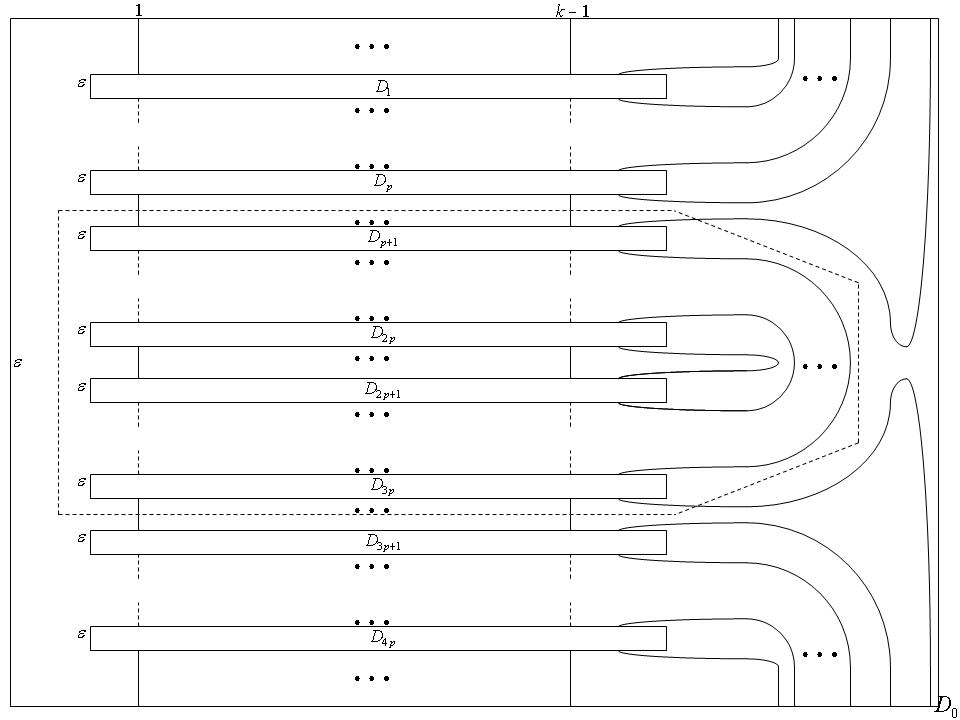}
\end{center}
Note
that the tangle bounded by the dotted line, denoted by $T$, is a
$(k,\varepsilon )$-planar tangle and thus $S_{\left( k+n-1,\varepsilon
\right) }^{(2)}\circ \left( S_{\left( k,\varepsilon \right) }^{(n)},S_{\left(
k,\varepsilon \right) }^{(n)}\right) =S_{\left( k,\varepsilon \right)
}^{(n+1)}\circ \left( 1_{\left( k,\varepsilon \right) },\cdots ,1_{\left(
k,\varepsilon \right) },T,1_{\left( k,\varepsilon \right) },\cdots
,1_{\left( k,\varepsilon \right) }\right) $ where $T$ sits in the $\left(
p+1\right) ^{\text{th}}$ position; this implies
\begin{equation*}
P\left( k+n,\varepsilon \right) =Range\left[ P\left( S_{\left(
k+n-1,\varepsilon \right) }^{(2)}\circ \left( S_{\left( k,\varepsilon \right)
}^{(n)},S_{\left( k,\varepsilon \right) }^{(n)}\right) \right) \right] \subset
Range\left[ P\left( S_{\left( k,\varepsilon \right) }^{(n+1)}\right) \right]
\subset P\left( k+n,\varepsilon \right) \text{.}
\end{equation*}
Hence,
$Range\left( P\left( S_{\left( k,\varepsilon \right) }^{(n+1)}\right) \right)
=Range\left( P\left( S_{\left( k+n-1,\varepsilon \right) }^{(2)}\circ \left(
S_{\left( k,\varepsilon \right) }^{(n)},S_{\left( k,\varepsilon \right)
}^{(n)}\right) \right) \right) =P\left( k+n,\varepsilon \right) $. Similar
arguments can be used to prove the same for odd $n$.
\end{proof}
\begin{proposition}
\label{affine rank}If $P$ is a finite depth planar algebra with $\left(
l_{+},l_{-}\right) $ as its depth, then
\begin{equation*}
\left( \mathcal{A}ffP\right) _{(q,\eta )}^{\left( p,\varepsilon \right)
}=span\left\{ \left( \mathcal{A}ffP\right) _{(s,\nu )}^{\left( p,\varepsilon
\right) }\circ \left( \mathcal{A}ffP\right) _{(q,\eta )}^{\left( s,\nu
\right) }\right\}
\end{equation*}
\comment{
\begin{equation*}
\left( \mathcal{A}ffP\right) _{(q,\eta )}^{\left( p,\varepsilon \right)
}=span\left\{ \left( \mathcal{A}ffP\right) _{(s,\nu )}^{\left( p,\varepsilon
\right) }\circ \left( \mathcal{A}ffP\right) _{(q,\eta )}^{\left( s,\nu
\right) }\right\}
\end{equation*}
}
for all colours $\left( p,\varepsilon \right) $,
$\left( q,\eta \right) $ and $
\nu \in \{+,-\}$ where
$s = \left[ \frac{1}{2}\min \left\{ l_{+},l_{-}\right\}\right] $. (
$\left[ \cdot \right] $ denotes the greatest integer
function.)
\end{proposition}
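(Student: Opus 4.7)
The plan is to represent an arbitrary $A \in (\mathcal{A}ffP)_{(q,\eta)}^{(p,\varepsilon)}$ as $\psi^l_{(p,\varepsilon),(q,\eta)}(T)$ for some $l \in \mathbb{N}_0$ and some labelled tangle $T \in \mathcal{P}_{(p+q+l,\varepsilon)}(P)$, and then replace $T$ modulo $\mathcal{W}$ by a labelled version of $S^{(m)}_{(k,\varepsilon)}$ with $k$ governed by the depth. By the wiggling argument already used to show that $\mathcal{W}_{(q,\eta)}^{(p,\varepsilon)}$ is a subspace, the integer $l$ in such a presentation can be freely enlarged; in particular, we may ensure $p+q+l \geq l_\varepsilon$ and set $k := l_\varepsilon$, $m := p+q+l-k+1 \geq 1$.

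Lemma \ref{general S lemma} then produces $x_1, \ldots, x_m \in P(k,\varepsilon)$ with $P(T) = P\bigl(S^{(m)}_{(k,\varepsilon)}\bigr)(x_1, \ldots, x_m)$. Let $T'$ be the labelled tangle obtained from $S^{(m)}_{(k,\varepsilon)}$ by assigning $x_i$ to its $i^{\text{th}}$ internal disc. Then $P(T-T')=0$, so $\psi^{l}_{(p,\varepsilon),(q,\eta)}(T-T')$ lies in $\mathcal{W}_{(q,\eta)}^{(p,\varepsilon)}$, giving
\[
A \;=\; \Psi^{l}_{(p,\varepsilon),(q,\eta)} \circ T' \;=\; \Psi^{l}_{(p,\varepsilon),(q,\eta)} \circ S^{(m)}_{(k,\varepsilon)}\bigl(x_1, \ldots, x_m\bigr)
\]
in $(\mathcal{A}ffP)_{(q,\eta)}^{(p,\varepsilon)}$.

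The third step is to slice this annular picture by a concentric simple closed curve. In $S^{(m)}_{(k,\varepsilon)}$ the $m$ boxes of colour $(k,\varepsilon)$ form a linear chain glued by the Jones-projection-like pattern that falls out of the recursion $S^{(n+1)}_{(k,\varepsilon)} = S^{(2)}_{(k+n-1,\varepsilon)} \circ \bigl(S^{(n)}_{(k,\varepsilon)},S^{(n)}_{(k,\varepsilon)}\bigr)$, and between any two consecutive boxes only a controlled number of strands (depending solely on $k = l_\varepsilon$) cross a transverse radial segment. After the annular embedding by $\Psi^{l}_{(p,\varepsilon),(q,\eta)}$, such a segment extends to a concentric separating curve whose intersection count with the strands must be even, hence is at most $2[l_\varepsilon/2]$. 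This cuts $A$ as a composition $B \circ C$ through some intermediate colour $(s', \nu')$ with $s' \leq [l_\varepsilon/2]$.

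To upgrade to the uniform value $s = [\min\{l_+,l_-\}/2]$ and arbitrary $\nu \in \{+,-\}$, I would invoke the sign-switching tool used in the proof of Lemma \ref{S lemma}: composing the presentation of $T$ with the rotation tangle $R$ lets one always work with $\varepsilon$ realising $l_\varepsilon = \min\{l_+, l_-\}$, and any slack $s-s'$ can be absorbed by precomposing with the inclusion tangle provided by Lemma \ref{aff rep basic lemma}(a). Either sign of $\nu$ on the intermediate boundary is then available through the affine rotation tangle $R_{(s,\nu)}$ of Lemma \ref{aff rep basic lemma}(b), whose $F$-image gives a mutual isomorphism between factorizations through $(s,+)$ and $(s,-)$. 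The main technical obstacle I anticipate is the bottleneck count inside $S^{(m)}_{(k,\varepsilon)}$: verifying geometrically that between every pair of consecutive internal discs a radial arc really does meet at most $2[k/2]$ strands, and that this arc extends to a concentric simple closed curve which cleanly separates the annular picture into the two affine-tangle pieces with the prescribed intermediate colour.
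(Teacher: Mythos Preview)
Your overall strategy coincides with the paper's: represent $A$ via $\psi^{l}$, use Lemma~\ref{general S lemma} to rewrite the label as a sum of labelled $S^{(m)}_{(k,\varepsilon)}$'s with $k$ equal to the depth, and then cut the annulus along a concentric curve. The reductions you sketch in the last paragraph (rotation to force $\varepsilon$ to realise $\min\{l_+,l_-\}$, inclusion to adjust $s'$ up to $s$, rotation $R_{(s,\nu)}$ to swap the sign of the intermediate colour) are exactly the ones the paper uses.

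The genuine gap is precisely the point you flag as the ``main technical obstacle,'' and your proposed resolution of it does not work as stated. A concentric separating curve in the annulus must encircle the inner boundary, so in the picture $\Psi^{l}_{(p,\varepsilon),(q,\eta)}\circ S^{(m)}_{(k,\varepsilon)}$ any such curve necessarily crosses the $l$ strands that $\Psi^{l}$ sends around the inner disc, \emph{in addition to} whatever strands it meets inside the region occupied by $S^{(m)}$. Since you chose $l$ large (you required $p+q+l\geq l_\varepsilon$, and wiggling only enlarges $l$), this intersection count is not bounded by $2\lfloor l_\varepsilon/2\rfloor$. The parity observation alone cannot bring it down. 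In other words, a radial segment between two consecutive internal boxes of $S^{(m)}$ does \emph{not} extend to a concentric curve of the same intersection number; the extension picks up all the strands going around the back of the annulus.

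The paper's proof supplies exactly the missing move: after writing $A$ as a linear combination of $\psi^{r}_{(p,+),(q,+)}(S^{(n)}_{(l,+)})$ (with $l=\min\{l_+,l_-\}$), one isotopes the internal boxes of $S^{(n)}$ that lie on one side of the inner disc \emph{around} the inner disc to the other side. Each box carried around changes the number of strands encircling the inner boundary, and the move terminates with the picture rewritten as $\psi^{2s}_{(p,+),(q,+)}(T')$ for a suitable labelled tangle $T'$. Now the concentric cut crosses only those $2s$ strands, giving $A\in\mathrm{span}\{(\mathcal{A}ffP)^{(p,+)}_{(s,+)}\circ(\mathcal{A}ffP)^{(s,+)}_{(q,+)}\}$. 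The case distinction ($l$ odd, $l-1=2s$; $l$ even, $l=2s$) records how the final count comes out to $2s$ after the boxes have been shuttled around. This isotopy-around-the-hole step is the heart of the argument and is what your sketch is missing.
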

\begin{proof}
If either of $p$ and $q$ is less than or equal to $s$, then the equality can
easily be established by wiggling a string sufficiently and then decomposing
the affine tangle. One can also assume $\varepsilon =\eta $ because the case
when they are different can be deduced using rotation tangles. Without loss
of generality, let $l=l_{+}\leq l_{-}$, $p,q\geq s+1$ and $\eta =\varepsilon
=+$. Let $A\in \left( \mathcal{A}ffP\right) _{(q,+)}^{\left( p,+\right) }$.
Then, $A$ can be expressed as the equivalence class of the affine tangle $%
\Psi _{(p,+),(q,+)}^{r}$ such that the internal rectangle is labelled with
an element of $P{\left( p+q+r,+\right) }$ where $r$ can be chosen to exceed
$l$ (using wiggling around the inner disc). By Lemma \ref{general S lemma},
$A$ is a linear combination (l.c.) of equivalence class (eq. cl.) of labelled
$\psi _{(p,+),(q,+)}^{r}\left( S_{\left( k,+\right) }^{n}\right)\\
= \psi_{(p,+),(q,+)}^{r}\left(
\includegraphics[scale=0.27,bb= 0 250 700 550]{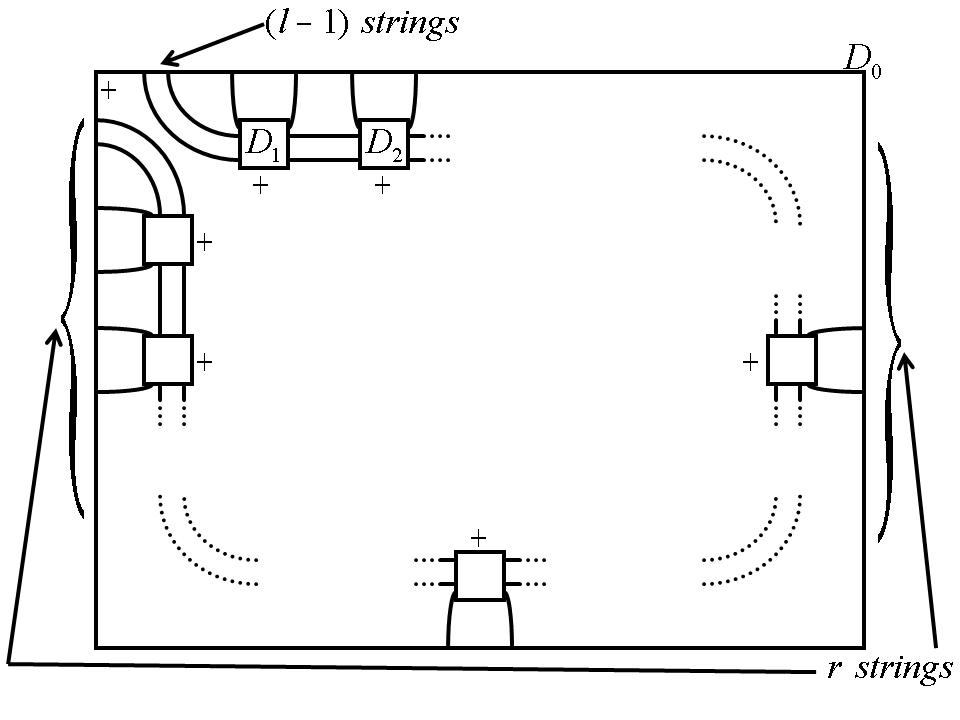}
\right)$. Now, we consider two cases.\\
\textbf{Case 1}: $l$ is odd, that is, $l-1=2s$. We can isotopically move the
internal rectangles attached to left side of the above tangle around the
inner disc and bring them to the to the right side. In this way, we express $%
A$ as:

l.c. of eq. cl. of $\psi _{(p,+),(q,+)}^{2s}\left(
\includegraphics[scale=0.27,bb= 0 250 700 550]{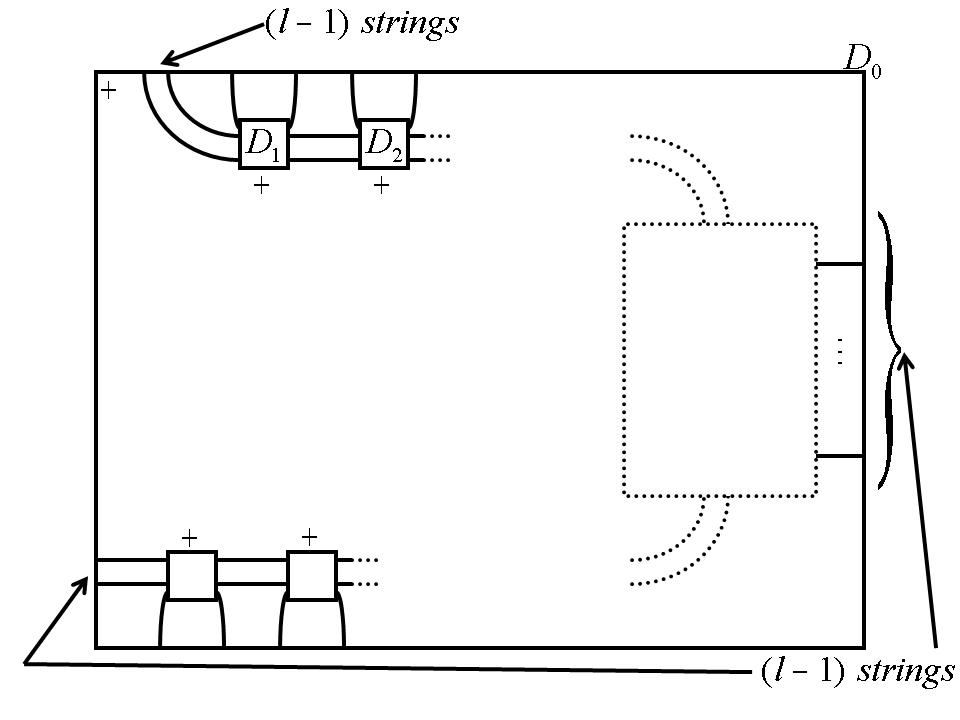}
\right)$\\
$=$ l.c. of eq. cl. of $\psi _{(p,+),(q,+)}^{2s}\left(
\includegraphics[scale=0.20,bb= 0 250 700 550]{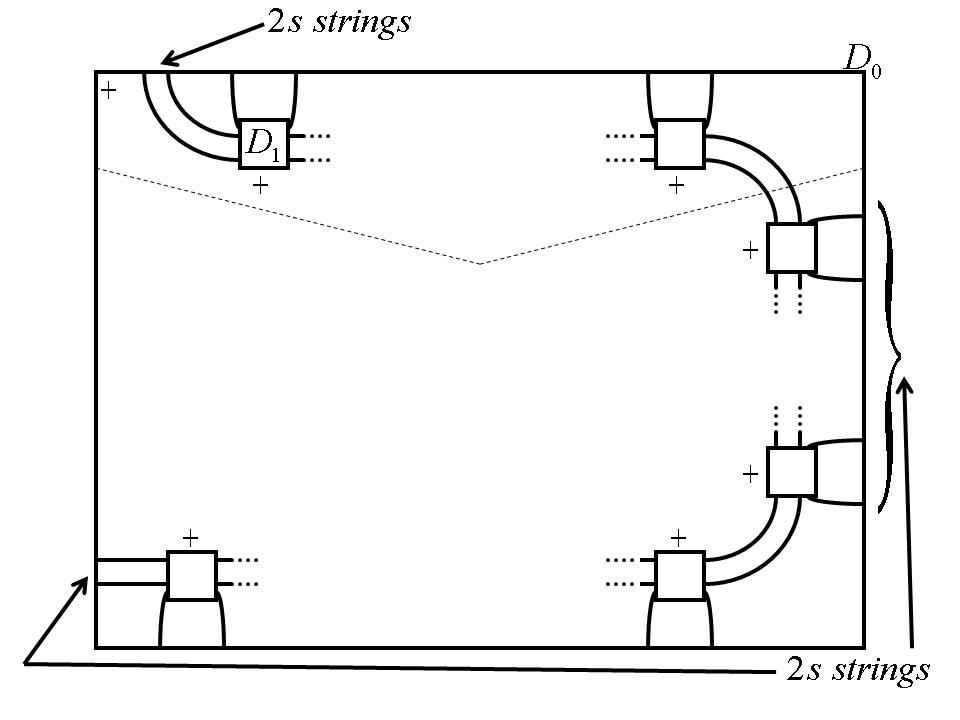}
\right) $. Identifying the
two vertical sides of the last tangle we get an affine tangle which we cut
along the dotted line; this cutting induces a decomposition of $A$. Note
that the dotted line intersects exactly $2s$ strings. Thus $A\in span\left\{
\left( \mathcal{A}ffP\right) _{(s,+)}^{\left( p,+\right) }\circ \left(
\mathcal{A}ffP\right) _{(q,+)}^{\left( s,+\right) }\right\} $.\\
\textbf{Case 2}: $l$ is even, that is, $l=2s$. Using similar arguments as in
Case 1, we
can conclude that $A$ is a l.c. of eq. cl. of
$\psi _{(p,+),(q,+)}^{2s}\left(
\includegraphics[scale=0.27,bb= 0 250 700 550]{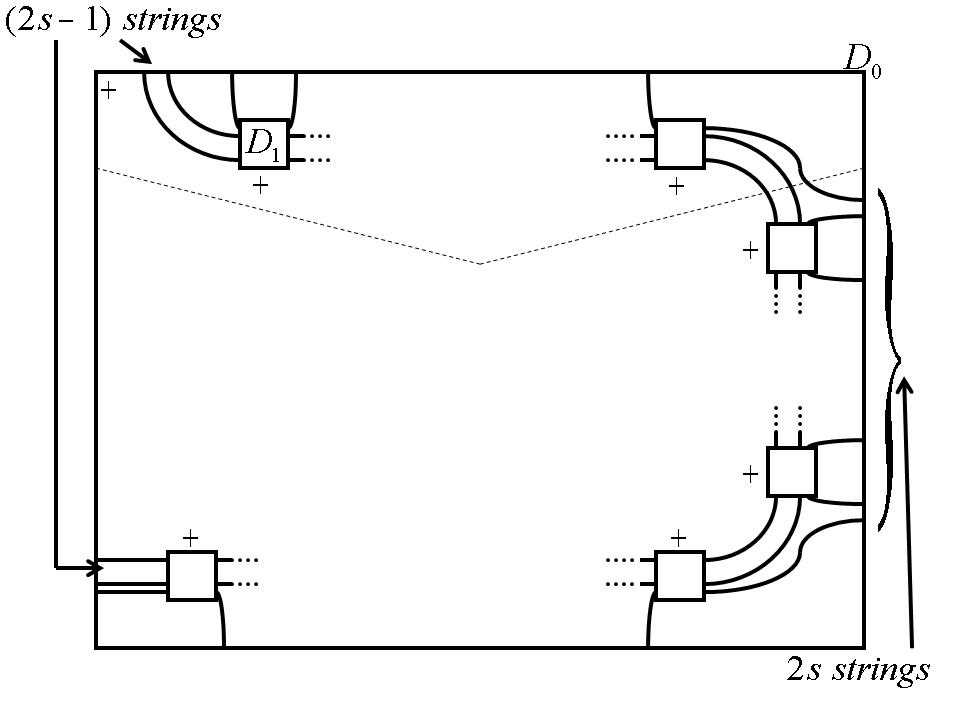}
\right) $. Cutting the tangle along the dotted line just like in Case 1, we can
decompose $A$ and get $A\in span\left\{ \left(
\mathcal{A}ffP\right) _{(s,+)}^{\left( p,+\right) }\circ \left(
\mathcal{A}ffP\right)
_{(q,+)}^{\left( s,+\right) }\right\} $.
\end{proof}
\begin{corollary}
\label{affine ideal}If $P$ is a finite depth planar algebra with $\left(
l_{+},l_{-}\right) $ being its depth, then $\left( \widehat{\mathcal{A}ffP}%
\right) _{(k,\varepsilon )}=\left( \mathcal{A}ffP\right) _{(k,\varepsilon
)}^{(k,\varepsilon )}$ for all colours $\left( k,\varepsilon \right) $ such
that $k>s=\left[ \frac{1}{2}\min \left\{ l_{+},l_{-}\right\} \right] $.
\end{corollary}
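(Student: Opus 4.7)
The plan is to deduce this corollary directly from Proposition \ref{affine rank}, which is already the substantive result; the corollary is essentially just a reinterpretation of that proposition in the special case where the source and target objects coincide.

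First I would apply Proposition \ref{affine rank} with $p = q = k$ and $\eta = \varepsilon$. This gives
\[
(\mathcal{A}ffP)_{(k,\varepsilon)}^{(k,\varepsilon)} = \mathrm{span}\left\{ (\mathcal{A}ffP)_{(s,\nu)}^{(k,\varepsilon)} \circ (\mathcal{A}ffP)_{(k,\varepsilon)}^{(s,\nu)} \;:\; \nu \in \{+,-\} \right\},
\]
where $s = \left[\tfrac{1}{2}\min\{l_+, l_-\}\right]$. Next I would invoke the hypothesis $k > s$: every generating element on the right-hand side is a composition $B \circ C$ that factors through the intermediate color $(s,\nu)$ with $s < k$. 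This is exactly the defining condition for membership in $(\widehat{\mathcal{A}ffP})_{(k,\varepsilon)}$, so the right-hand side lies in $(\widehat{\mathcal{A}ffP})_{(k,\varepsilon)}$.

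The reverse inclusion $(\widehat{\mathcal{A}ffP})_{(k,\varepsilon)} \subseteq (\mathcal{A}ffP)_{(k,\varepsilon)}^{(k,\varepsilon)}$ is immediate from the definition of $(\widehat{\mathcal{A}ffP})_{(k,\varepsilon)}$ as a subspace of $(\mathcal{A}ffP)_{(k,\varepsilon)}^{(k,\varepsilon)}$. Combining the two inclusions yields equality.

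There is no genuine obstacle here since all the work has already been done in Proposition \ref{affine rank}; the only thing to check is that the bound $s < k$ supplied by the hypothesis matches the strict inequality $n < k$ appearing in the definition of $(\widehat{\mathcal{A}ffP})_{(k,\varepsilon)}$, which is automatic. If anything required care, it would be ensuring that the two colors appearing in $\min\{l_+, l_-\}$ do not produce an off-by-one issue relative to the floor function, but since we only need $s < k$ (not a tight bound), the hypothesis $k > s$ is exactly what is required.
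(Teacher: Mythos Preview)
Your proof is correct and follows exactly the same approach as the paper, which simply states that the result ``follows immediately from the Proposition and definition of $\left( \widehat{\mathcal{A}ffP}\right) _{(k,\varepsilon )}$.'' You have merely spelled out the details of that immediate deduction.
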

\begin{proof}
Follows immediately from the Proposition and definition of $\left( \widehat{%
\mathcal{A}ffP}\right) _{(k,\varepsilon )}$.
\end{proof}
\begin{theorem}
If $P$ is a finite depth subfactor-planar algebra with $\left(
l_{+},l_{-}\right) $ as its depth, then the affine $\ast $-representations
of $P$ can have weight atmost $s=\left[ \frac{1}{2}\min \left\{
l_{+},l_{-}\right\} \right] $.
\end{theorem}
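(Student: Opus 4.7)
The plan is to prove the theorem by contradiction, combining Corollary \ref{affine ideal} with property (v) of affine $*$-representations listed earlier in the section.

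Suppose, for contradiction, that $F$ is an affine $*$-representation with weight $k > s$. By the definition of weight, $F(k,\varepsilon)$ is non-zero. On the other hand, property (v) specializes at the weight to give
\begin{equation*}
F(k,\varepsilon) = \bigcap_{A \in (\widehat{\mathcal{A}ffP})_{(k,\varepsilon)}} \ker(F(A)),
\end{equation*}
because the span $W_{(k,\varepsilon)}$ of images $F(A)(v)$ coming from strictly lower colours must be zero at the weight.

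Now I invoke Corollary \ref{affine ideal}: since $k > s$, the ideal $(\widehat{\mathcal{A}ffP})_{(k,\varepsilon)}$ coincides with the entire endomorphism algebra $(\mathcal{A}ffP)_{(k,\varepsilon)}^{(k,\varepsilon)}$. In particular, the identity morphism $1_{(k,\varepsilon)}$ of the affine category lies in $(\widehat{\mathcal{A}ffP})_{(k,\varepsilon)}$. Since $F$ is a functor, $F(1_{(k,\varepsilon)}) = \mathrm{id}_{F(k,\varepsilon)}$, whose kernel is zero. Intersecting with this kernel in the displayed formula forces $F(k,\varepsilon) = 0$, contradicting the assumption that $k$ is the weight of $F$.

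Hence no affine $*$-representation of $P$ can have weight exceeding $s$. The only nontrivial input beyond the previously established machinery is Corollary \ref{affine ideal}, so the main work has already been done in Proposition \ref{affine rank}; the present theorem is essentially a short consequence. The only subtle point to double-check is that the identity $1_{(k,\varepsilon)}$ really is expressed as a linear combination of compositions through strictly lower colours when $k > s$, but this is exactly what Corollary \ref{affine ideal} provides, so no further obstacle arises.
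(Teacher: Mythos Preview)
Your argument is correct and uses the same core ingredients as the paper: Corollary \ref{affine ideal} together with the consequence of property (v) that $F(k,\varepsilon)=\bigcap_{A}\ker F(A)$ at the weight. The only difference is cosmetic: the paper phrases the conclusion as $(LWP)_{(k,\varepsilon)}=\{0\}$ for $k>s$, deduces the bound first for irreducibles, and then invokes direct-sum decomposition for the general case, whereas you bypass this split by observing directly that $1_{(k,\varepsilon)}$ lies in the ideal, which handles all affine $\ast$-representations at once and is marginally cleaner.
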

\begin{proof}
Corollary \ref{affine ideal} implies that the lowest weight algebra $%
(LWP)_{(k,\varepsilon )}=\left\{ 0\right\} $ whenever $k>s$. Thus, from the
discussion of finding irreducible affine representations in section $4$, all
irreducible affine representations have weight atmost $s$. To prove the same
for non-irreducible ones, note that taking direct sums never increases the
weight.
\end{proof}
\begin{theorem}
If $P$ is a finite depth subfactor-planar algebra with modulus $\left(
\delta ,\delta \right) $, then every irreducible affine $\ast $%
-representation of $P$ is locally finite and the radius of convergence of
its dimension is at most $\frac{1}{\delta ^{2}}$. Moreover, the number of
irreducibles at each weight is finite.
\end{theorem}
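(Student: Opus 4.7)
The plan is to dispatch the three assertions of the theorem in order: local finiteness, finiteness of the set of irreducibles at each weight, and the upper bound $1/\delta^{2}$ on the radius of convergence. The first two rely on finite-dimensionality of the lowest weight algebras coming from finite depth; the third requires exhibiting enough vectors in $F(n,\varepsilon)$ to force dimension growth at rate $\delta^{2n}$.

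\textbf{Local finiteness and finitely many irreducibles.} By the previous theorem, any irreducible affine $\ast$-representation $F$ has weight $w\leq s$. The plan is to show that $(LWP)_{(w,\varepsilon)}$ is a finite-dimensional $C^{\ast}$-algebra. A class in it is represented by $\psi_{(w,\varepsilon),(w,\varepsilon)}^{r}(T)$ for some labelled tangle $T\in\mathcal{P}_{(2w+r,\varepsilon)}(P)$, modulo the tangles factoring through colours $<w$. Using Proposition \ref{affine rank} (rotated to the case of squares) one pushes all but boundedly many strings around the inner disc across the lower-weight ideal, and the finite depth relation $I_{(l_{\varepsilon},\varepsilon)}=P_{(l_{\varepsilon}+1,\varepsilon)}$ together with local finiteness of $P$ itself then bounds the residue classes both in the winding parameter $r$ and in the labels. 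A finite-dimensional $C^{\ast}$-algebra admits only finitely many isomorphism classes of irreducible $\ast$-modules, giving part (b) for each weight $w\leq s$. For local finiteness, an irreducible $F$ is generated as a functor by its finite-dimensional weight space $F(w,\varepsilon)$, so $F(n,\varepsilon)$ is spanned by vectors $F(A)v$ with $A\in(\mathcal{A}ffP)_{(w,\varepsilon)}^{(n,\varepsilon)}$ and $v$ in this weight space; Proposition \ref{affine rank} again reduces $A$ modulo the kernel of $F$ to a finite set of representatives, bounding $\dim F(n,\varepsilon)$.

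\textbf{Radius of convergence at most $1/\delta^{2}$.} Equivalently, $\limsup_{n}\dim F(n,\varepsilon)^{1/n}\geq \delta^{2}$. Fix a non-zero $v\in F(w,\varepsilon)$. For each Temperley--Lieb diagram $T$ inside the annulus with $2w$ marked points on the inner disc and $2n$ on the outer one, and zero winding around the inner disc, form the affine tangle $A_{T}\in(\mathcal{A}ffP)_{(w,\varepsilon)}^{(n,\varepsilon)}$ and the vector $w_{T}=F(A_{T})v\in F(n,\varepsilon)$. The number of such $T$ grows at rate $\delta^{2n}$ (Catalan-type count against the loop parameter $\delta$). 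The Gram matrix $G=(\langle w_{T},w_{T'}\rangle)_{T,T'}$ is positive semidefinite since $F$ is a $\ast$-representation, and its entries equal $\langle v,F(A_{T'}^{\ast}\circ A_{T})v\rangle$. Each composite $A_{T'}^{\ast}\circ A_{T}$ is an element of $(\mathcal{A}ffP)_{(w,\varepsilon)}^{(w,\varepsilon)}$ whose action on $v$ reduces (via sphericality of $P$ and the Markov property of the subfactor trace) to scalar multiplication by closed-diagram evaluations in the subfactor's Temperley--Lieb subalgebra. The trace of $G$ is bounded below by a positive multiple of $\delta^{2n}\,\lVert v\rVert^{2}$, using faithfulness of the $C^{\ast}$-inner product on $F$ and positivity of the Markov trace. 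Therefore $\mathrm{rank}(G)\geq c\,\delta^{2n}$ for some $c>0$, and since $\dim F(n,\varepsilon)\geq \mathrm{rank}(G)$, the required bound follows.

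\textbf{Main obstacle.} The hardest step is to guarantee that $\mathrm{tr}(G)$ actually grows at the Temperley--Lieb rate $\delta^{2n}$ rather than being damped by collapses inside $F$. This reduces to showing that for a positive proportion of the diagrams $T$ the diagonal entry $\langle v,F(A_{T}^{\ast}\circ A_{T})v\rangle$ stays uniformly bounded below in $n$. The plan is to isolate those $T$ for which $A_{T}^{\ast}\circ A_{T}$ reduces, after capping off, to a scalar multiple of the identity of $(LWP)_{(w,\varepsilon)}$; faithfulness of the Markov trace on finite-depth subfactor planar algebras and positivity of the $\ast$-structure on $F$ then yield the uniform lower bound. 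This closes the argument.
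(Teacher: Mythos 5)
Your treatment of local finiteness and of the finiteness of the number of irreducibles at each weight is essentially the paper's argument: both reduce to finite-dimensionality of $\left(\mathcal{A}ffP\right)_{(k,\varepsilon)}^{(p,\eta)}$ and of the lowest weight algebras, which the paper extracts from Proposition \ref{affine rank} and Lemma \ref{general S lemma} via explicit surjections built from labels in $P(l,+)$ and $P(p,+)$ together with boundedly many winding choices. Those two parts are fine.

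The radius-of-convergence part, however, argues the wrong inequality. The phrase ``at most'' in the statement is a slip of the paper: the abstract, the Question in Section \ref{affrep}, and the final line of the paper's own proof all assert that the radius of convergence is \emph{at least} $\delta^{-2}$, equivalently $\limsup_{p}\left(\dim F(p,\eta)\right)^{1/p}\leq\delta^{2}$. Accordingly the paper proves an \emph{upper} bound $\dim F(p,\eta)\leq (k+l)\,\dim\left(P(l,+)\right)\dim\left(P(p,+)\right)$ and invokes $\lim_{p}\left(\dim P(p,+)\right)^{1/p}=\delta^{2}$ for a finite depth subfactor. You instead set out to prove a \emph{lower} bound $\dim F(n,\varepsilon)\gtrsim\delta^{2n}$; that is a different assertion from the one the paper proves and advertises, and it is not needed to answer Jones's question. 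Moreover, your argument for that lower bound does not close on its own terms: the number of annular Temperley--Lieb diagrams grows like $4^{n}$, not $\delta^{2n}$, and for $\delta<2$ most of them become linearly dependent after applying $F$, so ``the number of such $T$ grows at rate $\delta^{2n}$'' cannot carry the estimate; and even granting $\mathrm{tr}(G)\geq c\,\delta^{2n}\lVert v\rVert^{2}$, one cannot conclude $\mathrm{rank}(G)\geq c\,\delta^{2n}$ without also bounding the largest eigenvalue of $G$, since the correct inequality is $\mathrm{rank}(G)\geq\mathrm{tr}(G)/\lVert G\rVert$. The step you flag as the ``main obstacle'' is therefore not the only missing piece. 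To match the paper, replace this part by the upper-bound argument: use Lemma \ref{general S lemma} and the proof of Proposition \ref{affine rank} to produce a spanning set of $\left(\mathcal{A}ffP\right)_{(k,+)}^{(p,+)}$ of size at most $(k+l)\,\dim\left(P(l,+)\right)\dim\left(P(p,+)\right)$, deduce the corresponding bound on $\dim F(p,\eta)$, and take $p$-th roots.
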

\begin{proof}
Let $F$ be an irreducible affine $\ast $-representation with weight $k$. So,
$F\left( k,\varepsilon \right) $ is an irreducible module of $\left(
LWP\right) _{(k,\varepsilon )}$. Irreducibility of $F$ says that $F$ induces
an surjective linear map from $\left( \mathcal{A}ffP\right) _{(k,\varepsilon
)}^{\left( p,\eta \right) }\otimes F\left( k,\varepsilon \right) $ to $%
F(p,\eta )$. Therefore, we have $\dim \left( F(p,\eta )\right) \leq \dim
\left( \left( \mathcal{A}ffP\right) _{(k,\varepsilon )}^{\left( p,\eta
\right) }\right) \dim \left( F(k,\varepsilon )\right) $. We look back once
again into the two cases in the proof of Proposition \ref{affine rank}. Let $%
l$ and $s$ be as in Proposition\ \ref{affine rank} for the rest of the
proof. A careful observation on the two cases will say that there exists a
surjective linear map from $P\left( l,+\right) ^{\otimes \left( p+k\right) }$
(resp. $P\left( l,+\right) ^{\otimes \left( p+k+1\right) }$) to $\left(
\mathcal{A}ffP\right) _{(k,+)}^{\left( p,+\right) }$ when $l$ is odd (resp.
even). Therefore,
\begin{equation*}
\dim \left( \left( \mathcal{A}ffP\right) _{(k,\varepsilon )}^{\left( p,\eta
\right) }\right) =\dim \left( \left( \mathcal{A}ffP\right) _{(k,+)}^{\left(
p,+\right) }\right) \leq \left( p+k+1\right) ~\dim \left( P\left( l,+\right)
\right) <\infty
\end{equation*}
since $P$ is locally finite. The lowest weight algebras become finite
dimensional and hence there are finitely many irreducibles at each weight.
This also implies $F\left( k,\varepsilon \right) $ has finite dimension.
Thus $F$ is locally finite.

Next, consider the labelled affine tangle obtained by the action of $\psi
_{(p,+),(k,+)}^{l-1}$ (resp. $\psi _{(p,+),(k,+)}^{l}$) on the\linebreak
\newpage \noindent tangle
\includegraphics[scale=0.27,bb= 0 250 700 550]{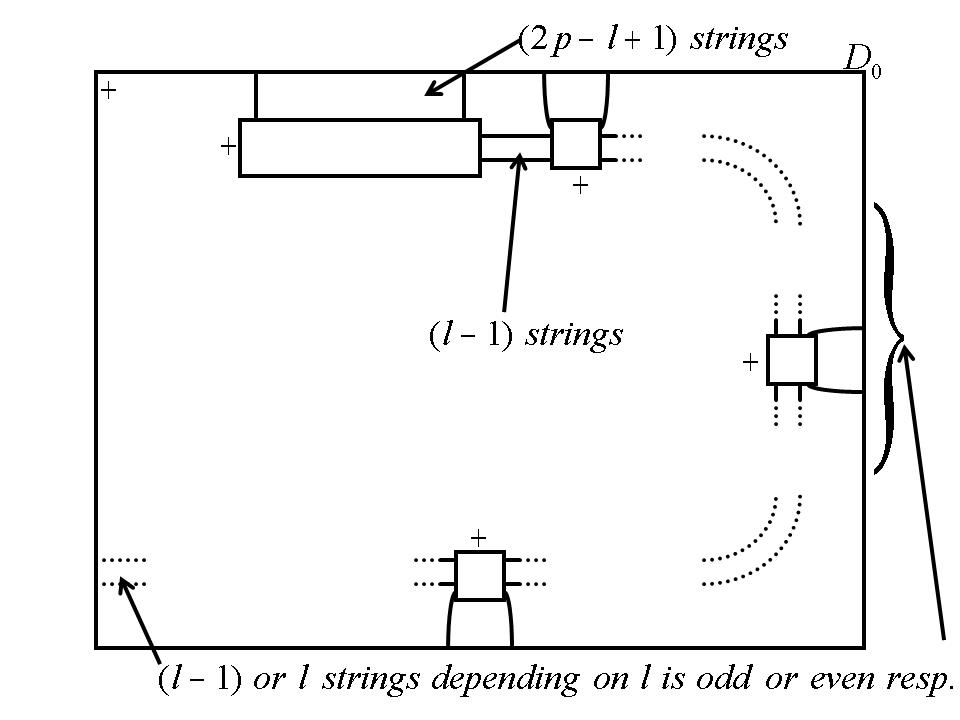}
if $l$ is odd (resp. even).
\vskip 2.16cm
\noindent By Lemma \ref{general S lemma} and proof of Proposition \ref{affine
rank}, eq.
cl. of such labelled tangles generate $\left( \mathcal{A}ffP\right)
_{(k,+)}^{\left( p,+\right) }$. Therefore,
\begin{equation*}
\dim \left( \left( \mathcal{A}ffP\right) _{(k,\varepsilon )}^{\left( p,\eta
\right) }\right) =\dim \left( \left( \mathcal{A}ffP\right) _{(k,+)}^{\left(
p,+\right) }\right) \leq \left( k+l\right) ~\dim \left( P\left( l,+\right)
\right) \dim \left( P\left( p,+\right) \right) \text{.}
\end{equation*}
So, $\dim \left( F(p,\eta )\right) \leq \left( k+l\right) ~\dim \left(
P\left( l,+\right) \right) \dim \left( P\left( p,+\right) \right) $. Now, we
try to find the limit of
\[\left( (k+l)\dim \left( P\left( l,+\right) \right)
\dim \left( P\left( p,+\right) \right) \right) ^{\frac{1}{p}}\]
as $p$ tends to infinity. Note that $(k+l)\dim \left( P\left( l,+\right)
\right) $ is constant. Next, $\lim_{p\longrightarrow \infty }\left( \dim
\left( P\left( p,+\right) \right) \right) ^{\frac{1}{p}}=$ norm of the
principal graph $=$ the index of the finite depth subfactor corresponding to
the planar algebra. By Jones' theorem, index of the subfactor is square of
the modulus. Hence, $\lim \sup_{p\longrightarrow \infty }\left( \dim \left(
F(p,\eta )\right) \right) ^{\frac{1}{p}}\leq \lim \sup_{p\longrightarrow
\infty }\left( \left( k+l\right) ~\dim \left( P\left( l,+\right) \right)
\dim \left( P\left( p,+\right) \right) \right) ^{\frac{1}{p}}=\delta ^{2}$
which implies radius of convergence of $\Phi _{F}^{\eta }$ is at least
$\frac{1}{\delta ^{2}}$. This ends the proof.
\end{proof}
\bibliographystyle{amsplain}

\end{document}